\DeclareMathAlphabet{\mathscr}{OT1}{pzc}{m}{it} 
\numberwithin{equation}{section}
\newtheorem{theorem}{Theorem}[section]
\newtheorem{notation}[theorem]{Notation}
\newtheorem{lemma}[theorem]{Lemma}
\newtheorem{proposition}[theorem]{Proposition}
\newtheorem{corollary}[theorem]{Corollary}
\newtheorem{definition}[theorem]{Definition}
\newtheorem{assumption}[theorem]{Assumption}
\newtheorem{remark}[theorem]{Remark}
\newenvironment{prooff}[1]{\begin{trivlist}
\item {\it \bf Proof}\quad} {\qed\end{trivlist}}
\newsavebox\myboxA
\newsavebox\myboxB
\newlength\mylenA
\newcommand*\xoverline[2][0.75]{%
    \sbox{\myboxA}{$\m@th#2$}%
    \setbox\myboxB\null
    \ht\myboxB=\ht\myboxA%
    \dp\myboxB=\dp\myboxA%
    \wd\myboxB=#1\wd\myboxA
    \sbox\myboxB{$\m@th\overline{\copy\myboxB}$}
    \setlength\mylenA{\the\wd\myboxA}
    \addtolength\mylenA{-\the\wd\myboxB}%
    \ifdim\wd\myboxB<\wd\myboxA%
       \rlap{\hskip 0.5\mylenA\usebox\myboxB}{\usebox\myboxA}%
    \else
        \hskip -0.5\mylenA\rlap{\usebox\myboxA}{\hskip 0.5\mylenA\usebox\myboxB}%
    \fi}
\def \ep{\hbox{ }\hfill{ $\square$   } }
\title{Controlled diffusion Mean Field Games with common noise, and McKean-Vlasov second order backward SDEs}
\author{
Adrien BARRASSO \thanks{Ecole Polytechnique, CMAP boulevard des Mar\'echaux, F-91120 Palaiseau, France.
E-mail: \sf adrien.barrasso@polytechnique.edu. Research supported by ANR project PACMAN, and the joint lab FiME.}
\qquad\quad
Nizar TOUZI \thanks{Ecole Polytechnique, CMAP boulevard des Mar\'echaux, F-91120 Palaiseau, France.
	E-mail: \sf nizar.touzi@polytechnique.edu. Research supported by ANR project PACMAN, and the Chaires FiME-FDD and Financial Risks of the Louis Bachelier Institute. \\}}
\date{\today}
\begin{document}
\maketitle

{\bf Abstract.}
We consider a mean field game with common noise  in which the diffusion coefficients may be controlled. We prove existence of a weak relaxed solution under some continuity conditions on the coefficients. We then show that, when there is no common noise, the solution of this mean field game is characterized  by a McKean-Vlasov type second order backward SDE.

\section{Introduction}

In this paper we consider a mean field game with common noise  in which the diffusion coefficients may be controlled. Mean field games have been introduced by Lasry \& Lions \cite{lasry_lions_2007}, and Huang, Malhamé \& Caines \cite{huang_malhame_caines}, and generated a very extended literature. In the present paper, we address an extension which allows for diffusion control and the presence of common noise.

The problem is defined as a Nash equilibrium within a crowd of players who solve, given a fixed random measure $M$, the individual maximization problem
 \begin{equation}\label{IntroCost}
 \sup_{\alpha}\mathbbm{E}\left[\xi(X^{\alpha,M}) + \int_0^Tf_r(X^{\alpha,M},\alpha_r,M)dr\right],
 \end{equation}
where $X^{\alpha,M}$ is the solution of the controlled non-Markovian SDE 
\begin{equation}\label{IntroSDE}
dX^{\alpha,M}_t= b_t(X^{\alpha,M},\alpha_t,M)dt + \sigma^1_t(X^{\alpha,M},\alpha_t,M)dW^1_t+\sigma^0_t(X^{\alpha,M},\alpha_t,M)dW^0_,
\end{equation}
and $\alpha$ is the control process of a typical player. Here, $X^{\alpha,M}$ is the state process of a typical player, with dynamics controlled by $\alpha$, and governed by the \textit{individual noise} $W^1$ and the \textit{common noise} $W^0$. The individual noise $W^1$ only impacts the dynamics of one specific player, while the common noise $W^0$ impacts the dynamics of all players. 

The coefficients of the state equation depend on the random distribution $M$, which represents a distribution on the canonical space of the state process conditional on the common noise $W^0$, and is intended to model the empirical distribution of the states of the interacting crowd of players.

A solution of the mean field game is then a random measure $M$ such that the corresponding optimal diffusion $X^{*,M}$ induced by the problem \eqref{IntroCost} satisfies:
\begin{equation}\label{IntroEquilib}
	M = \mathbbm{P}\circ (X^{*,M}|W^0)^{-1}\quad \text{a.s,}
\end{equation}
where $\mathbbm{P}\circ (X^{*,M}|W^0)^{-1}$ denotes the conditional law of $X^{*,M}$ given $W^0$. 

We prove existence of a weak relaxed solution of this problem under some continuity conditions on the coefficients. By \textit{weak solution} we mean that we work with a controlled martingale problem instead of a controlled SDE intended in the strong sense, and that we find a weaker fixed point of type 	$M = \mathbbm{P}\circ (X^{*,M}|W^0,M)^{-1}$ a.s. instead of \eqref{IntroEquilib}, a notion introduced by Carmona, Delarue \& Lacker \cite{LackerCommonNoise}. By \textit{relaxed solution} we mean that we allow relaxed controls, also called mixed strategies, which is the standard framework in stochastic control theory in order to guarantee existence of optimal controls, see Hausmann \cite{haussmann1976general} and El Karoui, Jeanblanc \& N'Guyen  \cite{nicole1987compactification}. If the control process $\alpha$ takes values in a subset $A$ of a finite dimensional space, then relaxed controls $q$ take values $q_t$ in the space $\mathfrak{M}_+^1(A)$ of probability measures on $A$. 

In the relaxed formulation, the state process $X^{q,M}$ is controlled by the relaxed control $q$, and the cost functional takes the relaxed form 
$$
\mathbbm{E}\left[\xi(X^{q,M}) + \int_0^T\int_Af_r(X^{q,M},a,M)q_r(da)dr\right].
$$
The first main result of this paper is the existence of a weak relaxed solution of the mean field game in the context where the state dynamics exhibit both common noise and controlled diffusion coefficients.

The second part of the paper specializes to the no common noise setting. In this context, our second main result is a characterization of the solution of this mean field game by means of a McKean-Vlasov second order backward SDE of the form
\begin{equation}\label{IntroMkV2BSDE}
	Y_t = \xi +\int_t^T F_r(X,Z_r,\hat{\sigma}_r^2,m)dr - \int_t^TZ_rdX_r + \, U_T-U_t,\quad t\in[0,T],\quad \mathcal{P}^m-\text{q.s.}
\end{equation}
whose precise meaning will be made explicit in Section \ref{S2BSDE}. This extends the previous results by Carmona \& Delarue \cite{CarmonaDelarueI,CarmonaDelarueII} characterizing the solution of a mean field game by McKean-Vlasov backward SDEs in the  uncontrolled diffusion setting. We believe that the present paper is the first instance of interest in such McKean-Vlasov second order backward SDEs.

\vspace{5mm}

\noindent {\bf Literature review.} Mean field games have been introduced by the pioneering
works of Lasry \& Lions \cite{lasry_lions_2007}, and Huang, Malhamé \& Caines \cite{huang_malhame_caines}. Their works were the first ones to consider the limit of a symmetric game of $N$ players when $N$ tends to infinity, and to link it to a fixed point problem of Mc-Kean Vlasov type, which in its most simple form may be described as follows.
\begin{enumerate}
	\item For any probability measure $m$ on the space of continuous paths, find the optimal control $\alpha^{m}$ which minimizes 
	the cost functional 
	\begin{equation}\label{IntroCost2}
	\mathbbm{E}\left[g(X^{\alpha}_T) + \int_0^Tf_r(X^{\alpha}_r,\alpha_r,m)dr\right],
	\end{equation}
	where $X^{\alpha}$ is the controlled diffusion  of dynamics 
	\begin{equation}\label{IntroSDE2}
	dX^{\alpha}_t= \alpha_tdt + dW_t.
	\end{equation}
	\item Find a equilibrium measure verifying $m^*=\mathcal{L}\left(X^{\alpha^{m^*}}\right)$.
\end{enumerate}
The idea being that $m^*$ models the behavior of a population of individuals. Each one of these individuals  controls a diffusion of type \eqref{IntroSDE2}, where $W$ is a Brownian motion "observed" only by this specific individual  and optimizes  the cost \eqref{IntroCost2}.

During the following decade, this topic generated a huge literature with results based  on PDE methods on one hand (see for instance Lasry \& Lions \cite{lasry_lions_2007}), and on probabilistic methods on the other hand, namely through McKean-Vlasov forward-backward SDEs, see Carmona \& Delarue \cite{CarmonaDelarueI} for an overview.

The extension of mean field games to the common noise situation (i.e with an additional noise $W^0$ in \eqref{IntroSDE2}) was addressed recently, motivated by a strong need from applications so as to introduce a source of randomness observed by all players. One may for example refer to \cite{CarmonaDelarueII}.

The first part of the present paper is in the continuity of 
a recent sequence of papers due to R. Carmona, F. Delarue and D. Lacker. In particular, \cite{LackerVolatility} proves existence of a weak relaxed solution for a MFG with controlled diffusion coefficient but without common noise under merely continuity assumptions on the coefficients, and \cite{LackerCommonNoise} shows existence of a weak solution of an MFG with common noise but without control in the diffusion coefficient, under similar continuity assumptions on the coefficients. The present paper fills the gap between these two works, by extending this existence result in the situation with common noise, and allowing for diffusion control.

While MFGs with a control in the drift are connected to McKean-Vlasov backward SDEs, one naturally expect that the control in the diffusion coefficient will in some way link the MFG to the second order extension of backward SDEs. The latter is a notion of Sobolev type solution for path-dependent PDEs, introduced by Soner, Touzi \& Zhang \cite{soner_touzi_zhang1} as a representation of diffusion control problems (in contrast with backward SDEs which are related to drift control). 
A first existence result was obtained in \cite{soner_touzi_zhang2}, and such second order backward SDEs proved very useful to study fully non linear second order PDEs, as an extension of the links between backward SDEs and semi-linear PDEs, see \cite{ektz,ekren_touzi_zhang}. We also refer to Possama\"{\i}, Tan \& Zhou \cite{ptz} for a more general existence result, and to Lin, Ren, Touzi \& Yang \cite{lrty} for the extension to a random terminal time. 
\\


The paper is organized in two parts. Sections \ref{SMFG} and \ref{MFGSol} concern Mean Field Games with common noise and controlled diffusion coefficient; Sections \ref{SMkV2BSDE}, \ref{S2BSDE} and \ref{S6} develop the links between MFGs and McKean-Vlasov second order backward SDEs.

Section \ref{SMFG} provides the precise formulation of our mean field game, see in particular Definition \ref{weaksol}.  Section \ref{MFGSol} is devoted to the proof of existence of a weak relaxed solution (see Theorem \ref{T_existence}) under Assumption \ref{H_existence}. The proof is divided in three parts. We start by showing some preliminary topological results in Subsection \ref{S3.2}. Then, in Subsection \ref{sec:discretenoise}, we introduce as in Carmona Delarue \& Lacker \cite{LackerCommonNoise} the notion of discretized strong equilibiria (see Definition \ref{D_discret}) and prove existence of such equilibria, see Proposition \ref{P_discret}. Finally, in Subsection \ref{S3.4}, we conclude the proof of existence of a weak relaxed solution of the MFG by considering the limit of discretized strong equilibria.
 \\
 In Section \ref{SMkV2BSDE}, we introduce the notion of McKean-Vlasov 2BSDE (see Definition \ref{MkV2BSDE}), and state the main result of the paper, being that the solution of an MFG with controlled diffusion coefficients provides a solution of such a McKean-Vlasov 2BSDE, see Theorem, \ref{ThMkV}. This theorem relies strongly on the representation of relaxed control problems with controlled diffusion coefficient through (classical) 2BSDEs, which proof we postpone to Section \ref{S2BSDE}. See Proposition \ref{PGirs2}. Section \ref{S6} contains the proof of Theorem \ref{ThMkV}.

\section{Formulation of the Mean Field Game}\label{SMFG}

\subsection{Notations}

A topological space $E$ will always be considered as a measurable space 
equipped with its Borel $\sigma$-field which will sometimes be denoted $\mathcal{B}(E)$. We denote by $\mathfrak{M}_+^1(E)$ and $\mathfrak{M}(E)$  the spaces of probability measures and of bounded signed measures on $(E,\mathcal{B}(E))$, respectively.  These spaces are naturally equipped with the topology of weak convergence, and the corresponding Borel $\sigma$-field.

Throughout this paper, we fix a maturity date $T>0$, positive integers $d,p_1,p_0\in\mathbbm{N}^*$, a compact Polish space $A$, and we denote $\Omega:=\mathcal{X}\times \mathcal{Q}\times \mathcal{W}\times \mathfrak{M}_+^1(\mathcal{X})$ the canonical space, where
	\begin{itemize}
		\item $\mathcal{X}:=\mathcal{C}([0,T],\mathbbm{R}^d)$ is the path space of the state process;
		\item $\mathcal{Q}$ is the set of relaxed controls, i.e. of measures $q$ on $[0,T]\times A$ such that $q(\cdot\times A)$ is equal to the Lebesgue measure. Each $q\in\mathcal{Q}$ may be identified with a measurable function $t\mapsto q_t$ from $[0,T]$ to $\mathfrak{M}_+^1(A)$ determined a.e. by $q(dt,da)=q_t(da)dt$;
		\item $\mathcal{W}:= \mathcal{W}^1\times\mathcal{W}^0$  where  $\mathcal{W}^i:=\mathcal{C}([0,T],\mathbbm{R}^{p_i}),\, i\in\{1,0\}$ 
		denote the path space of the individual noise and  that of the common noise, respectively,   
		 and we denote $\mathbbm{W}^i$  the Wiener measure on $\mathcal{W}^i$.
		
	\end{itemize}

Each of these spaces is equipped with its Borel $\sigma$ field. We also denote  $\mathcal{F}:=\mathcal{B}(\Omega)$ and $(X,Q,W,M)$  the identity (or canonical) map on $\Omega$, with $W:=(W^1,W^0)$.

On $\mathcal{X}$ (resp. $\mathcal{Q}$, $\mathcal{W}^1$, $\mathcal{W}^0$), the canonical process $X$ (resp. $Q$, $W^1$, $W^0$) generates a natural filtration $\mathbbm{F}^X$ (resp. $\mathbbm{F}^Q$, $\mathbbm{F}^{W^1}$, $\mathbbm{F}^{W^0}$). We use similar notations on product spaces.

$\mathfrak{M}_+^1(\mathcal{X})$ is equipped with a filtration $\mathbbm{F}^{M}$ defined by $\mathcal{F}^{M}_t:=\sigma(M(F):F\in\mathcal{F}^{X}_t)$. We can similarly define a filtration  $\mathbbm{F}^{X,Q,W,M}$ on  $\Omega$, which we shall rather denote $\mathbbm{F}$.

Let $\mathbbm{P}\in\mathfrak{M}_+^1(\Omega)$, $Y$ a r.v.  on $(\Omega,\mathcal{F})$ with values in a measurable space $(E,\mathcal{E})$, and $\mathcal{G}$ a sub $\sigma$-field of $\mathcal{F}$. We denote by $\mathbbm{P}\circ(Y|\mathcal{G})^{-1}$ the random measure which to some $F\in\mathcal{E}$ maps $\mathbbm{P}[Y\in F|\mathcal{G}]$.

Moreover, if $(\mathbbm{P}^{\mathcal{G}}_{\omega})_{\omega\in\Omega}$ is a regular conditional probability distribution of $\mathbbm{P}$ given $\mathcal{G}$, we have $\mathbbm{P}\circ(Y|\mathcal{G})^{-1}:(F,\omega)\longmapsto  \mathbbm{P}^{\mathcal{G}}_{\omega}(Y\in F)$, $\mathbbm{P}$ a.s.

\subsection{Controlled state process}
\label{sec:controlled}

The controlled state process is defined as a weak solution of the following relaxed SDE, whose precise meaning will be made clear in Definition \ref{def_admissible} (ii),
\begin{equation}\label{MFG}
X_t= x + \int_0^t\int_Ab_r(a,M)Q_r(da)dr + \int_0^t\int_A\sigma_r(a,M)N^{W}(da,dr).
\end{equation}
Here, $N^W:=(N^{W^1},N^{W^0})$ is a pair of orthogonal martingale measures with intensity $Q_tdt$, see e.g El Karoui \& Méléard \cite{EK_Mele}, $M:\Omega \longrightarrow \mathfrak{M}_+^1(\mathcal{X})$ is a random probability measure on $\mathcal{X}$, and
 $$
 \sigma:=(\sigma^1 \| \sigma^0), 
 ~~
 (b,\sigma^i) : [0,T]\times\mathcal{X}\times A\times \mathfrak{M}_+^1(\mathcal{X})
 \longrightarrow 
 \mathbbm{R}^d\times \mathbbm{M}_{d,p_i}(\mathbbm{R}),~~i=0,1
 $$
are  progressively measurable in the sense that for all $t\leq T$, their restriction to $[0,t]\times \mathcal{X}\times A\times \mathfrak{M}_+^1(\mathcal{X})$ is $\mathcal{B}([0,t])\otimes \mathcal{F}^X_t\otimes \mathcal{B}(A)\otimes \mathcal{F}^{M}_t$-measurable.

In order to introduce the precise meaning of \eqref{MFG}, we denote $p:=p^1+p^0$, $\bar{b}:=\left(\!\begin{array}{cc} b\\ \hline 0_p\end{array}\!\right)$, $\bar{\sigma}:=\left(\!\begin{array}{cc} \sigma\\ \hline I_p\end{array}\!\right)$, and we introduce the generator of the controlled pair $(X,W)$, defined for $(t,x,a,m)\in [0,T]\times\mathcal{X}\times A\times\mathfrak{M}_+^1(\mathcal{X})$ by: 
	$$
	\mathcal{A}^{a,x,m}_t\phi
	:=
	\bar{b}_t(x,a,m)\cdot D \phi+ \frac{1}{2}
	\bar{\sigma}\bar{\sigma}^{\intercal}_t(x,a,m):D^2\phi,
	~\mbox{for all}~
	\phi\in\mathcal{C}^2_b(\mathbbm{R}^d\times\mathbbm{R}^{p}),
	$$
where $:$ denotes the scalar product of matrices.
 
\begin{definition}\label{def_admissible}
	{\rm (i)} $\Pi^0$ denotes the set of all measures $\pi^0\in\mathfrak{M}_+^1\left(\mathcal{W}^0\times\mathfrak{M}_+^1(\mathcal{X}) \right)$ such that $W^0$ is a $(\pi^0,\mathbbm{F}^{W^0,M})$-Brownian motion.
	\\
	{\rm (ii)} For  $\pi^0\in\Pi^0$, a $\pi^0$\textbf{-admissible control} is a probability measure $\mathbbm{P}\in\mathfrak{M}_+^1(\Omega)$ with marginal $\mathbbm{P}\circ(W^0,M)^{-1}=\pi^0$, satisfying
		
		\begin{enumerate}
			\item[1] for all $\phi\in\mathcal{C}^2_b(\mathbbm{R}^d\times\mathbbm{R}^{p})$, the following process is a $(\mathbbm{P},\mathbbm{F})$-martingale:
			$$\phi(X_{t},W_{t})-\int_0^{t}\int_A \mathcal{A}^{a,X,M}_r\phi(X_r,W_r)\, Q_r(da)dr,\quad t\in[0,T];$$
			\item[2] $M$ is $\mathbbm{P}$ independent of $W^1$;
			\item[3] for all $t\in[0,T]$, $\mathcal{F}_t^{Q}$ is $\mathbbm{P}$ independent of $\mathcal{F}^{W}_T$  conditionally on $\mathcal{F}^{W}_t$, i.e. 
			\begin{equation}\label{causality}
			\mathbbm{P}[A_t\cap A_T|\mathcal{F}^W_t]=\mathbbm{P}[A_t|\mathcal{F}^W_t]\mathbbm{P}[ A_T|\mathcal{F}^W_t],
			~\mbox{for all}~
			(A_t,A_T)\in \mathcal{F}^Q_t\times\mathcal{F}^W_T.
			\end{equation}
		\end{enumerate}
We denote by  $\mathcal{P}(\pi^0)$   the set of $\pi^0$-admissible controls, and we introduce the set of \textbf{admissible controls} $\mathcal{P}(\Pi^0)$.
\end{definition}

We shall refer to \eqref{causality} as a causality condition.

\subsection{The Mean Field Game}

Let $f:[0,T]\times\mathcal{X}\times A\times \mathfrak{M}_+^1(\mathcal{X})\longrightarrow \mathbbm{R}$ be a progressively measurable map, $\xi:\mathcal{X}\longrightarrow \mathbbm{R}$ a Borel map, and define the functional 
\begin{equation}\label{J}
	J(\mathbbm{P}):=\mathbbm{E}^{\mathbbm{P}}\Big[\xi + \int_0^T\!\!\!\int_Af_r(a,M)Q_r(da)dr\Big],
	~~\mathbbm{P}\in\mathfrak{M}_+^1(\Omega).
\end{equation}

A solution of the Mean Field Game (MFG) is defined by the two following steps:
\begin{enumerate}
 \item Given the joint law $\pi^0\in\Pi^0$ of the pair $(W^0,M)$, the individual optimization problem  consists in the maximization of the functional $J$ over all weak solutions $\mathbbm{P}\in\mathcal{P}(\pi^0)$ of \eqref{MFG} in the sense of Definition \ref{def_admissible} (ii). The corresponding set of optimal solutions 
 $$
 \mathcal{P}^*(\pi^0):= \underset{\mathbbm{P}\in \mathcal{P}(\pi^0)}{\text{\rm{Argmax} }}J(\mathbbm{P}),
 ~~\mbox{for all}~~
 \pi^0\in\Pi^0,
 $$
defines a correspondence $\mathcal{P}^*$ from $\Pi^0$ to $\mathcal{P}(\Pi^0)$.
\item A strong solution of the MFG is an optimal probability $\mathbbm{P}^*\in\mathcal{P}^*(\pi^0)$ such that $M = \mathbbm{P}^*\circ(X|\mathcal{F}^{W^0})^{-1} $ a.s., i.e. under $\mathbbm{P}^*$, $M$ is the conditional law of the state process $X$ given the common noise $W^0$. 
\end{enumerate}
For technical reason explained below, we need to consider the following weaker notion.

\begin{definition}{\rm (Carmona, Delarue \& Lacker \cite{LackerCommonNoise})}\label{weaksol}
	A weak relaxed  solution of the MFG is a probability  $\mathbbm{P}\in\mathfrak{M}_+^1(\Omega)$ such that:
	
	\vspace{3mm}
	\quad\rm{\bf{Individual optimality:}}\quad $\mathbbm{P}\in\mathcal{P}^*(\pi^0)$, for some $\pi^0\in\Pi^0$ ;
	
	\vspace{3mm}
	\quad\rm{\bf{Weak Equilibrium:}}\quad  $M =\mathbbm{P}\circ(X|\mathcal{F}^{M,W^0})^{-1}$, $\mathbbm{P}$ a.s.
	
\end{definition}

Observe that the weak equilibrium condition  in the last definition is 
 indeed weaker than the strong equilibrium requirement $M = \mathbbm{P}\circ(X|\mathcal{F}^{W^0})^{-1} $ a.s. which is thus named strong solution of the MFG by Carmona \& Delarue \cite{CarmonaDelarueII}. The reason for introducing this weak notion of solution in \cite{LackerCommonNoise} is recalled in Remark \ref{rem:conditionalexp} below.

%

\section{Weak relaxed Nash equilibrium}\label{MFGSol}

\subsection{Assumptions and main results}\label{S3.1}

The following assumption will be needed to prove existence of weak relaxed solutions of the MFG.
\begin{assumption}\label{H_existence}\
	\begin{enumerate}
		\item[\rm (i)] $b,\sigma,f$ are bounded and continuous in $(x,a,m)$, for all $t$, and $\xi$ is bounded continuous;
		\item[\rm (ii)] for every probability measure $\mathbbm{Q}$ on $\mathcal{Q}\times \mathcal{W}\times \mathfrak{M}_+^1(\mathcal{X})$ under which $W$ is a Brownian motion, there exists a unique $\mathbbm{P}\in\mathfrak{M}_+^1(\Omega)$ with marginal $\mathbbm{P}\circ (Q,W,M)^{-1}= \mathbbm{Q}$ and satisfying Item 1 of Definition \ref{def_admissible} (ii).
	\end{enumerate}
\end{assumption}

Assumption \ref{H_existence} (ii) is an existence and uniqueness condition for the SDE \eqref{MFG}. It is verified for instance when $b,\sigma$ are bounded and locally Lipschitz in $x$, uniformly in $(t,a,m)$. This can be seen by considering the strong solution of the controlled SDE, which is then driven by martingale measures, see \cite{EK_Mele} for basic results concerning such SDEs.

We may now state the main result of this section.

\begin{theorem}\label{T_existence}
Under Assumption \ref{H_existence}, there exits at least one weak relaxed solution of the MFG in the sense of Definition \ref{weaksol}.
\end{theorem}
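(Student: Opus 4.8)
The plan is to follow the compactness plus fixed-point scheme of Carmona, Delarue \& Lacker \cite{LackerCommonNoise}, adapted here to the controlled-diffusion relaxed formulation, in three stages: topological preliminaries on the set of admissible controls, existence of discretized strong equilibria via a set-valued fixed point theorem, and finally passage to the limit as the discretization of the common noise is refined.

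First I would establish the topological structure on the relevant subsets of $\mathfrak{M}_+^1(\Omega)$ for the weak-convergence topology. Since $A$ is compact Polish, the space $\mathcal{Q}$ of relaxed controls is compact; since $b,\sigma$ are bounded by Assumption \ref{H_existence} (i), the laws of the state process $X$ solving the controlled martingale problem of Item 1 of Definition \ref{def_admissible} (ii) are tight on $\mathcal{X}$ by a standard tightness criterion for semimartingales with uniformly bounded characteristics; and the $W$-marginal is the fixed Wiener law. Together these give relative compactness. The essential point is closedness: I must check that each defining constraint is preserved under weak limits, namely the martingale property of Item 1, the independence of $M$ and $W^1$ (Item 2), and the causality condition \eqref{causality}, and also that the functional $J$ of \eqref{J} is continuous in $\mathbbm{P}$, which follows from the boundedness and continuity of $\xi,f$ together with a Skorokhod representation argument. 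Closedness of the martingale constraint is precisely where the controlled diffusion enters: because $\sigma$ is controlled, the quadratic variation of $X$ depends on the relaxed control, so I would test against $\phi\in\mathcal{C}^2_b$, pass to the limit inside the relaxed generator $\mathcal{A}^{a,X,M}_t$ using the continuity of $(b,\sigma)$ in $(x,a,m)$, and invoke the good behaviour of the orthogonal martingale measures of El Karoui \& Méléard \cite{EK_Mele} under weak limits.

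Next, following Definition \ref{D_discret}, I would discretize the common noise, replacing the conditioning $\sigma$-field $\mathcal{F}^{W^0}$ by a coarser finite $\sigma$-field generated by a finite measurable partition of $\mathcal{W}^0$. On this discretized problem the equilibrium map becomes tractable: a candidate joint law $\pi^0\in\Pi^0$ of $(W^0,M)$ is updated by first solving the individual optimization to select some $\mathbbm{P}\in\mathcal{P}^*(\pi^0)$ and then recomputing $M$ as the \emph{discretized} conditional law of $X$. Because the conditioning is now with respect to a finite $\sigma$-field, this composite correspondence has nonempty convex compact values and a closed graph on a compact convex subset of a locally convex space, so the Kakutani--Fan--Glicksberg theorem produces a fixed point, namely the discretized strong equilibrium of Proposition \ref{P_discret}. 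Nonemptiness of $\mathcal{P}^*(\pi^0)$ and upper hemicontinuity of $\mathcal{P}^*$ come from the compactness and continuity of the first stage, while convexity of the optimizer set follows from the affine dependence of $J$ and of the (relaxed) constraints on $\mathbbm{P}$ once the marginal $\pi^0$ is held fixed.

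Finally I would refine the discretization along a sequence, obtain discretized strong equilibria $\mathbbm{P}^n$, and extract a weakly convergent subsequence $\mathbbm{P}^n\to\mathbbm{P}$ using the compactness of the first stage. The closedness properties pass admissibility and individual optimality to the limit, but the equilibrium identity survives only in the weak form $M=\mathbbm{P}\circ(X|\mathcal{F}^{M,W^0})^{-1}$ of Definition \ref{weaksol}. The hard part will be exactly this last point: conditional expectations are not continuous for the weak topology, so the strong equilibrium $M=\mathbbm{P}\circ(X|\mathcal{F}^{W^0})^{-1}$ cannot be expected to survive the limit; enlarging the conditioning $\sigma$-field to include $M$ itself is the device, introduced in \cite{LackerCommonNoise}, that restores stability, since conditioning on a $\sigma$-field already containing the quantity being computed is compatible with weak limits. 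Verifying this stability, in tandem with the closedness of the controlled relaxed martingale problem under the limit, is the technical crux of the whole argument.
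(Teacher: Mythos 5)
Your overall strategy is the same as the paper's: topological preliminaries on $\mathcal{P}$ and $\mathcal{P}^*$, discretized strong equilibria via a Kakutani-type fixed point, then a limit passage in which only the weak equilibrium condition $M=\mathbbm{P}\circ(X|\mathcal{F}^{M,W^0})^{-1}$ survives. However, there is a genuine gap in your second stage. You propose to discretize \emph{only} the conditioning $\sigma$-field, i.e.\ to update $M$ as $\mathbbm{P}\circ(X|\mathcal{F}^{n,W^0})^{-1}$, keeping the path $X$ itself. This update map does not take values in $\Pi^0$, so the fixed-point argument cannot even be set up. The reason: the cells generating $\mathcal{F}^{n,W^0}$ involve increments of $W^0$ over the \emph{whole} horizon $[0,T]$. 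Take $F\in\mathcal{F}^X_t$ with $t$ strictly between grid points $t_k$ and $t_{k+1}$; then $\mathbbm{P}[X\in F|\mathcal{F}^{n,W^0}]$ depends on which cell the straddling increment $W^0_{t_{k+1}}-W^0_{t_k}$ falls into, hence is \emph{not} $\mathcal{F}^{W^0}_t$-measurable. Consequently $\mathcal{F}^M_t$ (which contains $M(F)$ for all $F\in\mathcal{F}^X_t$) carries information about post-$t$ increments of $W^0$, and $W^0$ fails to be an $\mathbbm{F}^{M,W^0}$-Brownian motion under the updated law — already visible in the uncontrolled example $X=W^0$, where $\mathbbm{P}[W^0_t>0\,|\,\mathcal{F}^{n,W^0}]$ is correlated with $W^0_{t_{k+1}}-W^0_t$. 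This is exactly why the paper introduces the lagged piecewise-linear interpolation $\hat{X}^n$ and defines the discretized equilibrium by $M=\mathbbm{P}\circ(\hat{X}^n|\mathcal{F}^{n,W^0})^{-1}$: since $\{\hat{X}^n\in F\}\in\mathcal{F}^X_{t_k}$ for $F\in\mathcal{F}^X_t$, the causality condition and the product structure of the cells give $\mathbbm{P}[\hat{X}^n\in F|\mathcal{F}^{n,W^0}]=\mathbbm{P}[\hat{X}^n\in F|\mathcal{F}^{n,W^0}_{t_k}]$, which restores adaptedness and keeps the updated law inside $\Pi^0$.

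Two further points where your sketch is thinner than what the argument requires. First, Kakutani needs a \emph{compact} convex domain, and $\Pi^0$ is not compact; the paper builds the compact convex set $\Pi^0_c$ out of the sets $\bar{K}_k=\bigcup_{n}\hat{X}^n(K_k^\infty)$, a construction that is itself tied to $\hat{X}^n$ precisely so that every $\Phi_n$ maps into the \emph{same} compact set uniformly in $n$. Second, in your limit stage the identity you inherit from stage two concerns $\hat{X}^n$, not $X$; passing from one to the other uses the uniform convergence $\hat{X}^n\to X$ on compacts together with a tightness-plus-uniform-continuity estimate and a monotone class argument over $\bigcup_n\mathcal{F}^{n,W^0}$. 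So the interpolation device you omitted is not a technical nicety: it is what makes both the fixed-point stage and the limit stage function.
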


The proof of this theorem will mainly rely on the  Kakutani-Fan-Glicksberg fixed point Theorem. The Appendix Section of the present paper provides an introduction to set valued functions (or correspondences) which will be used extensively in this paper, we refer to \cite{aliprantis} Chapter 17.


\subsection{Preliminary topological results}\label{S3.2}

The aim of this subsection is to prove the following topological results.

\begin{proposition}\label{PropReg}

\noindent {\rm (i)} $\Pi^0$  is a closed convex subset of $\mathfrak{M}_+^1\left(\mathcal{W}^0\times\mathfrak{M}_+^1(\mathcal{X})\right)$, and consequently of $\mathfrak{M}\left(\mathcal{W}^0\times\mathfrak{M}_+^1(\mathcal{X})\right)$;

\noindent {\rm (ii)} $\mathcal{P}$ is a continuous correspondence with nonempty compact convex values;

\noindent {\rm (iii)} $\mathcal{P}^*$ is an upper hemicontinuous correspondence with nonempty compact convex values, moreover, $\mathcal{P}^*(\Pi^0)$ is closed.
\end{proposition}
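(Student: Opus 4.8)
The plan is to establish the three parts of Proposition~\ref{PropReg} in order, each building on the previous, using the compactness afforded by the compact Polish control space $A$ and the boundedness/continuity in Assumption~\ref{H_existence}(i), together with the Kakutani-Fan-Glicksberg machinery referenced for Theorem~\ref{T_existence}.

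**Part (i).** To show $\Pi^0$ is closed and convex, I would first note convexity is immediate: if $\pi^0_1,\pi^0_2\in\Pi^0$ and $\lambda\in[0,1]$, then under $\lambda\pi^0_1+(1-\lambda)\pi^0_2$ the process $W^0$ still has the martingale and quadratic-variation characterization of an $(\mathbbm{F}^{W^0,M})$-Brownian motion, because the defining martingale property $\mathbbm{E}[\,(W^0_t-W^0_s)\,G\,]=0$ and $\mathbbm{E}[\,((W^0_t-W^0_s)(W^0_t-W^0_s)^\intercal-(t-s)I)\,G\,]=0$ for bounded $\mathcal{F}^{W^0,M}_s$-measurable $G$ are linear in the measure. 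For closedness, I would characterize ``$W^0$ is an $(\mathbbm{F}^{W^0,M})$-Brownian motion'' through countably many such expectations against bounded continuous test functions (using L\'evy's characterization via the exponential martingales $\exp(i\theta\cdot W^0_t+\tfrac12|\theta|^2 t)$), each of which is continuous under weak convergence, so the intersection of the corresponding closed sets is closed.

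**Part (ii).** For the continuity of $\mathcal{P}$ with nonempty compact convex values, I would treat three sub-claims. Nonemptiness follows from Assumption~\ref{H_existence}(ii): given $\pi^0$, build a $\mathbbm{Q}$ on $\mathcal{Q}\times\mathcal{W}\times\mathfrak{M}_+^1(\mathcal{X})$ (e.g. taking a trivial relaxed control, the product of $\mathbbm{W}^1$ with the $\mathcal{W}^0$-marginal, and $M$ arranged to respect the independence/causality conditions) and extend uniquely to $\mathbbm{P}$. Convexity of $\mathcal{P}(\pi^0)$ again uses linearity of the martingale problem and of the conditions in Items 2--3 in the probability measure. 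Compactness is where the relaxed formulation pays off: the fixed marginal $\pi^0$ and the compactness of $A$ force tightness of the $\mathcal{Q}$- and $\mathcal{X}$-components (boundedness of $b,\sigma$ gives uniform moment/modulus-of-continuity estimates on $X$ via the martingale problem), so $\mathcal{P}(\pi^0)$ is relatively compact, and closedness comes from stability of the martingale problem and of conditions 2--3 under weak limits. The harder continuity statement splits into upper and lower hemicontinuity: I would prove upper hemicontinuity by a closed-graph argument (if $\pi^0_n\to\pi^0$ and $\mathbbm{P}_n\in\mathcal{P}(\pi^0_n)$ with $\mathbbm{P}_n\to\mathbbm{P}$, then $\mathbbm{P}\in\mathcal{P}(\pi^0)$, passing the martingale problem and independence conditions to the limit using continuity of the coefficients), and lower hemicontinuity by exhibiting, for any $\pi^0_n\to\pi^0$ and any target $\mathbbm{P}\in\mathcal{P}(\pi^0)$, a recovery sequence $\mathbbm{P}_n\in\mathcal{P}(\pi^0_n)$ converging to $\mathbbm{P}$, which I would construct by gluing the fixed control/state dynamics onto the varying $(W^0,M)$-marginals $\pi^0_n$ and invoking the uniqueness in Assumption~\ref{H_existence}(ii).

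**Part (iii).** For $\mathcal{P}^*$, I would apply the Berge maximum theorem: since $J$ in~\eqref{J} is continuous on $\mathcal{P}(\Pi^0)$ (by Assumption~\ref{H_existence}(i), as $\xi,f$ are bounded continuous and the integral against $Q_r(da)dr$ is continuous under weak convergence of $\mathbbm{P}$) and $\mathcal{P}$ is a continuous correspondence with nonempty compact values by (ii), the $\mathrm{Argmax}$ correspondence $\mathcal{P}^*$ is upper hemicontinuous with nonempty compact values. Convexity of each $\mathcal{P}^*(\pi^0)$ follows because $J$ is affine (actually linear) in $\mathbbm{P}$, so the set of maximizers over the convex set $\mathcal{P}(\pi^0)$ is convex. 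Finally, closedness of $\mathcal{P}^*(\Pi^0)$ follows from upper hemicontinuity of $\mathcal{P}^*$ together with closedness of $\Pi^0$ from~(i) and compactness of the fibers. \emph{The main obstacle} I expect is the lower hemicontinuity in~(ii): while upper hemicontinuity and compactness are relatively routine limit arguments, constructing a weakly convergent recovery sequence $\mathbbm{P}_n\in\mathcal{P}(\pi^0_n)$ for an arbitrary $\mathbbm{P}\in\mathcal{P}(\pi^0)$ requires carefully transporting the conditional structure (the causality condition~\eqref{causality} and the $M$-$W^1$ independence) along the perturbed marginals, and it is here that Assumption~\ref{H_existence}(ii) must be used in an essential, constructive way rather than merely as an existence fact.
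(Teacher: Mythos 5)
Your parts (i) and (iii) are correct and essentially identical to the paper's own arguments: (i) via the L\'evy/martingale characterization, convexity by linearity of the martingale conditions, closedness by passing bounded continuous uniformly integrable test functionals to the limit; (iii) via the Berge maximum theorem, linearity of $J$ for convexity, and the closed-graph property of the uhc compact-valued correspondence $\mathcal{P}^*$ combined with closedness of $\Pi^0$ and continuity of marginals for closedness of $\mathcal{P}^*(\Pi^0)$. Your compactness, convexity and nonemptiness claims in (ii) also match the paper (tightness from boundedness of $b,\sigma$ and compactness of $A$ and $\mathcal{Q}$, stability of the martingale problem under limits and convex combinations, Assumption \ref{H_existence} for nonemptiness).

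The genuine gap is exactly where you flag it: lower hemicontinuity of $\mathcal{P}$. Your proposed recovery-sequence construction --- disintegrate $\mathbbm{P}$ into the marginal $\pi^0$ and a conditional kernel for $Q$ given $(W,M)$, then ``glue'' that kernel onto the perturbed marginals $\pi^0_n$ and lift by uniqueness in Assumption \ref{H_existence}(ii) --- does not go through as described. The obstruction is that a disintegration kernel $(w,m)\longmapsto \kappa(w,m;\cdot)$ is only measurable, never continuous in general, so $\pi_n\otimes\kappa$ need not converge weakly to $\pi\otimes\kappa$ when $\pi_n\to\pi$; this is the very same failure of continuity of conditioning that Remark \ref{rem:conditionalexp} identifies as the reason the whole discretization scheme of Section \ref{sec:discretenoise} is needed. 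Moreover, even if convergence held, one would still have to verify that the transported law satisfies the causality condition \eqref{causality} and the $M$--$W^1$ independence, which are properties of the joint law and not of the kernel alone. The paper circumvents this by never touching kernels: it factorizes $\mathcal{P}=\Psi\circ\mathfrak{Q}_c\circ\mathbf{T}$ (Corollary \ref{L_K_W_mu}, Lemmas \ref{L2} and \ref{L3}), where $\mathbf{T}$ and $\Psi$ are homeomorphisms, and obtains lower hemicontinuity of $\mathfrak{Q}_c$ from the fact that the double-marginal map $\psi:\mathbbm{Q}\longmapsto\big(\mathbbm{Q}\circ(Q,W)^{-1},\mathbbm{Q}\circ(W,M)^{-1}\big)$ is an \emph{open mapping} (adapting Eifler's theorem \cite{eifler}), so that $\psi^{-1}$ is lhc by Theorem 17.7 in \cite{aliprantis}; this open-mapping result is precisely the rigorous substitute for your gluing step, and without it (or an equivalent device) your argument for lhc is incomplete. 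A second, more minor, point: your upper hemicontinuity claim ``closed graph $\Rightarrow$ uhc'' is not valid without compactness of the range (Proposition \ref{alipran} Item 2); the paper handles this by restricting to compact subsets $\mathcal{K}^0$ of $\Pi^0$ and extending via the Hausdorff-metric characterization of continuity, and your argument needs the same repair, though your tightness estimates do make it available.
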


\begin{remark}\label{rem:conditionalexp}{\rm
Recall that a strong solution of the MFG is a probability measure $\mathbbm{P}^*\in \mathcal{P}^{*}(\pi^*)$, for some $\pi^*\in\Pi^0$, such that  $M= \mathbbm{P}^*\circ(X|\mathcal{F}^{W^0})^{-1}$ $\mathbbm{P}^*$ a.s., or equivalently 
\begin{eqnarray}\label{FixedPoint}
	\pi^* \in
	\Phi\circ\mathcal{P}^{*}(\pi^*)
	&\mbox{where}&
	\Phi(\mathbbm{P})
	:=
	\mathbbm{W}^0\circ\Big(W^0,\, \mathbbm{P}\circ\big(X|\mathcal{F}^{W^0}\big)^{-1}\Big)^{-1}
\end{eqnarray}
If the map $\Phi$ were continuous, then we may conclude from Proposition \ref{PropReg} that such a fixed point exists, by the Kakutani fixed point theorem, see Theorem \ref{Kakutani}. Unfortunately, the conditional expectation operator is not continuous, in general. For this reason, the proof strategy used in \cite{LackerCommonNoise} consists in introducing a discretization of the common noise $W^0$, so as to reduce the fixed point problem to the context of a finite $\sigma$-field where the conditional expectation is indeed continuous. The weak solution of the MFG is then obtained as a limiting point of the solutions of the MFG problems with finite approximation of the common noise. See Section \ref{sec:discretenoise} below.
}
\end{remark}

\begin{prooff}{}
{\bf \hspace{-5mm} of Proposition \ref{PropReg} (i)}
	By the Lévy characterization, $W^0$ is an $\mathbbm{F}^{W^0,M}$-Brownian motion iff $W^0$ and $W^0_t(W^0_t)^{\intercal}-tId_{p_0}$ are martingales. As  the set of solutions of a Martingale problem is convex, see Corollary 11.10 in \cite{jacod79}, we immediately deduce that $\Pi^0$ is convex.

	We now show that $\Pi^0$ is closed.  Assume that a sequence $(\pi^0_n)_{n\in\mathbbm{N}}$ of elements of $\Pi^0$ converges weakly to some $\pi^0$. By the Lévy criterion, we have for all $s\leq t\in[0,T]$ and all bounded continuous $\mathcal{F}^{W^0,M}_s$-measurable $\phi_s$,
	\begin{equation}\label{EW0}
	\mathbbm{E}^{\pi^0_n}[(W^0_t-W^0_s)\phi_s]=0  \text{ and }
	\mathbbm{E}^{\pi^0_n}[(W^0_t(W^0_t)^{\intercal}-W^0_s(W^0_s)^{\intercal} -(t-s)Id_m)\phi_s]=0.
	\end{equation}
As $(W^0_t-W^0_s)\phi_s$ and $(W^0_t(W^0_t)^{\intercal}-W^0_s(W^0_s)^{\intercal} -(t-s)Id_m)\phi_s$ are continuous uniformly integrable r.v. under $(\pi^0_n)_n$, we may send $n$ to infinity in \eqref{EW0} and obtain that $W^0$ is a $(\pi^0,\mathbbm{F}^{W^0,M})$-Brownian motion, see \cite{billingsley86} Theorem 3.5. 
\end{prooff}	

\begin{prooff}{}
{\bf \hspace{-5mm} of Proposition \ref{PropReg} (iii)}
We now show that (iii) is a consequence of (ii), whose proof is postponed. As $f,\xi$ are bounded continuous, the map $J$ introduced in \eqref{J}  is continuous on $\mathfrak{M}^1_+(\Omega)$. Then, since  $\mathcal{P}$ is continuous with nonempty compact values, it follows directly by Theorem \ref{Berge}  that $\mathcal{P}^*$ is upper hemicontinuous and takes nonempty compact values. 

We next show that it takes convex values. Let  $\pi^0\in \Pi^0$, $\mathbbm{P}^1,\mathbbm{P}^2$ be elements of $\mathcal{P}^{\star}(\pi^0)$ i.e maximizers of $\mathbbm{E}^{\mathbbm{P}}[J]$ within $\mathcal{P}(\pi^0)$ and let $\alpha\in[0,1]$. Since $\mathcal{P}$ takes convex values then $\alpha\mathbbm{P}^1+(1-\alpha)\mathbbm{P}^2\in\mathcal{P}(\pi^0)$ and since $\mathbbm{E}^{\mathbbm{P}^1}[J]
=\mathbbm{E}^{\mathbbm{P}^2}[J]= 	\max_{\mathbbm{P}\in \mathcal{P}(\pi^0)}\mathbbm{E}^{\mathbbm{P}}[J]$, it follows that $\mathbbm{E}^{\alpha\mathbbm{P}^1+(1-\alpha)\mathbbm{P}^2}[J]= \max_{\mathbbm{P}\in \mathcal{P}(\pi^0)}$. Hence  $\alpha\mathbbm{P}^1+(1-\alpha)\mathbbm{P}^2$ also is a maximizer of $\mathbbm{E}^{\mathbbm{P}}[J]$ within $\mathcal{P}(\pi^0)$ and therefore belongs to $\mathcal{P}^*(\pi^0)$.

It remains to prove that $\mathcal{P}^*(\Pi^0)$ is closed. Since $\mathcal{P}^*$ is uhc and compact valued, then it has a closed graph, see Proposition \ref{alipran} Item 1. Now let $\mathbbm{P}^n\longrightarrow\mathbbm{P}$ with $\mathbbm{P}^n\in\mathcal{P}^*(\Pi^0)$ for all $n$. By construction of $\mathcal{P}^*$, we have that for all $n$, $\mathbbm{P}^n\in\mathcal{P}^*(\mathbbm{P}^n\circ(M,W^0)^{-1})$, and by continuity of marginals, that $\mathbbm{P}^n\circ(M,W^0)^{-1}$ tends to $\mathbbm{P}\circ(M,W^0)^{-1}$ which belongs to $\Pi^0$ by the closedness property established in (i) of the present proof. So by the closed graph property, $\mathbbm{P}\in\mathcal{P}^*(\mathbbm{P}\circ(M,W^0)^{-1})\subset\mathcal{P}^*(\Pi^0)$
and the proof is complete.
\end{prooff}
The rest of this section is dedicated to the proof of Proposition \ref{PropReg} (ii). We start with an immediate consequence of Proposition \ref{PropReg} (i).
\begin{corollary}\label{L_K_W_mu}
Let $\Pi:=\{\pi:=\mathbbm{W}^1\otimes\pi^0:~\pi^0\in\Pi^0 \}$. Then,
\\
{\rm (i)} $\Pi$ is a closed convex subset of $\mathfrak{M}_+^1(\mathcal{W}\times\mathfrak{M}_+^1(\mathcal{X}))$;
\\
{\rm (ii)} the map $\mathbf{T}:
		\pi^0\in\Pi^0\longmapsto\pi:=\mathbbm{W}^1\otimes\pi^0\in \Pi$ is a homeomorphism;
\\
{\rm (iii)} if $\mathcal{K}^0$ is a compact (resp. convex) subset of $\Pi^0$, then $\mathcal{K}:=\mathbf{T}(\mathcal{K}^0)$ is a compact (resp. convex) subset of $\Pi$.
\end{corollary}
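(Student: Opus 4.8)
The plan is to reduce all three assertions to elementary properties of the tensoring map $\mathbf{T}$, proving (ii) first and then deducing (i) and (iii) from it together with Proposition \ref{PropReg} (i). First I would record two structural facts about $\mathbf{T}:\pi^0\mapsto \mathbbm{W}^1\otimes\pi^0$. It is \emph{affine}: for $\alpha\in[0,1]$ one has $\mathbbm{W}^1\otimes(\alpha\pi^0_1+(1-\alpha)\pi^0_2)=\alpha(\mathbbm{W}^1\otimes\pi^0_1)+(1-\alpha)(\mathbbm{W}^1\otimes\pi^0_2)$. And it is a \emph{bijection onto $\Pi$}: surjectivity holds by the very definition of $\Pi$, while injectivity follows because $\pi^0$ is recovered from $\pi=\mathbf{T}(\pi^0)$ as the marginal $\pi\circ(W^0,M)^{-1}$; thus $\mathbf{T}^{-1}$ is exactly the marginalization map $\pi\mapsto \pi\circ(W^0,M)^{-1}$.

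The core of the argument, and the only genuinely analytic point, is the bicontinuity of $\mathbf{T}$. For continuity, let $\pi^0_n\to\pi^0$ weakly and fix a bounded continuous $\phi$ on $\mathcal{W}^1\times\mathcal{W}^0\times\mathfrak{M}_+^1(\mathcal{X})$. Setting $\psi(w^0,\mu):=\int_{\mathcal{W}^1}\phi(w^1,w^0,\mu)\,\mathbbm{W}^1(dw^1)$, dominated convergence (with bound $\|\phi\|_\infty$) shows that $\psi$ is bounded continuous on $\mathcal{W}^0\times\mathfrak{M}_+^1(\mathcal{X})$, so by Fubini $\int\phi\,d\mathbf{T}(\pi^0_n)=\int\psi\,d\pi^0_n\to\int\psi\,d\pi^0=\int\phi\,d\mathbf{T}(\pi^0)$, i.e. $\mathbf{T}(\pi^0_n)\to\mathbf{T}(\pi^0)$ weakly. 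Continuity of $\mathbf{T}^{-1}$ is the standard continuity of marginalization: pushing forward under the continuous coordinate projection preserves weak convergence. Hence $\mathbf{T}$ is a homeomorphism onto $\Pi$, which is (ii).

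Assertions (i) and (iii) then follow formally. Convexity of $\Pi=\mathbf{T}(\Pi^0)$ and of $\mathcal{K}=\mathbf{T}(\mathcal{K}^0)$ is immediate from affineness of $\mathbf{T}$ together with convexity of $\Pi^0$ (resp. $\mathcal{K}^0$); compactness of $\mathcal{K}$ is the continuous image of the compact $\mathcal{K}^0$. For closedness of $\Pi$, take $\pi_n=\mathbf{T}(\pi^0_n)\to\pi$; marginalization gives $\pi^0_n=\pi_n\circ(W^0,M)^{-1}\to\pi^0:=\pi\circ(W^0,M)^{-1}$, and $\pi^0\in\Pi^0$ by the closedness part of Proposition \ref{PropReg} (i). Continuity of $\mathbf{T}$ then yields $\mathbf{T}(\pi^0_n)\to\mathbf{T}(\pi^0)$, and uniqueness of weak limits forces $\pi=\mathbf{T}(\pi^0)\in\Pi$. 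The only place requiring care is the continuity of $\mathbf{T}$, i.e. that tensoring with the fixed $\mathbbm{W}^1$ is weakly continuous; once that Fubini--dominated-convergence step is in hand, everything else is a formal consequence of Proposition \ref{PropReg} (i) and the uniqueness of weak limits.
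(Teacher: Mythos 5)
Your proposal is correct, and it matches the paper's intent: the paper states this corollary without proof, as an ``immediate consequence'' of Proposition \ref{PropReg} (i), and your argument simply supplies the details of that intended route --- namely that $\mathbf{T}$ is affine and bicontinuous (tensoring with the fixed $\mathbbm{W}^1$ via Fubini and dominated convergence, inverse given by marginalization), so that closedness, convexity and compactness transfer from $\Pi^0$ to $\Pi$ and from $\mathcal{K}^0$ to $\mathcal{K}$. No gaps; the key analytic step (weak continuity of $\pi^0\mapsto\mathbbm{W}^1\otimes\pi^0$) and the use of uniqueness of weak limits for closedness are both handled correctly.
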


We next consider a further extension of the probability measures $\pi\in\Pi$:
 $$
 \mathfrak{Q}_c(\pi):=\big\{\mathbbm{Q}\in\mathfrak{M}_+^1\big(\mathcal{Q}\times \mathcal{W}\times \mathfrak{M}_+^1(\mathcal{X})\big): \mathbbm{Q}\circ(W,M)^{-1}= \pi~\mbox{and}~ \mathbbm{Q}~\mbox{satisfies}~\eqref{causality}\big\},
 $$
where the subscript ``c'' stands for the causality condition \eqref{causality}.

\begin{lemma}\label{L2}
{\rm (i)} $\mathfrak{Q}_c(\Pi)$ is closed convex;
\\
{\rm (ii)} Let $\mathcal{K}^0$ be a compact (resp. convex) subset  of $\Pi^0$, and set $\mathcal{K}:=\mathbf{T}(\mathcal{K}^0)$; then $\mathfrak{Q}_c(\mathcal{K})$ is a compact (resp. convex) subset of $\mathfrak{Q}_c(\Pi)$; 
\\
{\rm (iii)} the correspondence $\mathfrak{Q}_c:\pi\in\Pi\longmapsto\mathfrak{Q}_c(\pi)$ is continuous.
\end{lemma}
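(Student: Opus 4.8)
\emph{Overall approach.} The plan is to first recast the causality condition \eqref{causality} in a linear, martingale-problem form, which trivializes the convexity and closedness statements, and then to treat continuity, whose lower hemicontinuity is the genuine difficulty. I would begin by recording two structural facts: every $\mathbbm{Q}\in\mathfrak{Q}_c(\Pi)$ has the \emph{same} $\mathcal{W}$-marginal $\mathbbm{W}:=\mathbbm{W}^1\otimes\mathbbm{W}^0$ (since $W^1\sim\mathbbm{W}^1$ is independent of $(W^0,M)$ and $W^0$ is a $\pi^0$-Brownian motion), and $\mathcal{Q}$ is compact, being the set of measures on the compact space $[0,T]\times A$ with first marginal the Lebesgue measure. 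The key reformulation is that, under the marginal constraint $\mathbbm{Q}\circ(W,M)^{-1}\in\Pi$, the process $W$ is automatically an $\mathbbm{F}^{W,M}$-Brownian motion, and \eqref{causality} is \emph{equivalent} to requiring that $W$ be a Brownian motion in the enlarged filtration $\mathbbm{F}^{Q,W}$: one direction uses independence of the future increments of $W$ from $\mathcal{F}^Q_t\vee\mathcal{F}^W_t$, the converse the identity $\mathbbm{E}[g|\mathcal{F}^W_T]=\mathbbm{E}[g|\mathcal{F}^W_t]$ for bounded $\mathcal{F}^Q_t$-measurable $g$ that \eqref{causality} encodes. By Lévy's characterization this is a martingale problem, hence a family of \emph{linear} constraints on $\mathbbm{Q}$.

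\emph{Items (i) and (ii).} Convexity in both items is then immediate, exactly as in Proposition \ref{PropReg}(i): the marginal constraint is affine and $\Pi$ (resp. $\mathcal{K}$) is convex by Corollary \ref{L_K_W_mu}, while the martingale conditions encoding causality are linear in $\mathbbm{Q}$. Closedness of $\mathfrak{Q}_c(\Pi)$ follows by a weak limiting argument along the same lines: marginals converge and $\Pi$ is closed, while the martingale conditions pass to the limit by testing against bounded continuous $\mathcal{F}^{Q,W}_s$-measurable functionals and using uniform integrability, precisely as in the proof of Proposition \ref{PropReg}(i). For compactness in (ii), tightness is automatic: the $\mathcal{Q}$-marginals lie in the compact set $\mathcal{Q}$, and the $\mathcal{W}\times\mathfrak{M}_+^1(\mathcal{X})$-marginals in the compact set $\mathcal{K}$; Prokhorov together with closedness then gives compactness.

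\emph{Item (iii), upper hemicontinuity.} The values are nonempty, as $\pi\otimes\nu\in\mathfrak{Q}_c(\pi)$ for any law $\nu$ on $\mathcal{Q}$ (the product makes $Q$ independent of $W$, so \eqref{causality} holds trivially). The graph of $\mathfrak{Q}_c$ is closed by the limiting argument above. Given $\pi_n\to\pi$ and $\mathbbm{Q}_n\in\mathfrak{Q}_c(\pi_n)$, I would set $\mathcal{K}^0:=\{\pi^0_n\}_n\cup\{\pi^0\}$, which is compact since $\mathbf{T}$ is a homeomorphism, and $\mathcal{K}:=\mathbf{T}(\mathcal{K}^0)$; then all $\mathbbm{Q}_n$ lie in the compact set $\mathfrak{Q}_c(\mathcal{K})$ of (ii), so a subsequence converges to some $\mathbbm{Q}$, which belongs to $\mathfrak{Q}_c(\pi)$ by the closed graph. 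This is the sequential characterization of uhc for compact-valued correspondences (Theorem \ref{Berge}, Proposition \ref{alipran}).

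\emph{Item (iii), lower hemicontinuity --- the main obstacle.} Here, given $\mathbbm{Q}\in\mathfrak{Q}_c(\pi)$ and $\pi_n\to\pi$, I must build $\mathbbm{Q}_n\in\mathfrak{Q}_c(\pi_n)$ with $\mathbbm{Q}_n\to\mathbbm{Q}$. I would disintegrate over the common $\mathcal{W}$-marginal, $\mathbbm{Q}=\mathbbm{W}(dw)\lambda_w(dq,dm)$; then causality is equivalent to the $Q$-marginal $\ell_w$ of $\lambda_w$ being \emph{adapted} (its restriction to $\mathcal{F}^Q_t$ depending on $w$ only through $\mathcal{F}^W_t$). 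The natural construction keeps this causal $Q\,|\,W$-kernel fixed and transports only the $M$-component so as to match $\pi_n=\mathbbm{W}(dw)\rho^n_w(dm)$: choosing couplings $\tau^n_{w,m}(dm')$ of the conditional laws $\rho_w$ and $\rho^n_w$ of $M$ given $W$, and setting $\mathbbm{Q}_n:=\mathbbm{W}(dw)\int_m\lambda_w(dq,dm)\,\tau^n_{w,m}(dm')$, one checks that $\mathbbm{Q}_n$ has $(W,M)$-marginal $\pi_n$, unchanged adapted $Q\,|\,W$-law $\ell_w$ (hence causality), and converges to $\mathbbm{Q}$ provided the couplings concentrate on the diagonal on average. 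I expect the crux to be exactly this last point: it amounts to the averaged convergence of the conditional laws of $M$ given $W$, which is \emph{false} for arbitrary weakly convergent $\pi_n$ (oscillating conditionals keep the transport cost bounded away from $0$), and which must be recovered from the defining non-anticipativity of $\Pi^0$ --- the requirement that $W^0$ be an $\mathbbm{F}^{W^0,M}$-Brownian motion forbids precisely the anticipating oscillations that would destroy conditional convergence. Turning this into the construction of near-diagonal couplings sharing the $W$-coordinate, out of weak convergence in $\Pi^0$, is the decisive and delicate step.
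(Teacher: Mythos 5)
Your treatment of items (i) and (ii), and of the upper hemicontinuity half of (iii), is correct and essentially coincides with the paper's proof: you use the same reformulation of the causality condition \eqref{causality} as the requirement that $W$ be a Brownian motion in the enlarged filtration $\mathbbm{F}^{Q,W}$ (the paper invokes Theorem 3.11 of \cite{LackerCompatibility} for exactly this equivalence), the same stability of these constraints under convex combinations and weak limits, the same tightness argument (compactness of $\mathcal{Q}$ plus tightness of $\mathcal{K}$), and a closed-graph-plus-compactness argument for upper hemicontinuity which is also how the paper proceeds.

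The genuine gap is lower hemicontinuity, and you flag it yourself. Worse, the repair you hope for is provably unavailable. Your construction freezes the conditional law of $Q$ given $W$ and transports only the $M$-component, which forces the conditional laws of $M$ given $W$ under $\pi_n$ to converge to those under $\pi$ in an averaged transport sense; you then hope that the non-anticipativity defining $\Pi^0$ rules out the oscillations that defeat such convergence. It does not. Let $y^{\pm}\in\mathcal{X}$ be the paths $y^{\pm}_t=\pm(t-T/2)^+$, and let $\pi^0_n$ be the law of $\big(W^0,\delta_{y^{s_n}}\big)$ where $s_n:=\mathrm{sign}\big(\sin(n W^0_{T/2})\big)$. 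Since $y^+$ and $y^-$ coincide on $[0,T/2]$, the random measure $M=\delta_{y^{s_n}}$ satisfies $\mathcal{F}^{M}_t\subset\mathcal{F}^{W^0}_t$ for every $t$ (the sign is only ``used'' by $M$ after it has been revealed by $W^0$), so $W^0$ remains an $\mathbbm{F}^{W^0,M}$-Brownian motion and $\pi^0_n\in\Pi^0$. By a standard equidistribution (Riemann--Lebesgue) argument, $\pi^0_n$ converges weakly to the law $\pi^0$ of $\big(W^0,\delta_{y^{\epsilon}}\big)$ with $\epsilon$ an independent symmetric sign, which also lies in $\Pi^0$. Yet under $\pi^0_n$ the conditional law of $M$ given $W^0$ is a Dirac mass, while under $\pi^0$ it is the symmetric two-point mixture; the averaged transport distance between the two conditional kernels stays bounded away from zero, so no near-diagonal couplings exist. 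Hence lower hemicontinuity cannot be reached along the route you describe, and your proposal is incomplete at precisely the step that carries all the difficulty of the lemma.

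The paper's proof of (iii) avoids conditional kernels altogether. It factors $\mathfrak{Q}_c=\Gamma_2\circ\Gamma_1$, where $\Gamma_1(\pi)=\{\pi\}\times\mathcal{K}'$, with $\mathcal{K}'$ the compact convex set of laws on $\mathcal{Q}\times\mathcal{W}$ under which $W$ is an $\mathbbm{F}^{Q,W}$-Brownian motion, and where $\Gamma_2(\pi,\pi')$ is the set of joint laws with the two prescribed overlapping marginals. Lower hemicontinuity of $\Gamma_2$ is then deduced not from any coupling construction but from an open mapping property of the double-marginal map $\psi:\mathbbm{Q}\longmapsto\big(\mathbbm{Q}\circ(Q,W)^{-1},\mathbbm{Q}\circ(W,M)^{-1}\big)$, adapting Theorem 3 of \cite{eifler} and invoking Theorem 17.7 of \cite{aliprantis}; upper hemicontinuity on compacts follows from the closed graph, and continuity everywhere from the Hausdorff-metric argument. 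This open-mapping ingredient is the idea missing from your proposal; note that the obstruction you identified (and which the example above makes concrete) is exactly what that step must absorb, since it concerns the behaviour of gluings of overlapping marginals in which $Q$ and $M$ are correlated.
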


\begin{proof}
Throughout this proof, we denote $\mathfrak{Q}:=\big\{\mathbbm{Q}\in\mathfrak{M}_+^1\big(\mathcal{Q}\times \mathcal{W}\times \mathfrak{M}_+^1(\mathcal{X})\big): \mathbbm{Q}\circ(W,M)^{-1}\in \Pi~\mbox{and}~ \mathbbm{Q}~\mbox{satisfies}~\eqref{causality}\big\}=\mathfrak{Q}_c(\Pi)$.
\\
(i) Since $\Pi$ is itself convex and closed by Corollary \ref{L_K_W_mu}, then the first item above is stable by convergence or convex combinations. By Theorem 3.11 in \cite{LackerCompatibility}, since $W$ has independent increments (with respect to its own filtration), the second item above holds iff  for all $t\leq s$, $W_t-W_s$ is $\mathbbm{Q}$-independent of $\mathcal{F}^{Q,W}_s$. This condition is also stable under convergence or convex combinations, so $\mathfrak{Q}$ is closed and convex. 

(ii) Closeness and convexity of $\mathfrak{Q}_c(\mathcal{K})$ follow from the same arguments as above.
Its tightness (hence relative compactness) follows from the compactness of $\mathcal{Q}$ and the tightness of $\{\mathbbm{Q}\circ(W,M)^{-1}:\,\mathbbm{Q}\in\mathfrak{Q}_c(\mathcal{K})\}=\mathcal{K}$.

(iii) We decompose $\mathfrak{Q}_c$ as the composition of two continuous correspondences $\Gamma_1,\Gamma_2$ which we now introduce.
We denote $\mathcal{K}':=\{\mathbbm{Q}\circ(Q,W)^{-1}:\mathbbm{Q}\in\mathfrak{Q}\}$ i.e. the set of laws in $\mathfrak{M}_+^1(\mathcal{Q}\times\mathcal{W})$ for which $W$ is an $\mathbbm{F}^{Q,W}$-Brownian motion. With arguments similar to what we have seen for $\Pi^0$ or $\Pi$, it is easy to see that $\mathcal{K}'$ is closed convex, and is even compact thanks to the compactness of $\mathcal{Q}$.

We define  the correspondence $\Gamma_1$ which to any $\pi\in\Pi$ maps the subset $\{\pi\}\times \mathcal{K}'$ of $\Pi\times \mathcal{K}'$.
We also define $ \Gamma_2$ which to any $(\pi,\pi')$ in $\Pi\times \mathcal{K}'$ maps the set 
 $$
 \left\{\mathbbm{Q}\in\mathfrak{Q}: \mathbbm{Q}\circ(Q,W)^{-1}=\pi',\quad
                                                                  \mathbbm{Q}\circ(W,M)^{-1}=\pi\right\}.
 $$
It is clear that  $\mathfrak{Q}_c=\Gamma_2\circ\Gamma_1$.

$\Gamma_1$ is the product of the continuous function $\pi\longmapsto\pi$ and of the  correspondence $\pi\longmapsto \mathcal{K}'$ which is compact valued and constant hence continuous, apply Proposition \ref{PropReg} Item 3 for instance. So $\Gamma_1$ is continuous as the product of continuous compact valued correspondences, see Theorem 17.28 in \cite{aliprantis}.

$\Gamma_2$ is the restriction on $\Pi\times \mathcal{K}'$ of the inverse $\psi^{-1}$ of the mapping 
$\psi:\mathbbm{Q}\longmapsto (\mathbbm{Q}\circ(Q,W)^{-1},\mathbbm{Q}\circ(W,M)^{-1})$. Adapting Theorem 3 in \cite{eifler} for example, we have that $\psi$ is an open mapping. Then, by Theorem 17.7 in \cite{aliprantis},  $\psi^{-1}$ (or its restriction $\Gamma_2$) is lower hemicontinuous. It is immediate that $\Gamma_2$ has a closed graph, however, its range is not compact so we can not conclude immediately that it is uhc.

Let us fix some compact subset $\mathcal{K}^0$ of $\Pi^0$ and $\Gamma_2^{\mathcal{K}^0}$ the restriction of $\Gamma_2$ on $\mathcal{K}^0$. Then $\Gamma_2^{\mathcal{K}^0}$ is still lhc with closed graph but this time has compact range hence is uhc by the closed graph theorem, see Proposition \ref{PropReg} (ii). It is therefore continuous. $\Gamma_2$ is compact valued hence can also be seen as a function with values in the metric space of compact subsets of $\mathfrak{M}_+^1(\mathcal{Q}\times\mathcal{W}\times\mathfrak{M}_+^1(\mathcal{X}))$, equipped with the Hausdorff metric. By Proposition \ref{PropReg} (iii), $\Gamma_2$ is continuous on a certain set as a correspondence, iff it is continuous as a function for the Hausdorff metric. What we have seen is that $\Gamma_2$ is in fact continuous on every compact subset of $\Pi\times \mathcal{K}'$, and in a metric space, a function which is continuous on every compact set is continuous everywhere. So $\Gamma_2$ is continuous everywhere, hence $\mathfrak{Q}_c$ is continuous as the composition of continuous correspondences, see Proposition \ref{alipran} Item 4.
\end{proof}

We finally lift the set $\mathfrak{Q}_c(\Pi)$ by the map
 $$
 \mathbbm{Q}\in\mathfrak{Q}_c(\Pi)
 \longmapsto
 \Psi(\mathbbm{Q}) := \mathbbm{P}\in\mathcal{P}(\Pi^0)
 ~\mbox{if and only if}~
 \mathbbm{P}\circ (Q,W,M)^{-1}=\mathbbm{Q},
 $$
where the existence and uniqueness of $\mathbbm{P}$ is guaranteed by Assumption \ref{H_existence}.

\begin{lemma}\label{L3}
{\rm (i)} $\mathcal{P}(\Pi^0)$ is a closed convex subset of $\mathfrak{M}_+^1(\Omega)$, and $\mathcal{P}(\mathcal{K}^0)$ is compact (resp. convex) for all compact (resp. convex) subset $\mathcal{K}^0$ of $\Pi^0$;
\\
{\rm (ii)} $\Psi$ is a homeormorphism from $\mathfrak{Q}_c(\Pi)$ to $\mathcal{P}(\Pi^0)$.
\end{lemma}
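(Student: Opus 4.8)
The plan is to establish the homeomorphism property (ii) first, and then to read off (i) by transporting through $\Psi$ the topological properties of $\mathfrak{Q}_c$ obtained in Lemma \ref{L2}. Throughout, write $\rho:\mathbbm{P}\longmapsto\mathbbm{P}\circ(Q,W,M)^{-1}$ for the marginal map, so that by definition $\Psi$ inverts $\rho$ and $\Psi(\mathbbm{Q})$ is characterized, among all lifts of $\mathbbm{Q}$, by Item 1 of Definition \ref{def_admissible} (ii), existence and uniqueness being granted by Assumption \ref{H_existence} (ii). First I would check that $\Psi$ is a bijection from $\mathfrak{Q}_c(\Pi)$ onto $\mathcal{P}(\Pi^0)$. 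For $\mathbbm{Q}\in\mathfrak{Q}_c(\Pi)$ with $\mathbbm{Q}\circ(W,M)^{-1}=\mathbbm{W}^1\otimes\pi^0\in\Pi$, the lift $\mathbbm{P}=\Psi(\mathbbm{Q})$ satisfies Item 1 by construction, has $(W^0,M)$-marginal $\pi^0\in\Pi^0$, inherits Item 2 and the membership $\mathbbm{P}\circ(W,M)^{-1}\in\Pi$ from the product structure of $\mathbbm{W}^1\otimes\pi^0$, and satisfies the causality condition \eqref{causality} because the latter constrains only the $(Q,W)$-marginal, which is that of $\mathbbm{Q}$; hence $\mathbbm{P}\in\mathcal{P}(\pi^0)\subset\mathcal{P}(\Pi^0)$. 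Injectivity is clear since $\rho\circ\Psi=\mathrm{id}$, and for surjectivity, given $\mathbbm{P}\in\mathcal{P}(\pi^0)$ I set $\mathbbm{Q}=\rho(\mathbbm{P})$ and verify $\mathbbm{Q}\in\mathfrak{Q}_c(\Pi)$: causality is inherited, and the one point needing care is that $\mathbbm{P}\circ(W,M)^{-1}\in\Pi$, i.e. that $W^1$ is independent of $(W^0,M)$. This combines Item 2 with the fact that $W=(W^1,W^0)$ is a $\mathbbm{P}$-Brownian motion, so that $W^1\perp W^0$, together with the compatibility structure in the spirit of \cite{LackerCompatibility}. Uniqueness in Assumption \ref{H_existence} (ii) then yields $\Psi(\mathbbm{Q})=\mathbbm{P}$.

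Next I would record two easy facts. The map $\rho=\Psi^{-1}$ is a pushforward by a continuous coordinate projection, hence weakly continuous, which settles continuity of $\Psi^{-1}$. Moreover $\Psi$ is affine: the solution set of the martingale problem of Item 1 is convex, the defining martingale identities being linear in the measure, so for $\mathbbm{Q}_1,\mathbbm{Q}_2\in\mathfrak{Q}_c(\Pi)$ and $\alpha\in[0,1]$ the combination $\alpha\Psi(\mathbbm{Q}_1)+(1-\alpha)\Psi(\mathbbm{Q}_2)$ still solves Item 1 and has marginal $\alpha\mathbbm{Q}_1+(1-\alpha)\mathbbm{Q}_2$, whence it equals $\Psi(\alpha\mathbbm{Q}_1+(1-\alpha)\mathbbm{Q}_2)$ by uniqueness.

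The hard part is the continuity of $\Psi$, which is a stability statement for the martingale problem. Let $\mathbbm{Q}_n\to\mathbbm{Q}$ in $\mathfrak{Q}_c(\Pi)$ and $\mathbbm{P}_n=\Psi(\mathbbm{Q}_n)$; I would argue by relative compactness plus identification of the limit. Tightness of $\{\mathbbm{P}_n\}$ reduces to tightness of the $X$-marginals, the remaining coordinates carrying the convergent, hence tight, laws $\mathbbm{Q}_n$; and tightness in $\mathcal{C}([0,T],\mathbbm{R}^d)$ follows from the boundedness of $b,\sigma$ in Assumption \ref{H_existence} (i) through the Burkholder--Davis--Gundy and Kolmogorov criteria, e.g. a uniform bound $\mathbbm{E}^{\mathbbm{P}_n}|X_t-X_s|^4\leq C|t-s|^2$. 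Along any weakly convergent subsequence $\mathbbm{P}_n\to\mathbbm{P}'$, continuity of $\rho$ gives $\rho(\mathbbm{P}')=\mathbbm{Q}$, and I pass to the limit in the martingale characterization of Item 1: for $s<t$, $\phi\in\mathcal{C}^2_b$ and bounded continuous $\mathcal{F}_s$-measurable $h$, the functional $\big(\phi(X_t,W_t)-\phi(X_s,W_s)-\int_s^t\int_A\mathcal{A}^{a,X,M}_r\phi(X_r,W_r)Q_r(da)dr\big)h$ is bounded and continuous on $\Omega$ by the continuity of $b,\sigma$ in $(x,a,m)$, so its $\mathbbm{P}_n$-expectation, which vanishes, converges to its $\mathbbm{P}'$-expectation. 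Thus $\mathbbm{P}'$ solves Item 1 with marginal $\mathbbm{Q}$, and uniqueness forces $\mathbbm{P}'=\Psi(\mathbbm{Q})$; since every subsequence admits a further subsequence with this same limit, $\mathbbm{P}_n\to\Psi(\mathbbm{Q})$, proving continuity of $\Psi$ and hence (ii).

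Finally, (i) follows by transport through the homeomorphism $\Psi$. For $\mathcal{K}^0$ compact, set $\mathcal{K}=\mathbf{T}(\mathcal{K}^0)$; by Lemma \ref{L2} (ii) $\mathfrak{Q}_c(\mathcal{K})$ is compact, so its continuous image $\mathcal{P}(\mathcal{K}^0)=\Psi(\mathfrak{Q}_c(\mathcal{K}))$ is compact. Affinity of $\Psi$ turns the convexity of $\mathfrak{Q}_c(\mathcal{K})$ and of $\mathfrak{Q}_c(\Pi)$ (Lemma \ref{L2} (i)--(ii)) into the convexity of $\mathcal{P}(\mathcal{K}^0)$ and of $\mathcal{P}(\Pi^0)$. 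For closedness of $\mathcal{P}(\Pi^0)$, take $\mathbbm{P}_n\in\mathcal{P}(\Pi^0)$ with $\mathbbm{P}_n\to\mathbbm{P}$; then $\rho(\mathbbm{P}_n)\to\rho(\mathbbm{P})=:\mathbbm{Q}$, which lies in $\mathfrak{Q}_c(\Pi)$ by its closedness (Lemma \ref{L2} (i)), and continuity of $\Psi$ gives $\mathbbm{P}=\Psi(\mathbbm{Q})\in\mathcal{P}(\Pi^0)$. The main obstacle is the continuity of $\Psi$ in the third paragraph, the surjectivity independence check being the only other delicate point.
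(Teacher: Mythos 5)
Your proof is correct, but it is organized in the opposite order from the paper's and proves the key continuity statement by a genuinely different method. The paper proves (i) \emph{first} and directly: it characterizes $\mathcal{P}(\Pi^0)$ as the set of laws whose $(Q,W,M)$-marginal lies in $\mathfrak{Q}_c(\Pi)$ and which solve the martingale problem of Item 1 of Definition \ref{def_admissible}, deduces closedness and convexity from Lemma \ref{L2} together with the closedness/convexity of the solution set of a martingale problem with bounded coefficients continuous in $(x,a,m)$, and obtains compactness of $\mathcal{P}(\mathcal{K}^0)$ by tightness (bounded coefficients for the $X$-marginal, compactness of $\mathcal{Q}$, tightness of $\mathbbm{W}^1\otimes\mathcal{K}^0$). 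For (ii) it then uses a purely topological shortcut instead of your stability argument: $\Psi^{-1}$ is the continuous marginal map, a convergent sequence in $\mathfrak{Q}_c(\Pi)$ lies in $\mathfrak{Q}_c(\mathcal{K})$ for some compact $\mathcal{K}^0\subset\Pi^0$, and the restriction of $\Psi^{-1}$ is then a continuous bijection between the compact sets $\mathfrak{Q}_c(\mathcal{K})$ and $\mathcal{P}(\mathcal{K}^0)$, hence a homeomorphism, which yields continuity of $\Psi$. You instead prove (ii) first by the classical stability-of-martingale-problems route (tightness of the lifted laws, identification of every weak limit point through the martingale characterization, then uniqueness from Assumption \ref{H_existence} (ii)), and transport compactness, convexity and closedness through $\Psi$ using its affinity. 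The two arguments rest on the same technical pillars: your tightness estimate is the one the paper uses for compactness in (i), and your limit passage in the martingale identity (continuity of $(x,q,w,m)\mapsto\int\!\!\int\mathcal{A}^{a,x,m}_r\phi\,q_r(da)dr$, which for the $q$-variable is really a stable-convergence fact, legitimate here because all elements of $\mathcal{Q}$ have Lebesgue time-marginal) is exactly what the paper invokes, at the same level of detail, to assert closedness of the martingale-problem constraint in (i). So the total work is comparable; the paper's compact-bijection trick spares the subsequence bookkeeping, while your route is more self-contained, does not need (i) as an input, and yields affinity of $\Psi$ as a by-product.

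One caveat, on the only step where you attempt more than the paper. Your surjectivity argument claims that $\mathbbm{P}\circ(W,M)^{-1}=\mathbbm{W}^1\otimes\pi^0$ follows from Item 2 ($M\perp W^1$) together with $W^1\perp W^0$; as stated this is only pairwise independence, which does not by itself imply the required joint independence of $W^1$ from the pair $(W^0,M)$, and the gap is not closed by a generic appeal to compatibility: the conditions of Definition \ref{def_admissible} constrain only the information of $M$ revealed through $\mathbbm{F}^M$ relative to future increments of $W$, so joint independence is a genuinely stronger statement. The paper sidesteps the issue entirely by declaring that ``it is clear that $\Psi$ is a bijection'', i.e. it implicitly reads the product structure of the $(W,M)$-marginal into admissibility (this is also how $\mathcal{P}=\Psi\circ\mathfrak{Q}_c\circ\mathbf{T}$ is used in the proof of Proposition \ref{PropReg} (ii)). So your treatment does not fall below the paper's standard, but you should be aware that this identification is a real subtlety of the definitions rather than a consequence of pairwise independence.
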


\begin{proof}
(i) 	By definition, $\mathbbm{P}\in\mathfrak{M}_+^1(\Omega)$ belongs to $\mathcal{P}(\Pi^0)$ iff
	\begin{enumerate}
		\item[a.] $\mathbbm{P}\circ(Q,W,M)^{-1}$ belongs to $\mathfrak{Q}_c(\Pi)$
		\item[b.] for all $\phi\in\mathcal{C}^2_b(\mathbbm{R}^d\times\mathbbm{R}^{p})$, 
		$$\phi(X_{t},W_{t})-\int_0^{t}\int_A \mathcal{A}^{a,X,M}_r\phi(X_r,W_r)\, Q_r(da)dr,\quad t\in[0,T]$$ is a $(\mathbbm{P},\mathbbm{F})$-martingale.
	\end{enumerate} 
As $\mathfrak{Q}_c(\Pi)$ is convex and closed by Lemma \ref{L2}, it is clear that the set of $\mathbbm{P}$ verifying Item 1 above is convex and closed.
Then, since the set of solutions of a martingale problem is convex (see Corollary 11.10 in \cite{jacod79}), and since the coefficients $b,\sigma$ are bounded and continuous in $(x,a,m)$ for fixed $t$, the set of probability measures verifying Item 2 above is also closed convex. This shows that $\mathcal{P}(\Pi^0)$ is closed convex. 

We next prove the second part of (i). We fix some compact convex subset $\mathcal{K}^0$ of $\Pi^0$. It is immediate by construction that $\mathcal{P}(\mathcal{K}^0)$ remains closed convex, so we are left to prove that it is relatively compact.
By boundedness of $b,\sigma$, the set $\{\mathbbm{P}\circ X^{-1}:\mathbbm{P}\in\mathcal{P}(\mathcal{K}^0)\}$ is tight (see Theorem 1.4.6 in \cite{stroock} for instance), and by compactness of $\mathcal{Q}$ and tightness of $\mathbbm{W}^1\otimes\mathcal{K}^0$  we  have  that $\{\mathbbm{P}\circ (Q,W,M)^{-1}:\mathbbm{P}\in\mathcal{P}(\mathcal{K}^0)\}$ is tight. So  $\mathcal{P}(\mathcal{K}^0)$ is tight and therefore relatively compact which concludes the proof.
\\
(ii) It is clear that $\Psi$ is a bijection, an that its reciprocal $\Psi^{-1}$ (defined by $\Psi^{-1}(\mathbbm{P})=\mathbbm{P}\circ (Q,W,M)^{-1}$) is continuous. 
	
	Let $\mathbbm{P}_n\longrightarrow \mathbbm{P}$ in $\mathfrak{Q}_c(\Pi)$ then we also have $\mathbbm{P}_n\circ(M,W^0)^{-1}\longrightarrow \mathbbm{P}\circ(M,W^0)^{-1}$ so the measures $(\mathbbm{P}_n\circ(M,W^0)^{-1})_n$ and $\mathbbm{P}\circ(M,W^0)^{-1}$ belong to some compact subset $\mathcal{K}^0$ of $\Pi^0$ and the measures $(\mathbbm{P}_n)_n$ and $\mathbbm{P}$ belong to $\mathfrak{Q}_c(\mathcal{K})$ where $\mathcal{K}:=\mathbbm{W}^1\otimes\mathcal{K}^0$. So it is enough to show that $\Psi$ is continuous on $\mathfrak{Q}_c(\mathcal{K})$ for any compact subset $\mathcal{K}^0$ of $\Pi^0$.
	
	We fix $\mathcal{K}^0$ and $\mathcal{K}:=\mathbbm{W}^1\otimes\mathcal{K}^0$. By construction, the restriction of $\Psi$ induces a bijection $\Psi_{\mathcal{K}^0}$ from  $\mathfrak{Q}_c(\mathcal{K})$ onto $\mathcal{P}(\mathcal{K}^0)$ which are both compact, see Lemma \ref{L2} and the first part (i) of the present lemma.
	$\Psi_{\mathcal{K}^0}^{-1}$ is the marginal mapping $\mathbbm{P}\mapsto\mathbbm{P}\circ(Q,W,M)^{-1}$ restricted on $\mathcal{P}(\mathcal{K}^0)$ hence is continuous. So $\Psi_{\mathcal{K}^0}^{-1}$ is a continuous bijection between compact sets, hence a homeomorphism. $\Psi_{\mathcal{K}^0}^{-1}$ is therefore continuous, meaning that $\Psi$ is continuous on $\mathfrak{Q}_c(\mathcal{K})$ and the proof is complete.
\end{proof}

We can now conclude the proof of  Proposition \ref{PropReg}.
\begin{prooff}{} {\bf \hspace{-5mm} of Proposition \ref{PropReg} (ii)}
	$\mathcal{P}$ may now be written as the composition $\Psi\circ\mathfrak{Q}_c\circ \mathbf{T}$ where $\mathbf{T}$, $\mathfrak{Q}_c$ and $\Psi$ were respectively introduced in Lemmas \ref{L_K_W_mu}, \ref{L2} and \ref{L3}. So thanks to these three lemmas, $\mathcal{P}$ is a continuous correspondence as the composition of two continuous functions and a continuous correspondence, see Proposition \ref{alipran} Item 4. 
	
	For every $\pi^0\in\Pi^0$, we have that $\mathcal{P}(\pi^0)$ is compact convex by Lemma \ref{L3} (i).
	Finally, $\mathcal{P}$ takes non-empty values thanks to Assumption \ref{H_existence} Item 1.
\end{prooff}

\subsection{Discretized strong equilibria}\label{sec:discretenoise}

This section follows the proof strategy of \cite{LackerCommonNoise} as commented earlier in Remark \ref{rem:conditionalexp}. The main novelty in what follows is our reformulation of the problem given in \eqref{FixedPoint}. Under this perspective, all our analysis is made on the space $\mathfrak{M}_+^1(\Omega)$. We believe that this point of view simplifies some technical issues, and is the key ingredient for allowing the control in the diffusion coefficient.

\begin{notation}\label{Not_discret}
For each $n \geq  1$, let $t^n_i := i2^{-n}T$ for $i = 0, . . . , 2^n$. For every $n$, we fix a partition $c_n := \{C_1^n , \cdots , C_n^n\}$ of $\mathbbm{R}^{p_0}$ into $n$ Borel sets of
strictly positive Lebesgue measure, such that for all $n$, $c_{n+1}$ is a refinement of $c_n$, and $\mathcal{B}(\mathbbm{R}^{p_0}) = \sigma\left(\bigcup_n c_n\right)$. 
For a given $n$, and
$I = (i_1 ,\cdots , i_{2^n} ) \in \{1,\cdots , n\}^{2^n}, k \leq 2^n$, we  define $S_I^{n,k}$ as the set of paths with increments up until time $k$
 in $C_{i_1}^n,\cdots ,C_{i_k}^n$ i.e.
$$S_I^{n,k} := \{\omega^0\in\mathcal{W}^0: \omega^0_{t_j^n} - \omega^0_{t_{j-1}^n} \in C_{i_j}^n , \text{for all } j = 1,\cdots, k\}.$$ We also denote $S_I^{n}:= S_I^{n,2^n}$.
The $S_I^{n}$’s, $I \in \{1,\cdots, n\}^{2^n}$, form a finite partition of $\mathcal{W}^0$ , each $S_I^{n}$  having a strictly
positive $\mathbbm{W}^0$-measure. 

For all $n$ we denote $\mathcal{F}^{n,W^0}:= \sigma(S_I^{n}: I \in \{1,\cdots, n\}^{2^n})$ and 
for all $t\in[0,T]$, we denote $\mathcal{F}^{n,W^0}_t:= \sigma(S_I^{n,j}: I \in \{1,\cdots, n\}^{2^n}, j\leq k_t)$ where $k_t$ is the largest integer such that $t_{k_t}\leq t$. 

Finally, for all $n$, we introduce the mapping
$\hat{X}^n:
\mathcal{X}\longrightarrow\mathcal{X}$
such that for all $k< 2^n$ and $t\in[t_k^n,t_{k+1}^n[$, $\hat{X}^n_t= \frac{2^n}{T}(t-t_k)X_{t_k}+\frac{2^n}{T}(t_{k+1}-t)X_{t_{k-1}}$.

\end{notation}

The following facts may be found in \cite{LackerCommonNoise} Subsection 2.4.2 and the proof of its Lemma 3.6 (second step).
\begin{remark}\label{R_discret}
{\rm (i)} For all $t\in[0,T]$, $\mathcal{F}^{W^0}_t=\sigma\big(\cup_n \mathcal{F}^{n,W^0}_t\big)$;
\\
{\rm (ii)} $(\mathcal{F}^{n,W^0}_t)_{t\geq 0}$ is a sub filtration of $\mathbbm{F}^{W^0}$;
\\
{\rm (iii)} for all $n$, $\hat{X}^n$ is continuous,  and $\hat{X}^n\longrightarrow X$ as $n\to\infty$ uniformly on the compact sets of $\mathcal{X}$.
\end{remark}

\begin{definition}\label{D_discret}
A \textbf{discretized strong Nash equilibrium of order }$n$, is a probability measure $\mathbbm{P}\in\mathcal{P}^*(\Pi^0)$ such that 
\begin{equation}\label{E_discret_0}
M = \mathbbm{P}\circ(\hat{X}^n|\mathcal{F}^{n,W^0})^{-1}\quad \mathbbm{P}\text{  a.s.}
\end{equation}	
	
\end{definition}

\begin{proposition}\label{P_discret}
	For every $n$, there exists a discretized strong Nash equilibrium of order $n$.
\end{proposition}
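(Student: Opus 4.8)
The plan is to recast the equilibrium condition \eqref{E_discret_0} as a fixed point of a set-valued map on a compact convex subset of $\mathfrak{M}_+^1(\Omega)$, and to apply the Kakutani--Fan--Glicksberg theorem (Theorem \ref{Kakutani}) exactly as sketched for the strong equilibrium in Remark \ref{rem:conditionalexp}; the whole point of the discretization is that conditioning on the finite field $\mathcal{F}^{n,W^0}$ restores the continuity that the full conditional expectation $\Phi$ lacks. Concretely, I would introduce the discretized conditioning map
$$
\Phi_n(\mathbbm{P}) := \mathbbm{W}^0\circ\big(W^0,\ \mathbbm{P}\circ(\hat{X}^n|\mathcal{F}^{n,W^0})^{-1}\big)^{-1},
$$
which to an admissible $\mathbbm{P}$ associates the law of $(W^0,M)$ in which $M$ is set equal to the discretized conditional law of $\hat{X}^n$. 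Since $\mathcal{F}^{n,W^0}=\sigma(S^n_I : I)$ is generated by the finite partition $\{S^n_I\}$ with $\mathbbm{W}^0(S^n_I)>0$, this conditional law is constant on each atom and equals the normalized restriction $\mathbbm{P}[\hat{X}^n\in\cdot,\,W^0\in S^n_I]/\mathbbm{W}^0(S^n_I)$, which is an affine and, choosing each $C^n_i$ with Lebesgue-null boundary so that $\mathbbm{W}^0(\partial S^n_I)=0$, weakly continuous function of $\mathbbm{P}$, using also the continuity of $\hat{X}^n$ from Remark \ref{R_discret}(iii).

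To run the fixed point theorem I would work on the compact convex domain $\mathcal{C}:=\mathcal{P}(\mathcal{K}^0)$ furnished by Lemma \ref{L3}(i), where $\mathcal{K}^0:=\{\pi^0\in\Pi^0: M\in\mathcal{M}_0\ \pi^0\text{-a.s.}\}$ and $\mathcal{M}_0\subseteq\mathfrak{M}_+^1(\mathcal{X})$ is a fixed compact convex set. Compactness of $\mathcal{M}_0$ follows from the boundedness of $b,\sigma$ in Assumption \ref{H_existence}(i): the state laws $\{\mathbbm{P}\circ X^{-1}\}$ are tight (as in the proof of Lemma \ref{L3}(i)), hence so are their pushforwards by the continuous map $\hat{X}^n$, and since every atom satisfies $\mathbbm{W}^0(S^n_I)\ge\min_I\mathbbm{W}^0(S^n_I)>0$, all the conditional laws carried on the atoms lie in a single fixed tight set, whose closed convex hull one takes as $\mathcal{M}_0$. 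This renders $\mathcal{K}^0$ compact convex (closed convex in the closed convex $\Pi^0$, and tight), and guarantees the self-consistency $\Phi_n(\mathcal{C})\subseteq\mathcal{K}^0$, so that the composed correspondence maps $\mathcal{C}$ into itself.

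I would then define $F:=\mathcal{P}^*\circ\Phi_n:\mathcal{C}\rightrightarrows\mathcal{C}$. Its values $F(\mathbbm{P})=\mathcal{P}^*(\Phi_n(\mathbbm{P}))$ are nonempty, compact and convex by Proposition \ref{PropReg}(iii) (being $\mathcal{P}^*$ of a single point), and $F$ has closed graph as the composition of the continuous function $\Phi_n$ with the upper hemicontinuous compact-valued $\mathcal{P}^*$; moreover $F(\mathcal{C})=\mathcal{P}^*(\Phi_n(\mathcal{C}))\subseteq\mathcal{P}(\mathcal{K}^0)=\mathcal{C}$. Theorem \ref{Kakutani} then yields a fixed point $\mathbbm{P}^*\in F(\mathbbm{P}^*)=\mathcal{P}^*(\Phi_n(\mathbbm{P}^*))$. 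Setting $\pi^0=\Phi_n(\mathbbm{P}^*)$, this gives $\mathbbm{P}^*\in\mathcal{P}^*(\pi^0)\subseteq\mathcal{P}^*(\Pi^0)$, while the $(W^0,M)$-marginal of $\mathbbm{P}^*$ equals $\pi^0$, i.e. $M=\mathbbm{P}^*[\hat{X}^n\in\cdot\,|\,\mathcal{F}^{n,W^0}]$ a.s., which is precisely \eqref{E_discret_0}; hence $\mathbbm{P}^*$ is a discretized strong Nash equilibrium of order $n$.

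The step I expect to be the genuine difficulty is verifying that $\Phi_n(\mathbbm{P})$ really belongs to $\Pi^0$, i.e. that $W^0$ remains a Brownian motion for the enlarged filtration $\mathbbm{F}^{W^0,M}$ once $M$ is set to the discretized conditional law. This is delicate because $M=\mathbbm{P}[\hat{X}^n\in\cdot\,|\,\mathcal{F}^{n,W^0}]$ is a functional of all the increments of $W^0$, so a priori $\mathcal{F}^M_t$ could reveal future increments and destroy the martingale property. The resolution, following \cite{LackerCommonNoise}, rests on the causality condition \eqref{causality}: combined with the SDE structure and the compatibility characterization of \cite{LackerCompatibility}, it yields that $\hat{X}^n_{[0,t]}$ is conditionally independent of the future of $W^0$ given its past, whence $\mathbbm{P}[\hat{X}^n\in F\,|\,\mathcal{F}^{n,W^0}]=\mathbbm{P}[\hat{X}^n\in F\,|\,\mathcal{F}^{n,W^0}_t]$ for $F\in\mathcal{F}^X_t$. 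Thus $\mathcal{F}^M_t\subseteq\mathcal{F}^{n,W^0}_t\subseteq\mathcal{F}^{W^0}_t$, the enlargement is trivial, and $W^0$ stays a Brownian motion. A secondary but also essential technical point is the weak continuity of $\Phi_n$ at the partition boundaries, which is exactly why one fixes the $C^n_i$ with Lebesgue-null boundary.
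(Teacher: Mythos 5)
Your proposal follows the same strategy as the paper: introduce the discretized conditioning map $\Phi_n$, show it is continuous and maps into a compact convex subset of $\Pi^0$, and combine it with the upper hemicontinuous, nonempty-compact-convex-valued correspondence $\mathcal{P}^*$ to produce a fixed point via Theorem \ref{Kakutani}; the hard step (that $W^0$ remains an $\mathbbm{F}^{M,W^0}$-Brownian motion under $\Phi_n(\mathbbm{P})$, via causality, conditional independence and the adaptedness of the discretized conditional law) is identified and resolved exactly as in the paper. Your flip of the composition (fixed point of $\mathcal{P}^*\circ\Phi_n$ on $\mathcal{P}(\mathcal{K}^0)$ rather than of $\Phi_n\circ\mathcal{P}^*$ on the set of laws of $(W^0,M)$) is immaterial, and your $n$-dependent compact set built from the conditional laws on atoms is a legitimate alternative to the paper's $\Pi^0_c$ for this fixed-$n$ statement --- though note that the paper deliberately builds $\Pi^0_c$ independently of $n$, since the same compact set is reused in the proof of Theorem \ref{T_existence} to extract a limit of the discretized equilibria; with your construction that later step would need a separate tightness argument.

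The one substantive deviation is your continuity proof for $\Phi_n$. Notation \ref{Not_discret} fixes an \emph{arbitrary} Borel partition $c_n$ of $\mathbbm{R}^{p_0}$, and Proposition \ref{P_discret} is asserted for that partition; your argument instead requires re-choosing the cells $C^n_i$ to have Lebesgue-null boundary so that portmanteau applies to $\mathds{1}_{S^n_I}(W^0)$. As written, this proves a variant of the statement (existence for a well-chosen partition), not the statement itself: for cells with boundaries of positive Lebesgue measure your continuity argument breaks down. The paper avoids any condition on the partition by exploiting that all measures in $\mathcal{P}(\Pi^0)$ share the same $W^0$-marginal $\mathbbm{W}^0$, so that weak convergence $\mathbbm{P}^k\to\mathbbm{P}$ is automatically \emph{stable} convergence (Lemma 2.1 in \cite{LackerCompatibility}): $\mathbbm{E}^{\mathbbm{P}^k}[f(X)g(W^0)]\to\mathbbm{E}^{\mathbbm{P}}[f(X)g(W^0)]$ for $f$ bounded continuous and $g$ merely bounded Borel, which is exactly what is needed to pass to the limit in the atom-wise conditional expectations. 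Replacing your boundary-regularity assumption by this stable-convergence argument closes the gap and recovers the proposition in full generality; everything else in your proof stands.
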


We will prove this first existence result by means of the Kakutani fixed point theorem, thanks to the regularity of the correspondence $\mathcal{P}^*$. However, such a fixed point theorem holds in a compact convex set, and our set $\Pi^0$ is not compact, so we now construct a smaller (and compact) set, in which we will apply that theorem.

\begin{notation}\label{N_K_X}
	If $\mathbbm{P}\in \mathfrak{M}_+^1(\mathcal{X})$ is such that $X$ is a $\mathbbm{P}$-semimartingale, we denote by $A^{\mathbbm{P}}$ and $M^{\mathbbm{P}}$ the bounded variation and the martingale components of $X$ under $\mathbbm{P}$.
	
	$\mathcal{K}^{X}$ denotes the closure of the space of elements of $\mathfrak{M}_+^1(\mathcal{X})$ under which $X$ is a  semimartingale for which $ |A^{i,\mathbbm{P}}|,\, i\leq d$ and $Tr(\langle M^{\mathbbm{P}}\rangle)$ are absolutely continuous with derivatives bounded by
	$C$ $dt\otimes d\mathbbm{P}$ a.e., where $C$ is a fixed constant bounding $b$ and $\bar{\sigma}\bar{\sigma}^{\intercal}$ for the sup norm.
	
\end{notation}

\begin{lemma}
	$\mathcal{K}^{X}$ is a compact subset of $\mathfrak{M}_+^1(\mathcal{X})$. 
\end{lemma}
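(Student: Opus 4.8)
The plan is to reduce the statement to a tightness estimate and conclude via Prokhorov's theorem. Since $\mathcal{X}$ is Polish, the space $\mathfrak{M}_+^1(\mathcal{X})$ endowed with the weak topology is itself Polish, hence a complete metric space; and $\mathcal{K}^X$ is closed by construction, being defined as a closure. In such a space the closure of a relatively compact set is compact, so it suffices to prove that the family $\mathcal{K}_0$ of measures entering the definition of $\mathcal{K}^X$ — namely those $\mathbbm{P}$ under which $X$ is a semimartingale with $|A^{i,\mathbbm{P}}|$, $i\le d$, and $\mathrm{Tr}(\langle M^{\mathbbm{P}}\rangle)$ absolutely continuous with Lebesgue derivatives bounded by $C$, $dt\otimes d\mathbbm{P}$ a.e. — is relatively compact. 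By Prokhorov's theorem this amounts to showing that $\mathcal{K}_0$ is tight in $\mathfrak{M}_+^1(\mathcal{X})$.

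To prove tightness, I would first derive a uniform Kolmogorov--Chentsov moment estimate. Fix $\mathbbm{P}\in\mathcal{K}_0$ and use the canonical decomposition $X=X_0+A^{\mathbbm{P}}+M^{\mathbbm{P}}$. Integrating the a.e. bound on the derivative of the total variation of $A^{i,\mathbbm{P}}$ gives the pathwise estimate $|A^{i,\mathbbm{P}}_t-A^{i,\mathbbm{P}}_s|\le C(t-s)$ for all $s\le t$, $\mathbbm{P}$-a.s., whence $|A^{\mathbbm{P}}_t-A^{\mathbbm{P}}_s|\le C\sqrt{d}\,(t-s)$. Similarly, the bound on the derivative of $\mathrm{Tr}(\langle M^{\mathbbm{P}}\rangle)$ yields, for each coordinate, $\langle M^{i,\mathbbm{P}}\rangle_t-\langle M^{i,\mathbbm{P}}\rangle_s\le \mathrm{Tr}(\langle M^{\mathbbm{P}}\rangle)_t-\mathrm{Tr}(\langle M^{\mathbbm{P}}\rangle)_s\le C(t-s)$, $\mathbbm{P}$-a.s. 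The Burkholder--Davis--Gundy inequality then gives, with the exponent $4$,
\begin{equation*}
\mathbbm{E}^{\mathbbm{P}}\big[|M^{\mathbbm{P}}_t-M^{\mathbbm{P}}_s|^4\big]\le K_d\sum_{i\le d}\mathbbm{E}^{\mathbbm{P}}\big[(\langle M^{i,\mathbbm{P}}\rangle_t-\langle M^{i,\mathbbm{P}}\rangle_s)^2\big]\le K'_d\,(t-s)^2,
\end{equation*}
with constants depending only on $d$ and $C$. Combining this with the deterministic drift bound through the elementary inequality $(a+b)^4\le 8(a^4+b^4)$, I obtain a uniform estimate $\mathbbm{E}^{\mathbbm{P}}[|X_t-X_s|^4]\le K(t-s)^2$, valid for every $\mathbbm{P}\in\mathcal{K}_0$ and all $s\le t$, with $K$ independent of $\mathbbm{P}$.

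Finally, since the state process of \eqref{MFG} starts at the fixed deterministic point $x$, the family of initial laws is trivially tight ($\mathbbm{P}\circ X_0^{-1}=\delta_x$), so the uniform moment estimate above (of exponent $4>2$) is precisely the hypothesis of the standard tightness criterion for laws of continuous processes; see Theorem 1.4.6 in \cite{stroock}, already invoked in the proof of Lemma \ref{L3}. Hence $\mathcal{K}_0$ is tight, therefore relatively compact, and $\mathcal{K}^X=\overline{\mathcal{K}_0}$ is compact. I expect the only delicate points to be the passage from the a.e. bounds on the Lebesgue derivatives of $A^{\mathbbm{P}}$ and $\mathrm{Tr}\langle M^{\mathbbm{P}}\rangle$ to the pathwise and expected increment bounds, and the bookkeeping ensuring that all constants are genuinely uniform over $\mathcal{K}_0$; once these uniform fourth-moment estimates are established the tightness, and hence the compactness, is routine. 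I would also flag that controlling the initial law is indispensable for tightness and is supplied here by the fixed initial condition of \eqref{MFG}.
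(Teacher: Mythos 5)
Your proposal is correct and takes essentially the same route as the paper: the paper's proof is a two-line appeal to the well-known tightness of families of laws of continuous semimartingales with bounded characteristics (citing Stroock--Varadhan, Theorem 1.4.6) followed by Prokhorov's theorem, whereas you supply in addition the BDG/Kolmogorov fourth-moment estimates that underlie that citation, which is the same argument carried out in detail. Your explicit remark that tightness of the initial laws is indispensable (the set in Notation \ref{N_K_X} does not literally constrain $X_0$, and the constraint is only supplied by the fixed starting point of \eqref{MFG}) is a point the paper's proof glosses over, and flagging it is to your credit.
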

\begin{proof}
	It is well known that any family of laws of continuous diffusions with bounded coefficients is tight (see \cite{stroock} Theorem 1.4.6 for instance) so $\mathcal{K}^{X}$ is the closure of a tight set, hence of a relatively compact set by the Prohorov's theorem.

\end{proof}

For all $n\in\mathbbm{N}^*$, we also denote 
 $$
 \mathcal{K}^{X}_n:=\{\mathbbm{P}\circ(\hat{X}^n)^{-1}:\mathbbm{P}\in\mathcal{K}^{X}\}.
 $$
By the tightness of $\mathcal{K}^{X}$ we may introduce an increasing sequence of compact subsets $(K^{\infty}_k)_{k\in\mathbbm{N}^*}$ of $\mathcal{X}$ such that 
 $$
 \mathbbm{P}[X\in K^{\infty}_k]\geq 1-\frac{1}{k}
 ~~\mbox{for all}~~
 k>0~~
 \mbox{and}~~
 \mathbbm{P}\in\mathcal{K}^{X}.
 $$
Finally, we denote
 $$
 K_k^n:=\hat{X}^n(K^{\infty}_k)
 ~~\mbox{and}~~
 \bar{K}_k:= \underset{n\in\mathbbm{N}\cup\{\infty\} }{\bigcup}K_k^n,
 ~~\mbox{for all}~~
 k,n\in\mathbb{N}.
 $$
\begin{lemma}
For all $k,n$, $K_k^n$ and $\bar{K}_k$ are compact, and $\mathcal{K}^{X}_n$ is tight.
\end{lemma}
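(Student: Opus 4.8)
The plan is to separate the three assertions, disposing of the two routine ones first and then concentrating on the compactness of $\bar{K}_k$, which is the only genuine difficulty. Throughout I adopt the natural convention $\hat{X}^\infty := X = \mathrm{id}_{\mathcal{X}}$, so that $K_k^\infty = K^{\infty}_k$.

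First I would record the compactness of $K_k^n$: by Remark \ref{R_discret} (iii) each $\hat{X}^n$ ($n\in\mathbbm{N}$) is continuous, so $K_k^n=\hat{X}^n(K^{\infty}_k)$ is the continuous image of a compact set, hence compact; and $K_k^\infty=K^{\infty}_k$ is compact by hypothesis. Next I would deduce tightness of $\mathcal{K}^{X}_n$ directly from this. For $\nu=\mathbbm{P}\circ(\hat{X}^n)^{-1}$ with $\mathbbm{P}\in\mathcal{K}^{X}$, the elementary inclusion $K^{\infty}_k\subseteq(\hat{X}^n)^{-1}\big(\hat{X}^n(K^{\infty}_k)\big)=(\hat{X}^n)^{-1}(K_k^n)$ gives $\nu(K_k^n)\ge\mathbbm{P}[X\in K^{\infty}_k]\ge 1-\frac{1}{k}$. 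Since $K_k^n$ is compact and this bound is uniform over $\nu\in\mathcal{K}^{X}_n$ and over $k$, tightness of $\mathcal{K}^{X}_n$ follows.

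The main obstacle is the compactness of $\bar{K}_k=\bigcup_{n\in\mathbbm{N}\cup\{\infty\}}K_k^n$, since an infinite union of compact sets need not be compact; here one must exploit that $\hat{X}^n\to X$ uniformly on compacts. I would argue by sequential compactness. Given $(y_m)\subseteq\bar{K}_k$, write $y_m=\hat{X}^{n_m}(x_m)$ with $x_m\in K^{\infty}_k$ and $n_m\in\mathbbm{N}\cup\{\infty\}$, and use compactness of $K^{\infty}_k$ to extract a subsequence along which $x_m\to x_*\in K^{\infty}_k$. If some further subsequence has $n_m$ constant equal to $n_0$, then $y_m=\hat{X}^{n_0}(x_m)\to\hat{X}^{n_0}(x_*)\in K_k^{n_0}\subseteq\bar{K}_k$ by continuity of $\hat{X}^{n_0}$. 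Otherwise I may extract $n_m\to\infty$, and then the key estimate
\[
\|y_m-x_*\|_\infty\le\sup_{x\in K^{\infty}_k}\|\hat{X}^{n_m}(x)-x\|_\infty+\|x_m-x_*\|_\infty
\]
forces $y_m\to x_*\in K^{\infty}_k\subseteq\bar{K}_k$: the first term tends to $0$ by the uniform convergence $\hat{X}^n\to X$ on the compact $K^{\infty}_k$ (Remark \ref{R_discret} (iii)), and the second by construction. In either case the limit lies in $\bar{K}_k$, so $\bar{K}_k$ is sequentially compact, hence compact.

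The one point worth stressing is that this decomposition is precisely what lets me bypass any Lipschitz or equicontinuity property of the maps $\hat{X}^n$: bounding $\|\hat{X}^{n_m}(x_m)-x_m\|_\infty$ by the uniform supremum over $K^{\infty}_k$ is exactly what makes the uniform-convergence hypothesis of Remark \ref{R_discret} (iii) sufficient, and it is the step where the compactness of $K^{\infty}_k$ and the convergence $\hat{X}^n\to X$ interact.
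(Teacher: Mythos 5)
Your proof is correct and follows essentially the same route as the paper: compactness of $K_k^n$ as the continuous image of a compact set, tightness of $\mathcal{K}^X_n$ via the inclusion $\{X\in K^\infty_k\}\subseteq\{\hat{X}^n\in K_k^n\}$, and sequential compactness of $\bar{K}_k$ through the same dichotomy (a constant-index subsequence versus indices tending to infinity) combined with the triangle inequality and the uniform convergence $\hat{X}^n\to X$ on the compact set $K^\infty_k$ from Remark \ref{R_discret} (iii). The only differences are presentational: you make the convention $\hat{X}^\infty:=X$ explicit and phrase the dichotomy in terms of constant subsequences rather than finite unions of the $K_k^{i_j}$, but these are equivalent.
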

\begin{proof}
Compactness of $K_k^n$ follows from the continuity of $\hat{X}^n$ which therefore maps compact sets onto compact sets.
	
We next prove that $\mathcal{K}^{X}_n$ is tight. Let $\mathbbm{Q}=\mathbbm{P}\circ(\hat{X}^n)^{-1}\in\mathcal{K}^{X}_n$, for some $\mathbbm{P}\in\mathcal{K}^{X}$. Then, for all $k$, we have $\mathbbm{Q}[K_k^n]=\mathbbm{P}[\hat{X}^n\in K_k^n]\geq \mathbbm{P}[X\in K^{\infty}_k]\geq 1-\frac{1}{k}$. Since this holds for any $\mathbbm{Q}\in\mathcal{K}^{X}_n$ then the announced tightness is shown.
	
It remains to prove that $\bar{K}_k$ is compact. Fix a sequence $(x_n)_{n\geq 0}$ in $\bar{K}_k$. Either there exists some $(i_1,\cdots,i_N)\in\bar{\mathbbm{N}}^N$ such that $(x_n)_{n\geq 0}$ remains in the compact set $\bigcup_{j\leq N}K^{i_j}_k$, in which case that sequence admits a converging subsequence, or we can assume (up to an extraction which we omit) that there exists a strictly increasing sequence $(p_n)_n$ such that for all $n$, $x_n\in K_k^{p_n}$.
	
	Then for all $n$ we may consider some $y_n\in K^{\infty}_k$ such that $x_n=\hat{X}^n(y_n)$, and since $K^{\infty}_k$ is compact, we may assume (again up to the extraction of a subsequence) that $y_n$ converges to some $y$ in $K^{\infty}_k$. We now conclude the proof by showing that $x_n$ also tends to $y$, hence that any sequence of $\bar{K}_k$ admits a converging subsequence in $\bar{K}_k$.
	Indeed we have $$|x_n-y|= |\hat{X}^{p_n}(y_n)-y|\leq |\hat{X}^{p_n}(y_n)-y_n|+|y_n-y|.$$
	The second term on the right hand side tends to zero, and since $p_n$ is strictly increasing, then $\hat{X}^{p_n}$ tends uniformly to $X$ on compact sets, and in particular on  $K^{\infty}_k$ (see Remark \ref{R_discret}, Item 3) so $|\hat{X}^{p_n}(y_n)-y_n|$ tends to zero and the proof is complete.
\end{proof}

We now introduce the set in which we will find the discretized equilibriums: 
 \begin{equation}\label{E_discret_set}
 \Pi^0_c
 :=
 \Big\{ \pi^0\in\Pi^0: \pi^0(\bar{K}_k)\geq 1-\frac{1}{k}
                                ~\mbox{for all}~k>0
 \Big\}.
\end{equation}

\begin{lemma}\label{L_discret_set}
For all $n$, $\Pi^0_c$ is a compact convex set.
\end{lemma}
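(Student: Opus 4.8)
The plan is to establish the three required properties---convexity, closedness, and the bounded-tightness that yields compactness---for the set $\Pi^0_c$ defined in \eqref{E_discret_set}. Recall that $\Pi^0_c$ is the intersection of $\Pi^0$ with the collection of measures $\pi^0$ satisfying the tightness-type constraints $\pi^0(\bar{K}_k)\geq 1-\frac{1}{k}$ for all $k>0$. Since $\Pi^0$ is already closed and convex by Proposition \ref{PropReg} (i), it suffices to show that the extra constraint set is itself closed and convex, and that the intersection is tight.

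First I would address \textbf{convexity}. Each constraint $\pi^0(\bar{K}_k)\geq 1-\frac{1}{k}$ is linear in $\pi^0$ (the map $\pi^0\mapsto\pi^0(\bar{K}_k)$ is affine on the space of measures), so the constraint set is an intersection of half-spaces, hence convex; intersecting with the convex set $\Pi^0$ preserves convexity. Next, \textbf{closedness}: here I would use that each $\bar{K}_k$ is compact (established in the preceding lemma) and therefore closed, so by the Portmanteau theorem the map $\pi^0\mapsto\pi^0(\bar{K}_k)$ is upper semicontinuous for weak convergence. Thus if $\pi^0_m\to\pi^0$ weakly with $\pi^0_m(\bar{K}_k)\geq 1-\frac{1}{k}$ for all $m$, then $\pi^0(\bar{K}_k)\geq\limsup_m \pi^0_m(\bar{K}_k)\geq 1-\frac{1}{k}$, so the limit still satisfies the constraint. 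Combined with the closedness of $\Pi^0$, this shows $\Pi^0_c$ is closed.

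For \textbf{compactness} I would invoke Prohorov's theorem. The constraints are designed precisely so that the marginal laws on $\mathcal{W}^0\times\mathfrak{M}_+^1(\mathcal{X})$ are uniformly tight: the condition $\pi^0(\bar{K}_k)\geq 1-\frac{1}{k}$ controls the $\mathfrak{M}_+^1(\mathcal{X})$-component (identifying, via the canonical projection onto $M$, the mass concentration on the compact sets $\bar{K}_k\subset\mathcal{X}$), while the $\mathcal{W}^0$-marginal of any $\pi^0\in\Pi^0$ is the Wiener measure $\mathbbm{W}^0$, hence automatically tight. One must be slightly careful that $\bar{K}_k$ is a compact subset of the path space $\mathcal{X}$, so that the constraint genuinely forces tightness at the level of $\mathfrak{M}_+^1(\mathcal{X})$-valued measures; since $\{m\in\mathfrak{M}_+^1(\mathcal{X}): m(\bar K_k)\ge 1-\tfrac1k \text{ for all }k\}$ is itself a tight, hence relatively compact, subset of $\mathfrak{M}_+^1(\mathcal{X})$, the uniform lower bounds transfer to tightness of the family $\{\pi^0:\pi^0\in\Pi^0_c\}$. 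Being tight and closed inside the Polish space $\mathfrak{M}_+^1(\mathcal{W}^0\times\mathfrak{M}_+^1(\mathcal{X}))$, $\Pi^0_c$ is compact.

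The main obstacle I anticipate is the bookkeeping in the compactness step: one has to verify cleanly that the constraints phrased in terms of $\bar{K}_k\subset\mathcal{X}$ actually produce tightness of measures living on the two-level space $\mathcal{W}^0\times\mathfrak{M}_+^1(\mathcal{X})$, and in particular that the ``compact sets of measures'' are being correctly identified---this requires the standard but slightly delicate fact that a lower bound of the form $m(K)\ge 1-\varepsilon$ uniformly over a family of random measures yields tightness of that family in $\mathfrak{M}_+^1(\mathcal{X})$. The convexity and closedness arguments, by contrast, are routine linearity and Portmanteau upper-semicontinuity arguments.
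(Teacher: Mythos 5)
Your proof is correct and structurally parallel to the paper's (convexity by stability under convex combinations, closedness by an upper-semicontinuity argument, compactness by tightness plus Prohorov), but the closedness step is packaged differently, and one point needs to be made precise. The notation $\pi^0(\bar K_k)$ in \eqref{E_discret_set} is an abuse: since $\bar K_k\subset\mathcal{X}$ while $\pi^0$ lives on $\mathcal{W}^0\times\mathfrak{M}_+^1(\mathcal{X})$, the constraint means $\mathbbm{E}^{\pi^0}[M(\bar K_k)]\geq 1-\frac{1}{k}$, i.e. it bears on the \emph{mean} measure, not $\pi^0$-a.s. on $M$ (this is how the paper uses it in the proofs of Lemma \ref{L_discret_set} and Proposition \ref{P_discret}). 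For closedness, your ``Portmanteau'' step is therefore a two-level application: $m\mapsto m(\bar K_k)$ is bounded and u.s.c. by Portmanteau (as $\bar K_k$ is closed), and then $\pi^0\mapsto\mathbbm{E}^{\pi^0}[M(\bar K_k)]$ is u.s.c. under weak convergence because integration of a bounded u.s.c. function is u.s.c.; this is equivalent to, and arguably cleaner than, the paper's route, which uses the Skorohod representation theorem together with pointwise Portmanteau and the reversed Fatou lemma. For compactness the same point matters more: under the mean interpretation the laws $\pi^0\circ M^{-1}$, $\pi^0\in\Pi^0_c$, are \emph{not} supported on the compact set $\{m:\, m(\bar K_k)\geq 1-\frac{1}{k}\ \text{for all } k\}$, so the ``transfer'' you invoke cannot be by inclusion; one needs Markov's inequality, e.g. $\pi^0\big(M(\bar K_{k^2})<1-\tfrac{1}{k}\big)\leq\tfrac{1}{k}$, to build for each $\varepsilon>0$ a compact subset of $\mathfrak{M}_+^1(\mathcal{X})$ carrying $\pi^0\circ M^{-1}$-mass at least $1-\varepsilon$ uniformly over $\Pi^0_c$ --- which is exactly the ``standard but slightly delicate fact'' you flag (tightness of the mean measures implies tightness of the laws of the random measures). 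Combined with the $\mathcal{W}^0$-marginal being Wiener measure, this gives tightness of $\Pi^0_c$, and with closedness, compactness. With that step spelled out your argument is complete; note that the paper itself dismisses this tightness as ``immediate by construction'', so on this point you are, if anything, more explicit than the original.
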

\begin{proof}
We fix $n$.
It is immediate by construction that $\Pi^0_c$ is tight hence relatively compact. Moreover, $\Pi^0$ is convex (see Proposition \ref{PropReg} Item 1) and \eqref{E_discret_set} is stable by convex combination, so $\Pi^0_c$ is also convex.

We proceed showing that $\Pi^0_c$ is closed. Since $\Pi^0$ is closed (see Proposition \ref{PropReg} Item 1), it is enough to show that \eqref{E_discret_set} is stable under convergence. We fix a converging sequence $\pi^j\longrightarrow \pi$ were $\pi^j\in\Pi^0_c$ for all $j$.

By the Skorohod representation theorem (see \cite{billingsley86} Theorem 6.7 for instance), there exists a probability space $(\tilde{\Omega},\tilde{\mathcal{F}},\tilde{\mathbbm{P}})$ on which there exist random measures $M^j$ of law $\pi^j\circ M^{-1}$ and $M^{lim}$ of law $\pi\circ M^{-1}$, and a $\tilde{\mathbbm{P}}$-null set $\mathcal{N}$ such that for all $\omega$ in $\mathcal{N}^c$, $M^j(\omega)\rightarrow M^{lim}(\omega)$ weakly.
	Since the sets $\bar{K}_k$ are closed, a consequence of Portemanteau's theorem (see \cite{billingsley86} Theorem 2.1 for instance), is that for all $k$ and $\omega\in\mathcal{N}^c$,
	\begin{equation}\label{E_K_input_2}
		M^{lim}(\omega)(\bar{K}_k)\geq \underset{j}{\text{limsup }}M^j(\omega)(\bar{K}_k).
	\end{equation}
	Then, taking the expectation in \eqref{E_K_input_2} and by the reversed Fatou's lemma, we get that for all $k$,
	\begin{equation}
		\mathbbm{E}^{\tilde{\mathbbm{P}}}[M^{lim}(\bar{K}_k)]\geq \mathbbm{E}^{\tilde{\mathbbm{P}}}[\underset{j}{\text{limsup }}M^j(\bar{K}_k)]\geq \underset{j}{\text{limsup }}\mathbbm{E}^{\tilde{\mathbbm{P}}}[M^j(\bar{K}_k)],
	\end{equation}
	hence that $\mathbbm{E}^{\pi}[M(\bar{K}_k)] \geq \underset{j}{\text{limsup }}\mathbbm{E}^{\pi_j}[M(\bar{K}_k)]\geq 1-\frac{1}{k}$. So \eqref{E_discret_set} holds under $\pi$ and the proof is complete.
\end{proof}

We may now prove the main result of this subsection.
\begin{prooff}{} {\bf \hspace{-5mm} of Proposition \ref{P_discret}}\quad
	We first note that $M = \mathbbm{P}\circ(\hat{X}^n|\mathcal{F}^{n,W^0})^{-1}$, $\mathbbm{P}$ a.s. is equivalent to having 
	$\mathbbm{P}\circ(W^0,M)^{-1}= \mathbbm{P}\circ(W^0,\mathbbm{P}\circ(\hat{X}^n|\mathcal{F}^{n,W^0})^{-1})^{-1}
	= \mathbbm{W}^0\circ(W^0, \mathbbm{P}\circ(\hat{X}^n|\mathcal{F}^{n,W^0})^{-1})^{-1}$.
	 
	 We introduce on $\mathcal{P}(\Pi^0)$ the mapping 
	 $$\Phi_n:\mathbbm{P}\longmapsto \mathbbm{W}^0\circ\left(W^0, \mathbbm{P}\circ(\hat{X}^n|\mathcal{F}^{n,W^0})^{-1}\right)^{-1},$$ and show that it is continuous on that set.
	 
	 We fix a converging sequence $\mathbbm{P}^k\longrightarrow\mathbbm{P}$ in $\mathcal{P}(\Pi^0)$. By Theorem 4.11 in \cite{kallenberg}, in oder to show that $\mathbbm{W}^0\circ\big(W^0, \mathbbm{P}^k(\hat{X}^n|\mathcal{F}^{n,W^0})^{-1}\big)^{-1}\longrightarrow\mathbbm{W}^0\circ\big(W^0, \mathbbm{P}^k(\hat{X}^n|\mathcal{F}^{n,W^0})^{-1}\big)^{-1}$, it is enough to show that for all bounded continuous $\phi$, 
	 $$
	 \mathbbm{W}^0\circ\left(W^0, \mathbbm{E}^k[\phi(\hat{X}^n)|\mathcal{F}^{n,W^0}]\right)^{-1}
	 \longrightarrow
	 \mathbbm{W}^0\circ\left(W^0, \mathbbm{E}[\phi(\hat{X}^n)|\mathcal{F}^{n,W^0}]\right)^{-1}.
	 $$
As  $\mathbbm{E}^k[\phi(\hat{X}^n)|\mathcal{F}^{n,W^0}]= \underset{I}{\sum}\frac{\mathbbm{E}^k\left[\phi(\hat{X}^n)\mathds{1}_{S^n_I}(W^0)\right]}{\mathbbm{W}^0[S^n_I]}\mathds{1}_{S^n_I}(W^0)$, for all $k$, we are reduced to prove for all $\phi\in\mathcal{C}_b(\mathcal{X})$, $\psi\in\mathcal{C}_b(\mathbbm{R})$, and $\zeta\in\mathcal{C}_b(\mathcal{W}^0)$ that
	 \begin{equation}\label{EapproxNash1}
	 \begin{array}{rcl}
	 &&
	 \mathbbm{E}^{\mathbbm{W}^0}\left[ \psi\left(\underset{I}{\sum}\frac{\mathbbm{E}^k\left[\phi(\hat{X}^n)\mathds{1}_{S^n_I}(W^0)\right]}{\mathbbm{W}^0[S^n_I]}\mathds{1}_{S^n_I}(W^0)\right)\zeta(W^0)\right]\\
	 &&\underset{k}{\longrightarrow}\;\;
	 \mathbbm{E}^{\mathbbm{W}^0}\left[ \psi\left(\underset{I}{\sum}\frac{\mathbbm{E}\left[\phi(\hat{X}^n)\mathds{1}_{S^n_I}(W^0)\right]}{\mathbbm{W}^0[S^n_I]}\mathds{1}_{S^n_I}(W^0)\right)\zeta(W^0)\right].
	 \end{array}
	 \end{equation}
	 Since $\mathbbm{P}$ and the $\mathbbm{P}^k$ all have the same first marginal $\mathbbm{W}^0$, then the convergence of $\mathbbm{P}^k$ to $\mathbbm{P}$ is a stable convergence in the sense that for all bounded continuous $f$ and bounded Borel $g$, we have that  $\mathbbm{E}^k[f(X)g(W^0)]$ tends to $\mathbbm{E}[f(X)g(W^0)]$, see Lemma 2.1 in \cite{LackerCompatibility} for instance. In particular, by continuity of $\phi$ and $\hat{X}^n$, we have that $\underset{I}{\sum}\frac{\mathbbm{E}^k\left[\phi(\hat{X}^n)\mathds{1}_{S^n_I}(W^0)\right]}{\mathbbm{W}^0[S^n_I]}\mathds{1}_{S^n_I}(W^0)$ tends   $\mathbbm{W}^0$ a.s. to $\underset{I}{\sum}\frac{\mathbbm{E}\left[\phi(\hat{X}^n)\mathds{1}_{S^n_I}(W^0)\right]}{\mathbbm{W}^0[S^n_I]}\mathds{1}_{S^n_I}(W^0)$, and by the dominated convergence Theorem, \eqref{EapproxNash1} holds for any $\phi,\psi,\zeta$, implying the desired continuity of the mapping $\Phi_n$.
	 \\
	 \\
	 We now show that $\Phi_n$ takes values in $\Pi^0_c$, see Notation \ref{E_discret_set}. Let $\mathbbm{P}\in\mathcal{P}(\Pi^0)$ and $\mathbbm{Q}:=\Phi_n(\mathbbm{P})=\mathbbm{W}^0\circ\left(W^0, \mathbbm{P}\circ(\hat{X}^n|\mathcal{F}^{n,W^0})^{-1}\right)^{-1}$. It is immediate that under $\mathbbm{Q}$, $W^0$ is an $\mathbbm{F}^{W^0}$-Brownian motion, however, in order to fit the definition of $\Pi^0_c$ which is included in $\Pi^0$, we need to show that $W^0$ is an $\mathbbm{F}^{M,W^0}$-Brownian motion. Since $M$ is  $\mathbbm{Q}$ a.s. equal to the $\mathcal{F}^{W^0}$-measurable random measure $\mathbbm{P}\circ(\hat{X}^n|\mathcal{F}^{n,W^0})^{-1}$, in order to show that $W^0$ is indeed an $\mathbbm{F}^{M,W^0}$-Brownian motion, it is enough to show that $\mathbbm{P}\circ(\hat{X}^n|\mathcal{F}^{n,W^0})^{-1}$ is $\mathbbm{F}^{W^0}$-adapted in the sense that for any $F\in\mathcal{F}^{X}_t$, $\mathbbm{P}\circ(\hat{X}^n|\mathcal{F}^{n,W^0})^{-1}(F)$ is $\mathcal{F}^{W^0}_t$-measurable.

We fix some $k< 2^n$, $t\in[t_k,t_{k+1}[$ and $F\in\mathcal{F}^{X}_t$. By construction of $\hat{X}^n$, we have that
\begin{equation}\label{E_discret_2}
\{\hat{X}^n\in F\}\in \mathcal{F}^{X}_{t_k}.
\end{equation}
	Then,  by definition of $\mathcal{P}(\Pi^0)$, see Definition \ref{def_admissible},  $W^0$ is under  $\mathbbm{P}$ and $\mathbbm{F}$-Brownian motion, so for all $t$, $\mathcal{F}^{X}_t$ is conditionally independent of $\mathcal{F}^{W^0}_T$ given $\mathcal{F}^{W^0}_t$, and in particular, combining \eqref{E_discret_2} and Theorem 3.11 in \cite{LackerCompatibility} we have 
\begin{equation}\label{E_discret_3}
\mathbbm{P}\circ(\hat{X}^n\in F|\mathcal{F}^{W^0}_T)^{-1}= \mathbbm{P}\circ(\hat{X}^n\in F|\mathcal{F}^{W^0}_{t_k})^{-1} \text{ a.s.}
\end{equation}	

Then, we can write
	  \begin{equation}
	  	\begin{array}{rcl}
	  	\mathbbm{P}\circ(\hat{X}^n|\mathcal{F}^{n,W^0})^{-1}[F]&:=&\mathbbm{P}[\hat{X}^n\in F|\mathcal{F}^{n,W^0}_T]\\
	  	&=&\mathbbm{E}[\mathbbm{P}[\hat{X}^n\in F|\mathcal{F}^{W^0}_T]|\mathcal{F}^{n,W^0}_T]\\
	  	&=&\mathbbm{E}[\mathbbm{P}[\hat{X}^n\in F|\mathcal{F}^{W^0}_{t_k}]|\mathcal{F}^{n,W^0}_T]\\
	  	&=&\mathbbm{E}[\mathbbm{P}[\hat{X}^n\in F|\mathcal{F}^{W^0}_{t_k}]|\mathcal{F}^{n,W^0}_{t_k}]\\
	  	&=&\mathbbm{P}[\hat{X}^n\in F|\mathcal{F}^{n,W^0}_{t_k}]\\
	  	&=&\mathbbm{P}[\hat{X}^n\in F|\mathcal{F}^{n,W^0}_{t}]
	  	\end{array}
	  \end{equation}
where the third  equality holds by \eqref{E_discret_3}, and the fourth one by independence of the increments of $W^0$, and construction of $\mathbbm{F}^{n,W^0}$.
So we indeed have that $\mathbbm{P}\circ(\hat{X}^n|\mathcal{F}^{n,W^0})^{-1}(F)$ is $\mathcal{F}^{W^0}_t$-measurable, and therefore, $W^0$ is under $\mathbbm{Q}$ an $\mathbbm{F}^{M,W^0}$-Brownian motion so that $\mathbbm{Q}\in\Pi^0$.

We conclude showing that $\mathbbm{Q}$ verifies \eqref{E_discret_set}. We fix an integer $k$, and we have that
\begin{equation}
	\begin{array}{rcl}
	 \mathbbm{E}^{\mathbbm{Q}}[M[\bar{K}_k]]&=&\mathbbm{E}^{\mathbbm{Q}}[\mathbbm{P}\circ(\hat{X}^n|\mathcal{F}^{n,W^0})^{-1}[\bar{K}_k]]\\
	 &=&\mathbbm{E}^{\mathbbm{P}}[\mathbbm{P}[\hat{X}^n\in \bar{K}_k|\mathcal{F}^{n,W^0}]]\\
	 &=&\mathbbm{P}[\hat{X}^n\in \bar{K}_k]\\
	 &\geq&\mathbbm{P}[\hat{X}^n\in K^n_k]\\
	 &\geq&\mathbbm{P}[X\in K^{\infty}_k]\\
	 &\geq &1- \frac{1}{k},
	\end{array}
\end{equation}
where the last inequality holds since $\mathbbm{P}\in\mathcal{P}(\Pi^0)$, hence $\mathbbm{P}\circ X^{-1}\in \mathcal{K}^{X}$ and by construction of the sets $\bar{K}_k$, $K^n_k$ and $K_k$.

We may now conclude with a version of the Kakutani's Theorem. 
We consider the restriction of $\mathcal{P}^*$ on $\Pi^0_c$\\ $\mathcal{P}^*: \Pi^0_c\xtwoheadrightarrow{  }\mathcal{P}(\Pi^0_c)$ which defines an uhc correspondence taking non empty compact convex values (see Proposition \ref{PropReg} Item 3). 

We recall that  $\Phi_n:\mathcal{P}(\Pi^0_c)\longrightarrow \Pi^0_c$ is a continuous mapping, and that $\Pi^0_c$ is a convex compact subset of a locally convex topological space (see Lemma \ref{L_discret_set}), so by Theorem \ref{Kakutani} and Lemma \ref{Lkakutani}, there exists in  $\Pi^0_c$ a fixed point $\pi^*_n\in\Phi_n\circ\mathcal{P}^*(\pi^*_n)$.

We conclude this proof by showing that if we set $\mathbbm{P}_n^*$ to be the element of $\mathcal{P}^*(\pi^*_n)$ such that $\pi^*_n= \Phi(\mathbbm{P}_n^*)$, then $\mathbbm{P}_n^*$ is a discretized strong Nash equilibrium of order $n$, see Definition \ref{D_discret}.

$\mathbbm{P}_n^*$ belongs to $\mathcal{P}(\Pi^0)$ and  $\mathcal{P}^*(\Pi^0)$. Moreover, it verifies $\mathbbm{P}_n^*\circ(W^0,M)^{-1}=\pi^*_n = \mathbbm{W}^0\circ\left(W^0, \mathbbm{P}_n^*(\hat{X}^n|\mathcal{F}^{n,W^0})^{-1}\right)^{-1}$ hence $M = \mathbbm{P}_n^*(X|\mathcal{F}^{n,W^0})^{-1}\quad$ $\mathbbm{P}_n^*$ a.s. meaning that \eqref{E_discret_0} holds,
%
and $\mathbbm{P}_n^*$ is a discretized strong Nash equilibrium of order $n$.
\end{prooff}

\subsection{Existence of a weak Nash equilibrium}\label{S3.4}

We conclude this section by proving Theorem \ref{T_existence}, i.e. the existence of a weak Nash equilibrium.

\begin{prooff}{} {\bf \hspace{-5mm} of Theorem \ref{T_existence}}\quad
	For every $n\in\mathbbm{N}$, we consider $\mathbbm{P}_n^{*}$ a discretized strong Nash equilibrium of order $n$ whose existence is ensured by Proposition \ref{P_discret}. Every $\mathbbm{P}_n^{*}$ belongs to $\mathcal{P}(\Pi^0_c)$ which is compact since $\Pi^0_c$ is (see Lemmas \ref{L3} (i) and \ref{L_discret_set}). So we may consider an accumulation point $\mathbbm{P}^*\in \mathcal{P}(\Pi^0_c)$ of the sequence $(\mathbbm{P}_n^{*})_n$. We will now show that $\mathbbm{P}^*$ is a weak solution of the MFG in the sense of Definition \ref{weaksol}. 
	
	We first remark that, since every $\mathbbm{P}_n^{*}$ belongs to $\mathcal{P}^*(\Pi^0)$ which is closed (see Proposition \ref{PropReg} (iii), then $\mathbbm{P}^*$ also belongs to $\mathcal{P}^*(\Pi^0)$, which means that $\mathbbm{P}^*$ satisfies the individual optimality condition of Definition \ref{weaksol}. We are left to show that $\mathbbm{P}^*$ satisfies the weak equilibrium condition of Definition \ref{weaksol}. In the sequel, we still denote $(\mathbbm{P}_n^{*})_n$ the subsequence which converges to  $\mathbbm{P}^{*}$. 
	
	We need to show that $M = \mathbbm{P}^{*}\circ(X|\mathcal{F}^{M,W^0})^{-1},$ $\mathbbm{P}^{*}$ a.s. This means that for all $F\in\mathcal{F}^X$, $M(F)= \mathbbm{P}^{*}[X\in F|\mathcal{F}^{M,W^0}]$, $\mathbbm{P}^{*}$ a.s. By approximation it is enough to show that $M(\phi)= \mathbbm{P}^{*}[\phi(X)|\mathcal{F}^{M,W^0}]$, $\mathbbm{P}^{*}$ a.s. for any bounded continuous $\phi$, and by the functional monotone class theorem (see Theorem 19 in \cite{dellmeyer75} Chapter I), it is enough to show that for any $N$, $t_1,\cdots, t_N$, $\phi_1,\cdots,\phi_N\in\mathcal{C}_b(\mathbbm{R}^d)$, $\psi\in\mathcal{C}_b(\mathfrak{M}_+^1(\mathcal{X}))$, and $F\in\mathcal{F}^{W^0}$, we have:
	
\begin{equation}\label{EweakNash0}
	\mathbbm{E}^{\mathbbm{P}^{*}}\left[M\psi(M)\mathds{1}_F(W^0)\underset{i\leq N}{\Pi}\phi_i(X_{t_i})\right]
	= \mathbbm{E}^{\mathbbm{P}^{*}}\left[\psi(M)\mathds{1}_F(W^0)\underset{i\leq N}{\Pi}\phi_i(X_{t_i})\right].
\end{equation}
	For every $n$, we have that $M=\mathbbm{P}_n^{*}[\hat{X}^n|\mathcal{F}^{n,W^0}]$.
	In particular, $M$ is a.s. equal to an $\mathcal{F}^{n,W^0}$-measurable random measure, and $M = \mathbbm{P}_n^{*}(\hat{X}^n|\mathcal{F}^{n,W^0}\vee \mathcal{F}^{M})^{-1}$,  $\mathbbm{P}_n^{*}$ a.s., implying that for all $n\geq 0$ and $F\in  \mathcal{F}^{n,W^0}$, 
		\begin{equation}\label{EweakNash1}
			\mathbbm{E}^{\mathbbm{P}_n^{*}}\left[M\psi(M)\mathds{1}_F(W^0)\underset{i\leq N}{\Pi}\phi_i(X_{t_i})\right]
			= \mathbbm{E}^{\mathbbm{P}_n^{*}}\left[\psi(M)\mathds{1}_F(W^0)\underset{i\leq N}{\Pi}\phi_i(\hat{X}^n_{t_i})\right].
		\end{equation}
Since $\mathcal{F}^{n,W^0}$ is increasing in $n$, then for fixed $F\in\mathcal{F}^{n,W^0}$, \eqref{EweakNash1} above also holds under $\mathbbm{P}_k^{*}$ for all $k\geq n$. By the stable convergence of  $\mathbbm{P}_k^{*}$ to $\mathbbm{P}^*$, the left hand side of \eqref{EweakNash1} tends to the left hand side of \eqref{EweakNash0}. So in order to show that \eqref{EweakNash0} holds for this specific $F\in\mathcal{F}^{n,W^0}$, we will show that 
 \begin{equation}\label{EweakNash2}
	\mathbbm{E}^{\mathbbm{P}_k^{*}}\left[\psi(M)\mathds{1}_F(W^0)\underset{i\leq N}{\Pi}\phi_i(\hat{X}^k_{t_i})\right]
	\underset{k}{\longrightarrow}
	\mathbbm{E}^{\mathbbm{P}^{*}}\left[\psi(M)\mathds{1}_F(W^0)\underset{i\leq N}{\Pi}\phi_i(X_{t_i})\right].
\end{equation}
	We fix $\epsilon >0$. Since $(\mathbbm{P}_k^{*})_k$ is tight, we may fix a compact subset $K_{\epsilon}$ of $\mathcal{X}$ such that $\mathbbm{P}_k^{*}(\mathcal{X}\backslash K_{\epsilon})\leq \epsilon$ for all $k$, and such that $\hat{X}^k$ converges uniformly to $X$ on $K_{\epsilon}$. Eventually, $X$ and all the $\hat{X}^n$ are uniformly bounded by some constant $C>0$ on this $K_{\epsilon}$, and all the  $\phi_i$ are uniformly continuous on the closed ball $\bar{B}(0,C)$. In particular, there exists $k_0$ such that for all $k\geq k_0$, and $\omega\in K_{\epsilon}$, 
	\begin{equation}
		\left|\underset{i\leq N}{\Pi}\phi_i(\hat{X}^k_{t_i}(\omega))-\underset{i\leq N}{\Pi}\phi_i(\omega(t_i))\right|\leq \epsilon.
	\end{equation}
This implies that
	\begin{equation}
		\begin{array}{rcl}
		&&\hspace{-8mm}
		\left|\mathbbm{E}^{\mathbbm{P}_k^{*}}\left[\psi(M)\mathds{1}_F(W^0)\underset{i\leq N}{\Pi}\phi_i(\hat{X}^k_{t_i})\right]
		-\mathbbm{E}^{\mathbbm{P}^{*}}\left[\psi(M)\mathds{1}_F(W^0)\underset{i\leq N}{\Pi}\phi_i(X_{t_i})\right]\right|\\
		&&\hspace{-8mm}\leq
		\left| \mathbbm{E}^{\mathbbm{P}_k^{*}}\left[\psi(M)\mathds{1}_F(W^0)\underset{i\leq N}{\Pi}\phi_i(\hat{X}^k_{t_i})\right]
		-\mathbbm{E}^{\mathbbm{P}_k^{*}}\left[\psi(M)\mathds{1}_F(W^0)\underset{i\leq N}{\Pi}\phi_i(X_{t_i})\right]\right|\\
		&&\hspace{-3mm}+ \left| \mathbbm{E}^{\mathbbm{P}_k^{*}}\left[\psi(M)\mathds{1}_F(W^0)\underset{i\leq N}{\Pi}\phi_i(X_{t_i})\right]
		-\mathbbm{E}^{\mathbbm{P}^{*}}\left[\psi(M)\mathds{1}_F(W^0)\underset{i\leq N}{\Pi}\phi_i(X_{t_i})\right]\right|.
		\end{array}
	\end{equation}
	It is immediate that the second term tends to zero, and for the first one we have for all $k\geq k_0$:
	\begin{equation}
	\begin{array}{rcl}

	&&\left| \mathbbm{E}^{\mathbbm{P}_k^{*}}\left[\psi(M)\mathds{1}_F(W^0)\underset{i\leq N}{\Pi}\phi_i(\hat{X}^k_{t_i})\right]
	-\mathbbm{E}^{\mathbbm{P}_k^{*}}\left[\psi(M)\mathds{1}_F(W^0)\underset{i\leq N}{\Pi}\phi_i(X_{t_i})\right]\right|\\
	&\leq&\|\psi\|_{\infty}\mathbbm{E}^{\mathbbm{P}_k^{*}}\left[\left|\underset{i\leq N}{\Pi}\phi_i(\hat{X}^k_{t_i})-\underset{i\leq N}{\Pi}\phi_i(X_{t_i})\right|\right]\\
	&\leq&\|\psi\|_{\infty}\mathbbm{E}^{\mathbbm{P}_k^{*}}\left[\mathds{1}_{K_{\epsilon}}\left|\underset{i\leq N}{\Pi}\phi_i(\hat{X}^k_{t_i})-\underset{i\leq N}{\Pi}\phi_i(X_{t_i})\right|\right]
	\\
	&&+\|\psi\|_{\infty}\mathbbm{E}^{\mathbbm{P}_k^{*}}\left[\mathds{1}_{\mathcal{X}\backslash K_{\epsilon}}\left|\underset{i\leq N}{\Pi}\phi_i(\hat{X}^k_{t_i})-\underset{i\leq N}{\Pi}\phi_i(X_{t_i})\right|\right]\\
	&\leq& 2^N\epsilon\,\|\psi\|_{\infty}\underset{i\leq N}{\Pi}\|\phi_i\|_{\infty}+\epsilon\,\|\psi\|_{\infty}.
	\end{array}
	\end{equation}
	Since we may pick $\epsilon$ as small as we want, then we indeed have that \eqref{EweakNash2} holds and therefore that \eqref{EweakNash1} holds for any $F\in\mathcal{F}^{n,W^0}$. Since this is true for any $n$, then \eqref{EweakNash0} holds for any $F\in\bigcup_n\mathcal{F}^{n,W^0}$.
		
	$\bigcup_n\mathcal{F}^{n,W^0}$ is stable by finite intersection hence forms a $\pi$-system, see Definition 4.9 in \cite{aliprantis}. The sets of $F\in\mathcal{F}^{W^0}$ verifying \eqref{EweakNash0} form a monotone class (also called $\lambda$-system, see Definition 4.9 in \cite{aliprantis} again), so by the monotone class Theorem (or Dynkin's Lemma, see 4.11 in \cite{aliprantis}), we have that \eqref{EweakNash0} holds for all $F\in\sigma\left(\bigcup_n\mathcal{F}^{n,W^0}\right)$ which is equal to $\mathcal{F}^{W^0}$, see Remark \ref{R_discret} Item 1, and the proof is complete.
\end{prooff}


\section{McKean-Vlasov second order backward SDEs}\label{SMkV2BSDE}

From now on, we specialize the discussion to the no common noise context, i.e. $p_0=0$ and $W=W^1$. Consequently the distribution of $X$ is now deterministic as it is not conditioned anymore on the common noise. We shall work on the smaller canonical space $\Omega=\mathcal{X}\times \mathcal{Q}$ by appropriate projection of $\mathcal{W}$.


 In particular, notice that in the present context, the notions of weak and strong solutions of the MFG coincide.

This section contains the second main results of the paper. Our objective is to provide a characterization of the solution of the MFG in the no common noise context by means of a McKean-Vlasov second order backward SDE (2BSDE). This requires a non-degeneracy condition obtained by separating the control of the drift and the one of the diffusion coefficient. We therefore introduce two control sets $A$ and $B$ where the drift control process and the diffusion control process take values, respectively.

We denote by $\mathcal{Q}^A$ the set of relaxed controls, i.e. of measures $q$ on $[0,T]\times A$ such that $q(\cdot\times A)$ is equal to the Lebesgue measure. Each $q\in\mathcal{Q}^A$ may be identified with a measurable function $t\mapsto q_t$ from $[0,T]$ to $\mathfrak{M}_+^1(A)$ determined a.e. by $q(dt,da)=q_t(da)dt$.

We define similarly the set of relaxed controls $\mathcal{Q}^B$ by replacing the space $A$ with $B$, and we denote $\mathcal{Q}:= \mathcal{Q}^A\times \mathcal{Q}^B$ with corresponding canonical process $Q:=(Q^A,Q^B)$.
	
As in the previous section, we equip these spaces with their natural filtrations. We also introduce the right-continuous filtration $\mathbbm{F}^{X,+}$ defined for all $t\in[0,T]$  by $\mathcal{F}_t^{X,+}:=\bigcap_{n\geq 0}\mathcal{F}^X_{t+\frac{1}{n}}$.

We denote by $\mathfrak{SM}$ the set of 	all $\mathbbm{P}\in \mathfrak{M}_+^1(\mathcal{X})$ such that $X$ is a $\mathbbm{P}$-semimartingale  with absolutely continuous bracket. By Karandikar \cite{karandikar}, there exists an $\mathbbm{F}^X$-progressively measurable process, denoted by $\langle X\rangle$, which coincides with the quadratic variation of $X$, $\mathbbm{P}$-a.s. for every  $\mathbbm{P}\in \mathfrak{SM}$. We may then introduce the  process $\hat{\sigma}^2$ defined by 
	$$
	\hat{\sigma}^2_t 
	:= \limsup_{\epsilon\searrow 0}\frac{\langle X\rangle_t - \langle X\rangle_{t-\epsilon}}{\epsilon},\quad t\in[0,T].
	$$
	This process is progressively measurable and takes values in the set of  $d \times d$ non-negative symmetric matrices denoted $\mathbbm{S}_d^+$.

We now fix $\mathcal{P}\subset\mathfrak{SM}$. For all $\mathbbm{P}\in \mathcal{P}$, and $t\in[0,T]$ we denote by $\mathcal{F}_t^{X,+,\mathbbm{P}}$ the $\sigma$-field $\mathcal{F}_t^{X,+}$ augmented with $\mathbbm{P}$-null sets, and we denote by $\mathbbm{F}^{X,+,\mathcal{P}}$ the filtration given by 
$$\mathcal{F}^{X,+,\mathcal{P}}_t:=\underset{\mathbbm{P}\in\mathcal{P}}{\bigcap}\mathcal{F}^{X,+,\mathbbm{P}}_t,\quad t\in[0,T].$$

We say that a property holds $\mathcal{P}-$quasi surely (abbreviated as $\mathcal{P}-$q.s.) if it holds $\mathbbm{P}-$a.s. for all $\mathbbm{P}\in\mathcal{P}$. We also denote by $\mathbbm{S}^2(\mathcal{P})$ the collection of all c\`adl\`ag $\mathbbm{F}^{X,+,\mathcal{P}}$-adapted processes $S$ with 
 $$
 \big\| S \big\|_{\mathbbm{S}^2(\mathcal{P})}^2
 \;:=\;
 \sup_{\mathbbm{P}\in\mathcal{P}} \mathbbm{E}^{\mathbbm{P}}\Big[\sup_{t\le T}\,S_t^2\Big]<\infty.
 $$

Finally,  we denote by $\mathbbm{H}^2(\mathcal{P})$ the collection of all $\mathbbm{F}^{X,+,\mathcal{P}}-$progressively measurable processes $H$ with
 $$
 \big\| H \big\|_{\mathbbm{H}^2(\mathcal{P})}^2
 \;:=\;
 \sup_{\mathbbm{P}\in\mathcal{P}}
 \mathbbm{E}^{\mathbbm{P}}\Big[ \int_0^T H_t^\intercal d\langle X\rangle_tH_t \Big]
 \;=\;
 \sup_{\mathbbm{P}\in\mathcal{P}}
 \mathbbm{E}^{\mathbbm{P}}\Big[ \int_0^T H_t^\intercal \hat\sigma_t^2H_t dt\Big]
 \;<\;
 \infty.
 $$

\subsection{Controlled state process}

For a fixed $m\in\mathfrak{M}_+^1(\mathcal{X})$, the controlled state is defined by the relaxed SDE
\begin{equation}\label{controlled2}
 X_t
 = 
 X_0 + \int_{\!0}^t\!\!\int_{\!\!A\times B}\!\!(\sigma_r\lambda_r)(X,m,a,b)Q_r(da,db)dr 
                                + \!\!\int_{\!B}\!\!\sigma_r(X,m,b)N^{B}(db,dr),
\end{equation}
where $N^B$ is a martingale measure with intensity $Q^B_tdt$, 
 $$
 \lambda:[0,T]\times \mathcal{X}\times \mathfrak{M}_+^1(\mathcal{X})\times A
 \longrightarrow \mathbbm{R}^d,
 ~~
 \sigma: [0,T]\times \mathcal{X}\times \mathfrak{M}_+^1(\mathcal{X})\times B
 \longrightarrow \mathbbm{M}_{p,d}(\mathbbm{R}),
 $$
are progressively measurable maps (in the sense detailed in Subsection \ref{sec:controlled}). The generator of our controlled martingale problem is defined for $\phi\in\mathcal{C}^2_b(\mathbbm{R}^d)$, $(a,b)\in A\times B$, and $(t,x,y)\in [0,T]\times\mathcal{X}\times\mathbbm{R}^d$ by
 $$
 \mathcal{A}^{a,b,m}_{t,x}\phi(y)
 :=
 (\sigma_t\lambda_t)(x,m,a,b)\cdot D \phi(y)+ \frac{1}{2}
 \sigma_t
 \sigma^{\intercal}_t(x,m,b):D^2\phi(y).
$$

\begin{definition}\label{def_admissible4} Fix some $q_0\in A\times B$, and denote $Q^0$ the measure defined by $Q^0_t=\delta_{q_0}$, $t\in[0,T]$. For $(s, x)\in[0,T]\times\mathcal{X}$ and $m\in\mathfrak{M}_+^1(\mathcal{X})$, we denote
\\
{\rm (i)} $\overline{\mathcal{P}}^{m}_{s, x}$ the subset of all $\mathbbm{P}\in \mathfrak{M}_+^1(\Omega)$ s.t. $\mathbbm{P}[(X_{\wedge s},Q_{\wedge s}) =  (x_{\wedge s},Q^0_{\wedge s})]=1$, and 
	$$\phi(X_{t})-\int_s^{t}\!\!\!\int_{A\times B} \mathcal{A}^{a,b,m}_{r,X}\phi(X_r)\, Q_r(da,db)dr,\quad t\in[s,T],
	$$
is a $(\mathbbm{P},\mathbbm{F})$-martingale for all $\phi\in\mathcal{C}^2_b(\mathbbm{R}^d)$;
\\
{\rm (ii)} $\overline{\mathcal{M}}^{m}_{s, x}$ the subset of all $\mathbbm{P}\in \mathfrak{M}_+^1(\Omega)$ s.t. $\mathbbm{P}[(X_{\wedge s},Q_{\wedge s}) =  (x_{\wedge s},Q^0_{\wedge s}]=1$, and, 
	$$
	\phi(X_{t})-\frac{1}{2}\int_s^{t}\int_{B} 
		\sigma_t\sigma^{\intercal}_t(x,m,b):D^2\phi(X_r)\, Q^B_r(db)dr,\quad t\in[s,T],
	$$
is a $(\mathbbm{P},\mathbbm{F})$-martingale for all $\phi\in\mathcal{C}^2_b(\mathbbm{R}^d).$
\\
\\
For any $s,x,m$, we set $\mathcal{P}^m_{s,x}:=\{\mathbbm{P}\circ X^{-1}:\, \mathbbm{P}\in \overline{\mathcal{P}}^{m}_{s,x}\}$ and $\mathcal{M}^m_{s,x}:=\{\mathbbm{P}\circ X^{-1}:\, \mathbbm{P}\in \overline{\mathcal{M}}^{m}_{s,x}\}$.
\\
Finally, we simply denote $\overline{\mathcal{M}}^{m}:=\overline{\mathcal{M}}^{m}_{0,0}$, $\overline{\mathcal{P}}^{m}:=\overline{\mathcal{P}}^{m}_{0,0}$,  $\mathcal{M}^{m}:=\mathcal{M}^{m}_{0,0}$ and $\mathcal{P}^{m}:=\mathcal{P}^{m}_{0,0}$.
\end{definition}

\subsection{Solving a McKean-Vlasov 2BSDE}

Similar to the previous sections, let $\xi:\mathcal{X}\rightarrow \mathbbm{R}$ be a random variable, and $f:[0,T]\times \mathcal{X}\times \mathfrak{M}_+^1(\mathcal{X})\times A\times B\longrightarrow \mathbbm{R}$ a progressively measurable process, and denote the dynamic version of the value function of the individual optimization problem for all $(t,x,m)\in [0,T]\times \mathcal{X}\times \mathfrak{M}_+^1(\mathcal{X})$ by:
 $$
 V^{m}_t(x)
 \;:=\;
  \sup_{\mathbbm{P}\in\overline{\mathcal{P}}^{m}_{t, x}}
  \mathbbm{E}^{\mathbbm{P}}
  \left[\xi
         +\int_t^T\!\!\!\int_{\!\!A\times B}\!\!\!f_r(m,a,b)Q_r(da,db)dr\right].
 $$
The backward SDE characterization of the solution of the MFG requires to introduce the following nonlinearity: 
\begin{equation}\label{def_F}
		F_t(x,z,\Sigma,m)
	:=
	\!\!\!\!
	\sup_{{
			q\in\mathbf{Q}_t(x,\Sigma,m)
	}}
	H_t(x,z,m,q),
	~\quad
	H_{\cdot}(\cdot,z,\cdot,q)
	:=\!\!
	\int_{A\times B}\!\!(f+z\!\cdot\!\sigma\lambda) dq.
\end{equation}
For all $(t,x,z,\Sigma,m)\in[0,T]\times\mathcal{X}\times \mathbbm{R}^d\times\mathbbm{S}_d^+\times \mathfrak{M}_+^1(\mathcal{X})$, where
 \begin{equation}\label{Qc}
 \mathbf{Q}_t(x,\Sigma,m)
 :=
 \Big\{ q\in \mathfrak{M}_1^+(A)\otimes\mathfrak{M}_1^+(B): \int_B\sigma_t\sigma^{\intercal}_t(x,m,b)q^B(db)= \Sigma \Big\}.
 \end{equation}
The following condition is a restatement of Assumption \ref{H_existence} in the present context, with a sufficient condition for the wellposedness of the controlled SDE.

\begin{assumption}\label{A5}\
	\begin{itemize}
		\item $\xi,f,\lambda,\sigma$ are bounded;
		\item $\xi$ and $f_t,\lambda_t,\sigma_t$ for for all $t$, are continuous;
		\item $\lambda,\sigma$ are locally Lipschitz continuous in $x$ uniformly in $(t,a)$ at fixed $m$.
	\end{itemize}
\end{assumption}

\noindent We are now ready for our main characterization of a solution of the MFG from Theorem \ref{T_existence} in terms of the McKean-Vlasov second order backward SDE.

\begin{definition}\label{MkV2BSDE}
	We  say that $(m,Y,Z)\in \mathfrak{M}_+^1(\mathcal{X})\times\mathbbm{S}^2\big(\mathcal{P}^{m}\big)\times\mathbbm{H}^2\big(\mathcal{P}^{m}\big)$ 
	solves the \textbf{McKean-Vlasov 2BSDE}
	\begin{equation}\label{EqMkV2BSDE}
	Y_t = \xi +\int_t^T F_r(X,Z_r,\hat{\sigma}_r^2,m)dr - \int_t^TZ_rdX_r +U_T-U_t,~ t\in[0,T],~\mathcal{P}^m-\mbox{q.s.}
	\end{equation}
	if the following holds.
	\begin{enumerate}
		\item the process $U := Y_{\cdot} - Y_0 +\int_0^{\cdot}F_r(Z_r,\hat{\sigma}_r^2,m)dr - \int_0^{\cdot}Z_rdX_r $ is 
		is a $\mathbbm{P}$-càdlàg  supermartingale, orthogonal to $X$ for every $\mathbbm{P}\in\mathcal{P}^{m}$; 
		\item $m\in \mathcal{P}^{m}$ and $U$ is  an $m$-martingale.
	\end{enumerate}
\end{definition}
Notice that \eqref{EqMkV2BSDE} differs from the the notion introduced in \cite{soner_touzi_zhang2} and further developed in \cite{ptz,lrty} by the fact that both the nonlinearity and the set of probability measures depend on the law of $X$, denoted $m$. We emphasize that $m$ should not be understood as the law of $X$ under arbitrary $\mathbbm{P}\in \mathcal{P}^{m}$. Instead, $m$ denotes the "optimal" measure in $\mathcal{P}^m$, i.e. the one under which $U$ is a martingale.
In other words: the law $m$ which parametrizes the 2BSDE coincides with the optimal law for $X$ within the set of measures under which the 2BSDE holds.

We now state the main result of this second part of the paper, which proof is postponed to Section \ref{S6}.
\begin{theorem}\label{ThMkV}
	Let Assumption \ref{A5} hold true. Then, there exists a solution $(m,Y,Z)$ to the McKean-Vlasov 2BSDE \eqref{EqMkV2BSDE}.
	\\
	Moreover, $m$ is a solution of the Mean-Field game  with coefficients $\sigma\lambda,\sigma,f,\xi$ and $Y=V^m$ meaning that $
	Y_t(x) = V^{m}_t(x)$, for all $(t,x)\in[0,T]\times\mathcal{X}$.
\end{theorem}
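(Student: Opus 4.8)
The plan is to construct the triple by first extracting an equilibrium measure $m$ from the Mean Field Game and then invoking the classical 2BSDE representation of the individual control problem. First I would verify that Assumption \ref{A5} places us inside the scope of Theorem \ref{T_existence}: boundedness and continuity of $\xi,f,\sigma\lambda,\sigma$ give Assumption \ref{H_existence}(i), while local Lipschitz continuity of $\lambda,\sigma$ in $x$ yields well-posedness of the controlled SDE \eqref{controlled2}, i.e. Assumption \ref{H_existence}(ii). Since we are now in the no-common-noise regime, where weak and strong solutions of the MFG coincide, Theorem \ref{T_existence} produces an MFG solution: a measure $m\in\mathfrak{M}_+^1(\mathcal{X})$ together with an optimal control $\mathbbm{P}^*\in\overline{\mathcal{P}}^{m}$ attaining the supremum defining $V^{m}$ and satisfying the equilibrium identity $m=\mathbbm{P}^*\circ X^{-1}$. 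This $m$ is the first component of the desired triple and, by construction, solves the MFG with coefficients $\sigma\lambda,\sigma,f,\xi$, which is the second assertion of the theorem.

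Next I would freeze this $m$ and read the individual optimization problem as a standard relaxed diffusion-control problem with controlled diffusion coefficient, whose dynamic value is $V^{m}$. Here the separation of the drift control set $A$ from the diffusion control set $B$, together with the structural form (drift $\sigma\lambda$, diffusion $\sigma$), provides the non-degeneracy needed to represent $V^m$ by a 2BSDE. Invoking Proposition \ref{PGirs2}, I obtain $Z\in\mathbbm{H}^2(\mathcal{P}^{m})$ and a process $U$, orthogonal to $X$, such that $(Y,Z):=(V^{m},Z)$ satisfies the dynamics \eqref{EqMkV2BSDE} with the nonlinearity $F$ of \eqref{def_F} and with $U$ as in Item 1 of Definition \ref{MkV2BSDE}. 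Boundedness of $\xi$ and $f$ forces $Y=V^{m}$ to be bounded, hence $Y\in\mathbbm{S}^2(\mathcal{P}^{m})$, and the representation delivers $U$ as a càdlàg $\mathbbm{P}$-supermartingale orthogonal to $X$ for every $\mathbbm{P}\in\mathcal{P}^{m}$. This establishes Item 1 of Definition \ref{MkV2BSDE}.

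The heart of the argument is Item 2, where the McKean-Vlasov coupling is closed. Membership $m\in\mathcal{P}^{m}$ is immediate: since $\mathcal{P}^{m}=\{\mathbbm{P}\circ X^{-1}:\mathbbm{P}\in\overline{\mathcal{P}}^{m}\}$ and the equilibrium control satisfies $\mathbbm{P}^*\in\overline{\mathcal{P}}^{m}$ with $m=\mathbbm{P}^*\circ X^{-1}$, we obtain $m\in\mathcal{P}^{m}$ at once. For the $m$-martingale property of $U$ I would use that $U_0=0$ and that $U$ is an $m$-supermartingale by Item 1; it then suffices to show $\mathbbm{E}^{m}[U_T]=0$, since a supermartingale with vanishing initial value and vanishing terminal expectation is a martingale. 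This terminal identity is exactly the optimality of $\mathbbm{P}^*$: because $\mathbbm{P}^*$ attains the supremum defining $V^{m}$ and $m$ is its $X$-marginal, the representation of Proposition \ref{PGirs2} forces the minimal supermartingale $U$ to have no drift under $m$, equivalently $\mathbbm{E}^{m}[U_T]=0$. Combining the two items yields a solution $(m,Y,Z)$ of the McKean-Vlasov 2BSDE, while $Y=V^{m}$ holds by construction.

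I expect the main obstacle to be precisely this last identification: verifying that the measure under which $U$ becomes a true martingale is exactly the $X$-marginal of the MFG-optimal relaxed control $m$, and not merely some optimal measure of possibly different law. This requires matching the notion of optimality from the relaxed control problem (attainment of the supremum in $V^{m}$) with the drift-vanishing characterization of the minimal supermartingale supplied by Proposition \ref{PGirs2}, and checking that projecting the control $\mathbbm{P}^*$ from $\Omega=\mathcal{X}\times\mathcal{Q}$ onto $\mathcal{X}$ preserves both membership in $\mathcal{P}^{m}$ and the martingale property of $U$. A secondary technical point is to confirm the integrability $Z\in\mathbbm{H}^2(\mathcal{P}^{m})$ and the orthogonality of $U$ to $X$ uniformly over the whole family $\mathcal{P}^{m}$, which I would likewise draw from Proposition \ref{PGirs2}.
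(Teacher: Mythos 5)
Your overall architecture matches the paper's: extract an equilibrium $m$ and optimal control $\mathbbm{P}^*$ with $m=\mathbbm{P}^*\circ X^{-1}$ from Theorem \ref{T_existence} (which also gives $m\in\mathcal{P}^m$ and the MFG assertion), then set $Y=V^m$ and invoke the 2BSDE representation of the value function to get $Z$ and the supermartingale $U$, and finally verify Item 2 of Definition \ref{MkV2BSDE}. Two corrections on the middle step: the representation you need (existence of $Z\in\mathbbm{H}^2(\mathcal{P}^m)$, $Y\in\mathbbm{S}^2(\mathcal{P}^m)$, and the supermartingale/orthogonality properties of $U$) is Theorem \ref{Th2BSDE}, not Proposition \ref{PGirs2}, which only states the identity $V=\hat{\mathcal{Y}}$; this is not merely a citation slip, because you later lean on ``the representation of Proposition \ref{PGirs2}'' to do the heavy lifting, and that proposition does not by itself say anything about the drift of $U$ under $m$.

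The genuine gap is at the central step, the $m$-martingale property of $U$. Your reduction is valid: since $U_0=0$ and $U$ is an $m$-supermartingale, it suffices to prove $\mathbbm{E}^{m}[U_T]=0$. But your justification of that identity --- ``optimality of $\mathbbm{P}^*$ forces the minimal supermartingale to have no drift under $m$'' --- is an assertion of the conclusion, not an argument; you yourself flag it as the main obstacle without resolving it. To close it one must actually connect $\mathbbm{E}^{m}[U_T]$ to the value $V^m_0(0)$. The paper does this by taking the Doob--Meyer decomposition $U=M-K$ under $m$, performing the Girsanov change of measure $G(\mathbbm{P}^*)\in\mathcal{M}^{m}$ constructed in the proof of Proposition \ref{PGirs2}, applying the comparison theorem for BSDEs to get $Y_0\geq \mathbbm{E}^{G(\mathbbm{P}^*)}[Y^{G(\mathbbm{P}^*)}_0]$ with equality iff $K\equiv 0$, and then squeezing via \eqref{EqGirs} and the identity $V=\hat{\mathcal{Y}}$ (which gives the reverse inequality $Y_0=\sup_{\mathbbm{P}\in\mathcal{M}^m}\mathbbm{E}^{\mathbbm{P}}[Y^{\mathbbm{P}}_0]\geq\mathbbm{E}^{G(\mathbbm{P}^*)}[Y^{G(\mathbbm{P}^*)}_0]$). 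Alternatively, a direct route in your spirit exists: under $\mathbbm{P}^*$ the realized control satisfies $Q_r\in\mathbf{Q}_r(X,\hat\sigma^2_r,m)$ $dt\otimes d\mathbbm{P}^*$-a.e.\ (the controlled diffusion identifies $\hat\sigma^2$), hence $F_r(X,Z_r,\hat\sigma^2_r,m)\geq f^{Q}_r+Z_r\cdot b^{Q}_r$, and after compensating the drift of $\int Z_r dX_r$ one gets $\mathbbm{E}^{\mathbbm{P}^*}[U_T]\geq \mathbbm{E}^{\mathbbm{P}^*}\big[\xi+\int_0^T f^Q_r dr\big]-Y_0=0$, which combined with the supermartingale inequality $\mathbbm{E}^{m}[U_T]\leq 0$ yields the claim. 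Either of these chains of inequalities is the actual content of the theorem; without one of them your proposal restates what is to be proven.
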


\section{2BSDE representation of relaxed controlled problems}\label{S2BSDE}

The aim of this section is to introduce the tools needed for the proof of Theorem \ref{ThMkV}.
We keep working with the spaces introduced at the beginning of the previous section.
However, since marginal distribution $m$ is fixed throughout, we shall drop the dependence on this parameter throughout this section.

\subsection{Controlled state process, optimization problem and value function}

The controlled state process is defined by the relaxed SDE \eqref{controlled2}, and the dynamic version of the value function of this control problem is defined by setting for any $(s,  x) \in [0,T]\times \mathcal{X}$:
\begin{equation}\label{prob:2BSDE}
V_s( x)
:= 
\sup_{\mathbbm{P}\in\overline{\mathcal{P}}_{s, x}}
 \!\!J_s(\mathbbm{P}),
~\mbox{where}~
J_s(\mathbbm{P}):=\mathbbm{E}^{\mathbbm{P}}\!\!\left[\xi+\!\!\int_s^T\!\!\!\!\int_{A\times B}f_r(X,a,b)Q_r(da,db)dr\right],
\end{equation} 
where $\xi,f$ are jointly measurable, with $f$ progressively measurable in $(t,x)$, the spaces of probability measures $\overline{\mathcal{P}}_{s, x}, \overline{\mathcal{P}},\mathcal{M}_{s, x}, \mathcal{M}, \mathcal{P}_{s, x}, \mathcal{P}$ are defined as in Definition \ref{def_admissible4}, with dependence on $m$ dropped throughout.

\begin{proposition}\label{Vcontinuous}
	Under Assumption \ref{A5}, the set-valued map 
	$(s, x)\longmapsto \overline{\mathcal{P}}_{s, x}$ is a compact valued continuous correspondence, $V$ is continuous on $[0,T]\times\mathcal{X}$, and existence holds for the problem \eqref{prob:2BSDE}.
\end{proposition}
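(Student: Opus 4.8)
The plan is to deduce all three assertions from the compactification methodology for relaxed martingale problems, exactly in the spirit of the proof of Proposition \ref{PropReg}. The crucial reduction is that once the correspondence $(s,x)\mapsto\overline{\mathcal{P}}_{s,x}$ is shown to be compact valued and continuous, and the reward functional $(s,\mathbb{P})\mapsto J_s(\mathbb{P})$ is shown to be jointly continuous, the continuity of $V$ and the attainment of the supremum in \eqref{prob:2BSDE} follow at once from Berge's maximum theorem (Theorem \ref{Berge}). In particular, existence is simply the statement that the continuous function $J_s$ attains its maximum on the compact set $\overline{\mathcal{P}}_{s,x}$.

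First I would prove that each $\overline{\mathcal{P}}_{s,x}$ is compact, and more precisely that $\bigcup_{(s,x)\in K}\overline{\mathcal{P}}_{s,x}$ is relatively compact for every compact $K\subset[0,T]\times\mathcal{X}$. Tightness of the $X$-marginals follows from the boundedness of $\sigma\lambda$ and of $\sigma\sigma^{\intercal}$ granted by Assumption \ref{A5}, via the standard criterion for laws of diffusions with bounded characteristics (Theorem 1.4.6 in \cite{stroock}); tightness of the $Q$-marginals is automatic since $\mathcal{Q}=\mathcal{Q}^A\times\mathcal{Q}^B$ is compact, $A$ and $B$ being compact Polish. Closedness is the stability of the two defining conditions under weak convergence: the freezing constraint $\{(X_{\wedge s},Q_{\wedge s})=(x_{\wedge s},Q^0_{\wedge s})\}$ is a closed event, and the martingale problem passes to the limit because, by the continuity of $\sigma\lambda$ and $\sigma\sigma^{\intercal}$ in $(x,a,b)$, the test functionals $\phi(X_t)-\phi(X_u)-\int_u^t\int_{A\times B}\mathcal{A}^{a,b,m}_{r,X}\phi(X_r)Q_r(da,db)dr$ are continuous on $\Omega$ and bounded, hence uniformly integrable, so their expectations against bounded continuous $\mathcal{F}_u$-measurable test variables pass to the limit exactly as in the proof of Proposition \ref{PropReg} (i). Continuity of $J_s(\mathbb{P})$ in $(s,\mathbb{P})$ is likewise routine: $\xi$ and $f$ being bounded and continuous, $\mathbb{P}\mapsto\mathbb{E}^{\mathbb{P}}[\xi]$ is continuous, the running-cost integrand is a bounded continuous functional of $(X,Q)$, and the dependence on the lower limit $s$ is controlled by dominated convergence.

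Next I would establish continuity of the correspondence, i.e. both upper and lower hemicontinuity. Upper hemicontinuity reduces to the closed graph property, by the relative compactness just obtained together with the closed graph theorem for correspondences (Proposition \ref{alipran}); concretely, if $(s_n,x_n)\to(s,x)$, $\mathbb{P}_n\in\overline{\mathcal{P}}_{s_n,x_n}$ and $\mathbb{P}_n\to\mathbb{P}$, then the freezing constraint passes to the limit using $x_n\to x$ uniformly and $s_n\to s$, while the martingale problem passes to the limit by the same continuity-and-boundedness argument as above, now with a moving initial time. For lower hemicontinuity I would argue constructively: given $\mathbb{P}\in\overline{\mathcal{P}}_{s,x}$ and $(s_n,x_n)\to(s,x)$, I disintegrate $\mathbb{P}$ so as to isolate the relaxed control and the driving martingale measure, and then re-solve the state equation \eqref{controlled2} from the shifted initial datum $(s_n,x_n)$ keeping this control, producing rules $\mathbb{P}_n\in\overline{\mathcal{P}}_{s_n,x_n}$. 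Here the local Lipschitz continuity in $x$ of Assumption \ref{A5} is decisive: it yields well-posedness of the controlled SDE (the sufficient condition for Assumption \ref{H_existence} (ii) discussed after that assumption) and the attendant stability of solutions with respect to the initial data, which guarantees $\mathbb{P}_n\to\mathbb{P}$.

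The main obstacle is precisely this lower hemicontinuity in conjunction with the moving initial time $s_n\to s$. Closedness and upper hemicontinuity are soft consequences of compactness and of the continuity of the coefficients, but constructing, for each perturbed initial condition $(s_n,x_n)$, an admissible rule that is genuinely close to a prescribed rule at $(s,x)$ requires transporting the control along the reinitialized dynamics; the non-Markovian, path-dependent nature of $\sigma$ and $\lambda$ forbids a naive translation of paths, and the freezing before a variable time $s_n$ must be reconciled with the frozen segment of the target rule. This is exactly where the local Lipschitz hypothesis, rather than mere continuity, is indispensable, through the strong well-posedness and stability of \eqref{controlled2}.
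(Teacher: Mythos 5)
Your proposal is correct and follows essentially the same route as the paper: the paper likewise gets compactness from tightness of laws with bounded characteristics, derives continuity of the correspondence from the uniqueness (under the local Lipschitz hypothesis of Assumption \ref{A5}) of the admissible rule associated with a given control law --- it merely packages your separate uhc/lhc arguments as continuity of the single-valued solution map $\phi(s,x,\mathbbm{Q})$ composed with the constant correspondence $(s,x)\longmapsto\{(s,x)\}\times\mathfrak{M}_+^1(\mathcal{Q})$, where your ``transport of the control'' is exactly $\mathbbm{P}_n=\phi(s_n,x_n,\mathbbm{P}\circ Q^{-1})$ and its convergence is proved by tightness plus identification of the limit --- and it concludes with Berge's maximum theorem. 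The only cosmetic difference is that the paper applies its fixed-objective version of Berge's theorem (Theorem \ref{Berge}) after writing $V_s(x)=\max_{\mathbbm{P}\in\overline{\mathcal{P}}_{s,x}}J_0(\mathbbm{P})-\int_0^s f_r(x,q_0)dr$, whereas you invoke the parametrized version with the jointly continuous map $(s,\mathbbm{P})\longmapsto J_s(\mathbbm{P})$.
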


\begin{proof}
The compactness of  $\overline{\mathcal{P}}_{s, x}$ is a consequence of Proposition \ref{PropReg} (ii).
Notice that the correspondence $\Gamma:(s,x)\in[0,T]\times \mathcal{X}\longmapsto\{(s,x)\}\times \mathfrak{M}_+^1(\mathcal{Q})$  is continuous as the product of the continuous mapping $(s,x)\mapsto (s,x)$ and of the constant compact valued (hence continuous) correspondence $(s,x)\mapsto\mathfrak{M}_+^1(\mathcal{Q})$, see Theorem 17.28 in \cite{aliprantis}.
	
	Since $\lambda,\sigma$ are locally Lipschitz in $x$ uniformly in $(t,a,b)$, then for any $\mathbbm{Q}\in \mathfrak{M}_+^1(\mathcal{Q})$ there exists a unique weak solution of the corresponding SDE i.e. a unique $\mathbbm{P}\in \overline{\mathcal{P}}_{s,x}$ such that $\mathbbm{P}\circ Q^{-1}=\mathbbm{Q}$. 
	
	We denote $\phi(s,x,\mathbbm{Q})$ this unique $\mathbbm{P}$. It is clear that $(s,x)\mapsto \overline{\mathcal{P}}_{s,x}$ is equal to $\phi\circ \Gamma$, so by continuity of the composition of continuous correspondences (see Proposition \ref{alipran} Item 4), we are left to show that $\phi$ is continuous.
	
	We fix a converging sequence $(s_n,x_n,\mathbbm{Q}_n)\longrightarrow (s,x,\mathbbm{Q})$ in $[0,T]\times\mathcal{X}\times \mathfrak{M}_+^1(\mathcal{Q})$.
	Since $(x_n)_n$ converges, it is included in a compact subset $C$ of $\mathcal{X}$.
	 For all $n$, $\phi(s_n,x_n,\mathbbm{Q}_n)\circ X^{-1}$ is the law of a process which coincides with $x_n\in C$ on $[0,s_n]$ and which is a semi-martingale with bounded (uniformly in $n$) characteristics on $[s_n,T]$. Hence, adapting the proof of Proposition 6.2 in \cite{paperMPv2}, we have that $(\phi(s_n,x_n,\mathbbm{Q}_n)\circ X^{-1})_n$ is tight. Since $A,B$ are compact sets, then $(\phi(s_n,x_n,\mathbbm{Q}_n))_n$ is also tight. We now show that its only possible limiting point is $\phi(s,x,\mathbbm{Q})$, and the proof of the first statement will be complete. Assume (omitting to extract a converging subsequence) that $\phi(s_n,x_n,\mathbbm{Q}_n)$ tends to some $\mathbbm{P}\in \mathfrak{M}_+^1(\Omega)$. Clearly $\mathbbm{P}\circ Q^{-1}= \mathbbm{Q}$. Since $\phi(s,x,\mathbbm{Q})$ is the unique $\mathbbm{P}\in \overline{\mathcal{P}}_{s,x}$ such that $\mathbbm{P}\circ Q^{-1}=\mathbbm{Q}$, in order to show that $\mathbbm{P}= \phi(s,x,\mathbbm{Q})$ and to conclude, it is enough to show that $\mathbbm{P}\in\overline{\mathcal{P}}_{s,x}$. This is shown exactly as Proposition 6.3 in \cite{paperMPv2}. This shows the continuity of $(s, x)\longmapsto \overline{\mathcal{P}}_{s, x}$. 
	 \\
	 It remains to show that $V$ is continuous. We remark that for all $(s,x)$, we have $V_s(x)= \sup_{\mathbbm{P}\in\overline{\mathcal{P}}_{s, x}}J_0(\mathbbm{P}) - \int_0^sf_r(x,q_0)dr$.
	 Since $\xi,f$ are bounded and $\xi$ and $f_t$ for all $t$ are continuous, then $J_0$ is continuous.  As $(s, x)\longmapsto \overline{\mathcal{P}}_{s, x}$ is continuous and compact valued,  the supremum above is in fact a maximum, and  the Berge maximum theorem (see Theorem \ref{Berge}) states that $(s,x)\mapsto \max_{\mathbbm{P}\in\overline{\mathcal{P}}_{s, x}}J_0(\mathbbm{P})$ is continuous.
	 Finally, the dominated convergence theorem permits to show that $(s,x)\mapsto \int_0^sf_r(x,q_0)$ is continuous, hence $V$ is continuous.
\end{proof}

\subsection{2BSDE solved by the value function}

Recall the notations $F, H$, and $\mathbf{Q}$ introduced in \eqref{def_F}-\eqref{Qc}, again dropping the parameter $m$.

\begin{lemma}\label{qhat}
{\rm (i)} $F$ is jointly measurable, and uniformly Lipschitz in $z$;
\\
{\rm (ii)} There exists a   measurable mapping $\hat{q}:[0,T]\times\mathcal{X}\times \mathbbm{R}^d\times\mathbbm{S}_d^+\longrightarrow \mathfrak{M}_1^+(A)\otimes \mathfrak{M}_1^+(B)$ such that for all $(t,x,z,\Sigma)\in[0,T]\times\mathcal{X}\times \mathbbm{R}^d\times\mathbbm{S}_d^+$: 
$$ 
\hat{q}_t(x,z,\Sigma)\in\mathbf{Q}_t(x,\Sigma)
~\mbox{and}~
F_t(x,z,\Sigma)= H_t\big(x,z,\hat{q}_t(x,z,\Sigma)\big).
$$
\end{lemma}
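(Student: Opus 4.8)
The plan is to derive both assertions from the Measurable Maximum Theorem (see \cite{aliprantis}, Theorem 18.19), viewing $F$ as the value function and $\hat q$ as a measurable maximiser of the parametrised optimisation defining $F$ in \eqref{def_F}. Only the Lipschitz property in $z$ would be obtained by a direct estimate. Throughout, write $s:=(t,x,z,\Sigma)$ for the parameter, let $S:=[0,T]\times\mathcal{X}\times\mathbbm{R}^d\times\mathbbm{S}_d^+$ carry its Borel $\sigma$-field, and set $K:=\mathfrak{M}_1^+(A)\otimes\mathfrak{M}_1^+(B)$, which is a compact metrisable space for the weak topology since $A$ and $B$ are compact Polish.

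For the Lipschitz bound, fix $(t,x,\Sigma)$ and $q\in\mathbf{Q}_t(x,\Sigma)$. Since $\sigma$ and $\lambda$ are bounded by Assumption \ref{A5}, with $C_0:=\|\sigma\lambda\|_\infty$ we have, for all $z,z'\in\mathbbm{R}^d$,
\[
\big|H_t(x,z,q)-H_t(x,z',q)\big|
=\Big|\int_{A\times B}(z-z')\cdot(\sigma_t\lambda_t)(x,a,b)\,q(da,db)\Big|
\le C_0\,|z-z'|,
\]
uniformly in $q$. Using $|\sup_q g-\sup_q h|\le\sup_q|g-h|$ and taking the supremum over $q\in\mathbf{Q}_t(x,\Sigma)$, the same bound passes to $F$, which proves the Lipschitz claim in (i); note that $F$ is real-valued wherever $\mathbf{Q}_t(x,\Sigma)\neq\emptyset$, because $H$ is bounded there.

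Next I would check that $H$ is a Carathéodory objective on $S\times K$. For fixed $q$, the map $s\mapsto H_t(x,z,q)=\int_{A\times B}\big(f_t(x,a,b)+z\cdot(\sigma_t\lambda_t)(x,a,b)\big)q(da,db)$ is Borel measurable, by progressive measurability of $f,\sigma,\lambda$, their continuity in $x$, and their affine dependence on $z$; for fixed $s$, the map $q\mapsto H_t(x,z,q)$ is continuous for the product weak topology, since the integrand is bounded and continuous in $(a,b)$ by Assumption \ref{A5}. I would then analyse the constraint correspondence $\varphi:s\mapsto\mathbf{Q}_t(x,\Sigma)$ (which does not depend on $z$). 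The map $(t,x,q^B)\mapsto\int_B\sigma_t\sigma_t^\intercal(x,b)\,q^B(db)$ is again Carathéodory, so the graph $\{(s,q):q\in\varphi(s)\}$ is the zero-set of a jointly measurable map, hence Borel, while each $\varphi(s)$ is closed in the compact set $K$ and therefore compact. The feasibility set $D:=\{s:\varphi(s)\neq\emptyset\}$, which equals $\{\Sigma\in\mathrm{co}\,\{\sigma_t\sigma_t^\intercal(x,b):b\in B\}\}$, is the projection of this Borel graph and is measurable; on $D$, $\varphi$ is a nonempty compact-valued correspondence with measurable graph, hence weakly measurable by \cite{aliprantis}.

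Finally I would invoke the Measurable Maximum Theorem on $D$: it yields that $F$ is measurable on $D$ and that the argmax correspondence admits a measurable selector $\hat q$ with $\hat q_t(x,z,\Sigma)\in\mathbf{Q}_t(x,\Sigma)$ and $F_t(x,z,\Sigma)=H_t(x,z,\hat q_t(x,z,\Sigma))$, giving (ii) and the measurability in (i). Outside $D$ the set $\mathbf{Q}_t(x,\Sigma)$ is empty; there one sets $F$ by the convention $\sup\emptyset=-\infty$ (or any measurable extension) and $\hat q$ by an arbitrary fixed measurable choice, this region being irrelevant for \eqref{EqMkV2BSDE} since the realised $\hat\sigma_t^2$ lies in $D$, $\mathcal{P}^m$-q.s. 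I expect the main obstacle to be the clean verification of weak measurability of the equality-constrained correspondence $\varphi$ together with the treatment of its feasibility domain $D$, precisely because the constraint defines a thin level set in $K$ rather than a solid, open-type constraint, so ordinary continuity or lower hemicontinuity of $\varphi$ fails and one must argue through the measurable graph.
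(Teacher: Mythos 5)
Your proposal follows essentially the same route as the paper: both parts rest on the Measurable Maximum Theorem (Theorems 18.19 and 18.10 in \cite{aliprantis}), applied after checking that $H$ is a Carath\'eodory function on the parameter space times the compact set $\mathfrak{M}_1^+(A)\otimes\mathfrak{M}_1^+(B)$, and that the constraint correspondence $\mathbf{Q}$ is measurable with compact values; like the paper, you then deduce the joint measurability in (i) from the selector in (ii). The differences are in two places, and they cut both ways. For the Lipschitz bound, your direct estimate $|\sup_q H_t(x,z,q)-\sup_q H_t(x,z',q)|\le\sup_{q}|H_t(x,z,q)-H_t(x,z',q)|\le\|\sigma\lambda\|_\infty|z-z'|$ (legitimate since $\mathbf{Q}_t(x,\Sigma)$ does not depend on $z$) is more elementary than the paper's argument, which views $F_t(x,\cdot,\Sigma)$ as a convex supremum of affine maps and bounds its subgradient by $\|\sigma\lambda\|_\infty$ via \cite{hiriart2012fundamentals}; both are correct, yours is shorter. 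For the measurability of $\mathbf{Q}$, however, your route is the fragile one: you pass through the Borel graph, assert that the projection $D$ of this graph is measurable, and that a measurable graph yields weak measurability ``by \cite{aliprantis}''. Projections of Borel sets are in general only analytic, and in \cite{aliprantis} the implication from measurable graph to weak measurability is established only under a completeness hypothesis on the $\sigma$-algebra (their measurable projection theorem); as written, your step is rescued only by the deeper descriptive-set-theoretic fact that projections of Borel sets with compact sections are Borel (Novikov). The paper sidesteps all of this by citing Corollary 18.8 in \cite{aliprantis}: the zero-set correspondence $s\mapsto\{q:h(s,q)=0\}$ of a Carath\'eodory function into a compact metrizable space is measurable outright; you should replace your graph/projection argument by that citation. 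Finally, you are more careful than the paper on one point: the paper never addresses the case $\mathbf{Q}_t(x,\Sigma)=\emptyset$, where the lemma's conclusion cannot hold as stated and where Theorem 18.19 (which requires nonempty values) does not apply, whereas your restriction to the feasibility set $D$, with an arbitrary measurable extension outside it justified by the fact that the realized $\hat\sigma^2$ is feasible $\mathcal{P}^m$-q.s., is the honest way to read and use the statement.
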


\begin{proof}
(i) The joint measurability of $f$ follows from (ii), proved below, together with the measurability of $f,\lambda,\sigma$ (hence of $H$), and that of $\hat{q}$. We next observe that $H_t(x,\cdot,q)$ is an affine mapping with slope $\int_{A\times B}\sigma_r(x,b)\lambda_r(x,a)q(da,db)$. 
	In particular, $F_t(x,\cdot,\Sigma)$ is convex as the supremum of affine mappings. Denoting $\partial F_t(x,\cdot,\Sigma)$ its subgradient, since $\mathbf{Q}_t(x,\Sigma)$ is compact and since $q\mapsto H_t(x,z,q)$ is continuous for all $z$, we have (see \cite{hiriart2012fundamentals} Section D. Theorem 4.4.2) for all $z$ that $\partial F_t(x,\cdot,\Sigma)(z)\subset co\left(\left\{\int_{A\times B}\sigma_r(x,b)\lambda_r(x,a)q(da,db):\,q\in \mathbf{Q}_t(x,\Sigma)\right\}\right)$, where $co$ denotes the convex hull. In particular, $\partial F_t(x,\cdot,\Sigma)(z)$ is included in the centered closed ball of radius $\|\sigma\lambda\|_{\infty}$. This implies that the semidirectional derivatives of $ F_t(x,\cdot,\Sigma)$ exist at all $z$ and are bounded by $\|\sigma\lambda\|_{\infty}$, and therefore that this mapping is $\|\sigma\lambda\|_{\infty}$-Lipschitz. 
	\\
(ii) Our aim is to show the existence of a measurable selector for the correspondence $(t,x,z,\Sigma)\longmapsto\text{Arg}\max_{q\in \mathbf{Q}_t(x,\Sigma)} H_t(x,z,q)$.
	Theorems 18.19 and 18.10 in \cite{aliprantis} state that if $H$ is continuous in $q$ for fixed $(t,x,z)$ and measurable in $(t,x,z)$ for fixed $q$, and if $\mathbf{Q}$ is a measurable correspondence with compact values, then such a measurable selector indeed exists.
	
	By boundedness and continuity of $f_t,\lambda_t,\sigma_t$ for all $t$, it is immediate that $H$ verifies the conditions mentioned above.
	It is also clear that  $\mathbf{Q}_t(x,\Sigma)$ is a compact subset of $\mathfrak{M}_1^+(A)\otimes \mathfrak{M}_1^+(B)$ for all $t,x,\Sigma$. So we are left to show that $\mathbf{Q}$ is a measurable correspondence. 
	
	Finally, since $\mathbf{Q}_t(x,\Sigma)=\{q\in\mathfrak{M}_1^+(A)\otimes \mathfrak{M}_1^+(B): h(t,x,\Sigma,q)= 0\}$ with $\mathfrak{M}_1^+(A)\otimes \mathfrak{M}_1^+(B)$ compact and $h:(t,x,\Sigma,q) \mapsto \int_B\sigma\sigma^{\intercal}_t(x,b)q^B(db)-\Sigma$, which is measurable in $(t,x,\Sigma)$ at fixed $q$ and continuous in $q$ at fixed $(t,x,\Sigma)$, then by Corollary 18.8 in \cite{aliprantis}, $\mathbf{Q}$ is indeed measurable, and the proof is complete.
\end{proof}

We next recall the definition of a solution for the 2BSDE:
\begin{equation}\label{2BSDE}
	Y_t = \xi +\int_t^T F_r(Z_r,\hat{\sigma}_r^2)dr - \int_t^TZ_rdX_r +U_T-U_t, \quad \mathcal{P}\text{-q.s.}
\end{equation}
(see for instance \cite{lrty} Definition 3.9 in which the terminal time may be random). We introduce the additional notation
\begin{equation}
	\mathcal{P}_{t,\mathbbm{P}}:=\{\mathbbm{P}'\in\mathcal{P}:\, \mathbbm{P}'\text{ coincides with }\mathbbm{P}\text{ on } \mathcal{F}_t^{X,+}\}.
\end{equation}

\begin{definition}\label{Def2BSDE}
	A pair of processes $(Y,Z)\in\mathbbm{S}^2(\mathcal{P})\times\mathbbm{H}^2(\mathcal{P})$ is a solution of the 2BSDE \eqref{2BSDE}  if the process 
		$$ 
		U_t 
		:= 
		Y_t - Y_0 +\int_0^t F_r(Z_r,\hat{\sigma}_r^2)dr - \int_0^t Z_rdX_r,
		~~t\in[0,T], 
		$$
		is   a $\mathbbm{P}$-c\`adl\`ag  supermartingale, orthogonal to $X$ for all $\mathbbm{P}\in\mathcal{P}$ and if it satisfies the  minimality condition 
		$$U_t = \underset{\mathbbm{P}'\in\mathcal{P}_{t,\mathbbm{P}}}{\mbox{\rm essinf}}^{\mathbbm{P}} \mathbbm{E}^{\mathbbm{P}'}[U_T|\mathcal{F}_t^{X,+,\mathbbm{P}}],\quad t\in[0,T],\quad \mathbbm{P}\text{-a.s.}$$.
\end{definition}

\begin{remark}
	We recall that under the continuum hypothesis, the stochastic integral $\int_t^TZ_rdX_r$ may be defined for all $\omega$ independently of the choice of the probability in $\mathcal{P}$, see Nutz \cite{nutz2012pathwise}. 
\end{remark}

The aim of this subsection is to show the following representation result for the value function.
\begin{theorem}\label{Th2BSDE}
	Under Assumption \ref{A5}, $V\in\mathbbm{S}^2(\mathcal{P})$ and there exists  $Z\in\mathbbm{H}^2(\mathcal{P})$ such that $(V,Z)$ solves the 2BSDE \eqref{2BSDE}.
\end{theorem}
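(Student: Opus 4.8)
The plan is to follow the classical programme linking a stochastic control problem to its 2BSDE, carried out in the relaxed weak formulation of \eqref{prob:2BSDE}. Membership $V\in\mathbbm{S}^2(\mathcal{P})$ is essentially free: by Assumption \ref{A5} the data $\xi,f$ are bounded, hence $V$ is bounded; it is continuous by Proposition \ref{Vcontinuous}; and since $V_t=V_t(X)$ is a functional of the path of $X$ up to time $t$ it is $\mathbbm{F}^{X,+,\mathcal{P}}$-adapted, so its $\mathbbm{S}^2$-norm is dominated by $\|V\|_\infty$. The genuine content is the construction of $Z$ and the verification of the supermartingale, orthogonality and minimality requirements of Definition \ref{Def2BSDE}, and the backbone for all three is a dynamic programming principle for the family $(V_s)_s$.

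First I would establish the DPP: for every $(s,x)$ and every stopping time $\tau\in[s,T]$,
$$
V_s(x)=\sup_{\mathbbm{P}\in\overline{\mathcal{P}}_{s,x}}\mathbbm{E}^{\mathbbm{P}}\Big[\int_s^\tau\!\!\int_{A\times B}f_r(X,a,b)\,Q_r(da,db)\,dr+V_\tau(X)\Big].
$$
As usual this splits into a sub- and a super-dynamic-programming inequality, both resting on stability of the controlled martingale-problem sets $\overline{\mathcal{P}}_{s,x}$: that regular conditional distributions of an element of $\overline{\mathcal{P}}_{s,x}$ after $\tau$ lie in $\overline{\mathcal{P}}_{\tau,X}$ almost surely, and that a measurably indexed family of elements of $\overline{\mathcal{P}}_{\tau,X}$ can be concatenated with the pre-$\tau$ restriction into an element of $\overline{\mathcal{P}}_{s,x}$. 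These conditioning and pasting facts are of the type established in \cite{paperMPv2}, and the measurable selection of near-optimal controls needed for the super-inequality is afforded by the continuity of $(s,x)\mapsto\overline{\mathcal{P}}_{s,x}$ and of $V$ from Proposition \ref{Vcontinuous}.

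From the DPP I would deduce the generator-supermartingale property: for each $\mathbbm{P}\in\mathcal{P}$, lifted to $\overline{\mathbbm{P}}\in\overline{\mathcal{P}}$ carrying the control $Q$, the process $V_t(X)+\int_0^t\!\int f_r Q_r\,dr$ is an $\overline{\mathbbm{P}}$-supermartingale. Boundedness of $V$ and of the coefficients yields, through Doob--Meyer, a uniform energy bound on the bracket of $V$ and hence on its martingale part $N^{\mathbbm{P}}$; writing the Kunita--Watanabe decomposition $N^{\mathbbm{P}}=\int Z^{\mathbbm{P}}dX+L^{\mathbbm{P}}$ with $L^{\mathbbm{P}}$ orthogonal to $X$, I would identify $Z^{\mathbbm{P}}$ with the pathwise, $\mathbbm{P}$-independent process $Z:=d\langle V,X\rangle/d\langle X\rangle$ defined universally via Karandikar's bracket \cite{karandikar}; this aggregates the $Z^{\mathbbm{P}}$, the energy bound gives $Z\in\mathbbm{H}^2(\mathcal{P})$, and Nutz's pathwise integral \cite{nutz2012pathwise} renders $\int Z\,dX$ meaningful $\mathcal{P}$-q.s. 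The decisive point is that the running generator is exactly $F$: formally $V_t=v(t,X)$ solves the HJB equation $\partial_t v+\sup_{\Sigma}\{\tfrac12\Sigma:D^2v+F(Dv,\Sigma)\}=0$, so for each frozen $\Sigma$, and in particular for the realized $\hat{\sigma}^2_t$, one has $\partial_t v+\tfrac12\hat{\sigma}^2_t:D^2v+F_t(Z,\hat{\sigma}^2_t)\le 0$; since the drift of $\int Z\,dX$ under $\mathbbm{P}$ is $Z_r\!\cdot\!\int\sigma\lambda\,Q_r$, subtracting it leaves precisely this expression as the drift of $U_t=V_t-V_0+\int_0^tF_r(Z_r,\hat{\sigma}^2_r)dr-\int_0^tZ_rdX_r$, so that $U$ is a $\mathbbm{P}$-supermartingale, orthogonal to $X$ through the term $L^{\mathbbm{P}}$. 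Here one uses that $\hat{\sigma}^2_r$ is $\mathbbm{P}$-a.s. the realized quadratic-variation density, so the active control lies in $\mathbf{Q}_r(X,\hat{\sigma}^2_r)$ and the definition \eqref{def_F}--\eqref{Qc} of $F$ applies.

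Finally I would verify the minimality condition of Definition \ref{Def2BSDE}: since $V$ is a supremum, for $\mathbbm{P}\in\mathcal{P}$ and $t\in[0,T]$ the measurable selection in the DPP furnishes controls $\mathbbm{P}'\in\mathcal{P}_{t,\mathbbm{P}}$ that agree with $\mathbbm{P}$ up to $t$ and are $\varepsilon$-optimal on $[t,T]$, along which $U$ becomes an approximate martingale after $t$; this forces $\mathbbm{E}^{\mathbbm{P}'}[U_T\mid\mathcal{F}_t^{X,+,\mathbbm{P}}]$ to approach $U_t$, yielding the essinf identity. The rigorous bridge between the formal HJB computation and the probabilistic supermartingale statement is supplied by the measurable optimizer $\hat{q}$ of Lemma \ref{qhat}, which realizes $F$ within $\mathbf{Q}_t(X,\hat{\sigma}^2_t)$, together with the Lipschitz regularity of $F$ in $z$ from the same lemma. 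I expect the main obstacle to be the dynamic programming principle itself in the weak relaxed formulation---the conditioning, concatenation and measurable-selection steps for controlled martingale problems---together with the aggregation of the family $(Z^{\mathbbm{P}})_{\mathbbm{P}\in\mathcal{P}}$ into a single $\mathbbm{F}^{X,+,\mathcal{P}}$-measurable process; once these are secured, the supermartingale, orthogonality and minimality properties follow from the computations sketched above, possibly invoking the general 2BSDE wellposedness theory of \cite{ptz,lrty}.
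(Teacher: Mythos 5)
Your route is genuinely different from the paper's, and it has a gap at its central step. The paper never proves a dynamic programming principle for $V$: its proof of Theorem \ref{Th2BSDE} is a two-line reduction. Proposition \ref{PGirs2} shows, by Girsanov arguments in the relaxed setting, that $V$ coincides with $\hat{\mathcal{Y}}_t(x)=\sup_{\mathbbm{P}\in\mathcal{M}_{t,x}}\mathbbm{E}^{\mathbbm{P}}[Y^{t,x,\mathbbm{P}}_t]$, the supremum over martingale measures of solutions of $\mathbbm{P}$-BSDEs with generator $F$; since Proposition \ref{Vcontinuous} makes $t\mapsto\hat{\mathcal{Y}}_t(X_{\wedge t})$ continuous, the existence of $Z$ and the supermartingale, orthogonality and minimality properties then follow from the known construction of 2BSDE solutions out of exactly this representation (Theorem 4.6 in \cite{soner_touzi_zhang2}, Section 4.4 in \cite{ptz}), with no path regularization needed. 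Your sketch never establishes $V=\hat{\mathcal{Y}}$, so your closing appeal to \cite{ptz,lrty} cannot be used as stated: those results are theorems about $\hat{\mathcal{Y}}$, not about the relaxed control value $V$, and the bridge between the two is precisely the proposition you bypass.

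The concrete gap is the generator identification. What your DPP yields is that $V_{\cdot}+\int_0^{\cdot}f^Q_r\,dr$ is a supermartingale under each lifted admissible $\overline{\mathbbm{P}}$, say with Doob--Meyer defect $K'$; Kunita--Watanabe then gives $dV_t=-f^Q_t\,dt-dK'_t+Z_t\,d\tilde{X}_t+dL_t$, where $\tilde{X}:=X-\int_0^{\cdot}b^Q_r\,dr$ is the $\mathbbm{P}$-martingale part of $X$ and $L\perp\tilde{X}$. Substituting into $U$ gives
$dU_t=\bigl[F_t(Z_t,\hat{\sigma}^2_t)-f^Q_t-Z_t\cdot b^Q_t\bigr]dt-dK'_t+dL_t$,
and since the realized control lies in $\mathbf{Q}_t(X,\hat{\sigma}^2_t)$ the bracket is \emph{nonnegative}; so the supermartingale property of $U$ requires the nontrivial lower bound $dK'_t\geq\bigl[F_t(Z_t,\hat{\sigma}^2_t)-f^Q_t-Z_t\cdot b^Q_t\bigr]dt$, i.e.\ that the DPP defect dominates the fixed-diffusion suboptimality gap. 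This does not follow from the DPP supermartingale property alone. Your justification of it is the formal HJB computation (``$V_t=v(t,X)$ solves $\partial_tv+\sup_{\Sigma}\{\cdots\}=0$''), which presumes $V$ is a smooth function of $(t,X_t)$; here $V$ is path-dependent with no known regularity beyond continuity, so no It\^o expansion is available, and neither the optimizer $\hat{q}$ of Lemma \ref{qhat} nor the Lipschitz property of $F$ repairs this. The rigorous substitute is a comparison of $\mathbbm{P}$ with an equivalent measure realizing $\hat{q}$ (same diffusion, better drift and running reward), i.e.\ a Girsanov-plus-BSDE-comparison argument --- which is exactly the content of Proposition \ref{PGirs2}. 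Your remaining steps ($\mathbbm{S}^2$ membership, aggregation of $Z$ via Karandikar's bracket, minimality via $\varepsilon$-optimal selections) are reasonable in outline, but they all hang on this missing step.
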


To prove this result, we follow the same argument as in \cite{SonerTouziZhang} introducing
 \begin{equation}\label{def_Y_hat}
 \hat{\mathcal{Y}}_t(x)
 := 
 \sup_{\mathbbm{P}\in\mathcal{M}_{t,x}}
 \mathbbm{E}^{\mathbbm{\mathbbm{P}}}[Y^{t,x,\mathbbm{P}}_t]
 ~~\mbox{for all}~~
 (t,x)\in[0,T]\times\mathcal{X},
\end{equation}
where $(Y^{t,x,\mathbbm{P}},Z^{t,x,\mathbbm{P}})$ is the unique solution of the (well posed) BSDE on the space $(\mathcal{X},\mathcal{F}^X,\mathbbm{F}^{X,+},\mathbbm{P})$:
	\begin{equation}\label{BSDE-P}
		Y^{t,x,\mathbbm{P}}_s 
		= 
		\xi + \int_s^T F_r(Z^{t,x,\mathbbm{P}}_r,\hat{\sigma}_r^2)dr 
		                      - Z^{t,x,\mathbbm{P}}_rdX_r 
		                      - dM^{t,x,\mathbbm{P}}_r,
		~~ s\in[t,T],
	\end{equation}
for some martingale $M^{t,x,\mathbbm{P}}$, with $\langle X,M^{t,x,\mathbbm{P}}\rangle=0$, $\mathbbm{P}-$a.s.

\begin{proposition}\label{PGirs2}
	$V=\hat{\mathcal{Y}}$.
\end{proposition}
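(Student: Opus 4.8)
The plan is to establish the two inequalities $\hat{\mathcal{Y}}\le V$ and $V\le\hat{\mathcal{Y}}$ separately, both resting on a Girsanov change of measure that adds or removes the drift of $X$ while leaving its quadratic variation untouched, combined with the pointwise Hamiltonian identity $F=\sup_q H$ supplied by Lemma \ref{qhat}. The crucial structural observation is that the constraint defining $\mathbf{Q}_t(x,\Sigma)$ in \eqref{Qc} matches exactly the realized volatility: if $\mathbbm{P}$ carries a $b$-control $Q^B$, then $\int_B\sigma_r\sigma_r^{\intercal}(X,b)Q^B_r(db)=\hat{\sigma}^2_r$ holds $\mathbbm{P}$-a.s., so $Q_r\in\mathbf{Q}_r(X,\hat{\sigma}^2_r)$; conversely the measurable selector $\hat{q}_r(X,Z_r,\hat{\sigma}^2_r)$ from Lemma \ref{qhat}(ii) has a $b$-marginal producing precisely $\hat{\sigma}^2_r$. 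Throughout I use that the BSDE \eqref{BSDE-P} is well posed, since $F$ is uniformly Lipschitz in $z$ by Lemma \ref{qhat}(i) and $\xi,f$ are bounded by Assumption \ref{A5}.

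For $\hat{\mathcal{Y}}\le V$, I would fix $\mathbbm{P}\in\mathcal{M}_{t,x}$, solve \eqref{BSDE-P} to obtain $(Y^{t,x,\mathbbm{P}},Z^{t,x,\mathbbm{P}},M^{t,x,\mathbbm{P}})$, and then define $\mathbbm{P}^*$ by the Girsanov transformation adding to $X$ on $[t,T]$ the bounded drift $\int_{A\times B}(\sigma_r\lambda_r)\,\hat{q}_r(X,Z_r,\hat{\sigma}^2_r)(da,db)$. Since $\hat{q}^B$ reproduces the realized volatility, $\mathbbm{P}^*$ solves the controlled martingale problem with relaxed control $\hat{q}$, hence $\mathbbm{P}^*\in\overline{\mathcal{P}}_{t,x}$. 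The BSDE relation persists $\mathbbm{P}^*$-a.s. because the two measures are equivalent; taking $\mathbbm{E}^{\mathbbm{P}^*}$, using that $\int Z_r\,dX_r$ contributes only its drift term and that $M^{t,x,\mathbbm{P}}$, being orthogonal to $X$, remains a martingale under a drift change, the driver collapses to $F_r-Z_r\!\cdot\!\int\sigma_r\lambda_r\hat{q}_r=\int f_r\hat{q}_r$ by the very definition of $\hat{q}$. This gives $\mathbbm{E}^{\mathbbm{P}}[Y^{t,x,\mathbbm{P}}_t]=J_t(\mathbbm{P}^*)\le V_t(x)$, the first equality holding because $Y_t$ is frozen at $t$ where $\mathbbm{P}$ and $\mathbbm{P}^*$ agree; taking the supremum over $\mathcal{M}_{t,x}$ yields the inequality.

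For $V\le\hat{\mathcal{Y}}$, conversely I would fix $\mathbbm{P}\in\overline{\mathcal{P}}_{t,x}$ with control $Q$, strip its drift by Girsanov to obtain $\tilde{\mathbbm{P}}\in\overline{\mathcal{M}}_{t,x}$ with the same quadratic variation, so $\tilde{\mathbbm{P}}\circ X^{-1}\in\mathcal{M}_{t,x}$, and solve \eqref{BSDE-P} under $\tilde{\mathbbm{P}}$. Re-reading the same relation under $\mathbbm{P}$, taking expectations, and using that $Q_r\in\mathbf{Q}_r(X,\hat{\sigma}^2_r)$ so that $F_r(Z_r,\hat{\sigma}^2_r)\ge\int_{A\times B}(f_r+Z_r\!\cdot\!\sigma_r\lambda_r)Q_r$, I obtain $\mathbbm{E}^{\tilde{\mathbbm{P}}}[Y^{t,x,\tilde{\mathbbm{P}}}_t]=\mathbbm{E}^{\mathbbm{P}}[Y_t]\ge J_t(\mathbbm{P})$, whence $J_t(\mathbbm{P})\le\hat{\mathcal{Y}}_t(x)$ and, after the supremum over $\overline{\mathcal{P}}_{t,x}$, the claim. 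Combining the two inequalities delivers $V=\hat{\mathcal{Y}}$.

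The main obstacle I anticipate is the rigorous justification of the change of measure in this non-dominated, relaxed setting: establishing that removing the drift of a $\mathbbm{P}\in\overline{\mathcal{P}}_{t,x}$ yields a genuine element of $\overline{\mathcal{M}}_{t,x}$, and that, in the reverse direction, superposing the selected drift $\hat{q}$ produces a bona fide solution of the controlled martingale problem. This requires checking that the Girsanov density is a true martingale (guaranteed by the boundedness of $\sigma\lambda$ in Assumption \ref{A5}), that it does not perturb the volatility, and, most delicately, that the orthogonal martingale $M^{t,x,\mathbbm{P}}$ retains both its martingale property and its orthogonality to $X$ after the drift change. These are precisely the points at which one invokes the argument of \cite{SonerTouziZhang}.
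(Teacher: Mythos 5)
Your overall architecture (two inequalities, each via a Girsanov change of measure that shifts the drift while preserving the quadratic variation, combined with the maximizer $\hat q$ of Lemma \ref{qhat} and the realized--volatility identity $\int_B\sigma_r\sigma_r^{\intercal}(X,b)Q^B_r(db)=\hat\sigma^2_r$) is the paper's architecture, and your direction $\hat{\mathcal{Y}}\le V$ is essentially the paper's second step: for $\mathbbm{P}\in\mathcal{M}_{t,x}$ one solves \eqref{BSDE-P}, adds the drift $b^{\hat q}_r:=\int_{A\times B}\sigma_r(b)\lambda_r(a)\hat q_r(da,db)$ by Girsanov, and checks that the resulting law paired with the control $\hat q$ lies in $\overline{\mathcal{P}}_{t,x}$ and has cost $\mathbbm{E}^{\mathbbm{P}}[Y^{t,x,\mathbbm{P}}_t]$. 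The delicate point you flag there (the orthogonal martingale $M^{t,x,\mathbbm{P}}$ must survive the measure change) is resolvable in that direction exactly as the paper does in Section \ref{S6}: since $M^{t,x,\mathbbm{P}}$ exists \emph{before} the martingale measure $N^B$ is constructed, $N^B$ can be \emph{chosen} orthogonal to it (Proposition III-9 of \cite{EK_Mele}), so the Girsanov kernel is orthogonal to $M^{t,x,\mathbbm{P}}$. Note this tool comes from \cite{EK_Mele}, not from \cite{SonerTouziZhang} as you suggest.

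The genuine gap is in your direction $V\le\hat{\mathcal{Y}}$, at the step ``re-read the relation under $\mathbbm{P}$ and take expectations''. That step needs $\mathbbm{E}^{\mathbbm{P}}[M_T-M_t]=0$ for the orthogonal martingale $M$ of the BSDE solved under the drift-stripped measure $\tilde{\mathbbm{P}}$, i.e.\ it needs $\langle M,L\rangle=0$ where $L$ is the kernel of the density $d\tilde{\mathbbm{P}}/d\mathbbm{P}$. In this relaxed, possibly degenerate setting the natural kernel is an integral against the martingale measure $N^B$ (this is how the paper hits the drift $b^Q_r=\int_{A\times B}\sigma_r(b)\lambda_r(a)Q_r(da,db)$), and orthogonality of $M$ to $X$ does \emph{not} imply orthogonality to $N^B$-integrals: $N^B$ carries strictly more randomness than $X$. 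Moreover the fix just described is now unavailable, because your ordering is circular: $N^B$ is needed to build $\tilde{\mathbbm{P}}$, which is needed to define $M$, so $N^B$ cannot be chosen orthogonal to $M$. This circularity is precisely what the paper's first step is engineered to avoid, and explains its two features you dropped: it first replaces $Q$ by a feedback control $\bar q(X)$ (Theorem 2.7 of \cite{nicole1987compactification}) so that all objects live on the path space of $X$, then solves \emph{under $\mathbbm{P}$ itself} the linear BSDE with driver $f^{\bar q}+z\,b^{\bar q}$ --- a mere conditional expectation, whose orthogonal martingale exists prior to any choice of $N^B$ --- and only afterwards passes to $G(\mathbbm{P})$ and compares with the $F$-BSDE via the comparison theorem. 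Your plan collapses these two BSDEs into one and thereby loses control of the orthogonal part.

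The gap is reparable within your architecture, but by a different idea than the one you invoke. Since $Q_r=Q^A_r\otimes Q^B_r$, writing $\bar\sigma_r:=\int_B\sigma_r(b)Q^B_r(db)$ one has $b^Q_r=\bar\sigma_r\int_A\lambda_r(a)Q^A_r(da)$ and, by Jensen, $\bar\sigma_r\bar\sigma_r^{\intercal}\le\hat\sigma^2_r$; hence $b^Q_r$ lies in the range of $\hat\sigma^2_r$, the kernel $\eta_r:=(\hat\sigma^2_r)^{+}b^Q_r$ (Moore--Penrose inverse) satisfies $\hat\sigma^2_r\eta_r=b^Q_r$ and $\eta_r^{\intercal}\hat\sigma^2_r\eta_r\le\|\lambda\|_\infty^2$, so Novikov holds and one may strip the drift with the density $\mathcal{E}\bigl(-\int\eta_r\,dX^c_r\bigr)$, where $X^c$ is the $\mathbbm{P}$-martingale part of $X$. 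With this choice the Girsanov kernel is an integral against $X$ itself, and then $\langle M,\cdot\rangle=0$ \emph{does} follow from orthogonality of $M$ to $X$, closing your argument --- and in fact bypassing both the feedback reduction and the comparison step of the paper. As written, however, your proposal neither supplies this kernel nor can borrow the paper's, so the expectation step stands unjustified.
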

\begin{proof}
Denote $f^Q_r:=\int_{A\times B}f_r(a,b)Q_r(da,db)$, $b^Q_r:=\int_{A\times B}\sigma_r(b)\lambda_r(a)Q_r(da,db)$, and fix $(t,x)\in[0,T]\times \mathcal{X}$.
	\\
{\bf 1.} We first prove that $V_t(x)\leq\hat{\mathcal{Y}}_t(x)$. For an arbitrary $\mathbbm{P}\in\overline{\mathcal{P}}_{t,x}$, it follows from Theorem 2.7 in \cite{nicole1987compactification} that there exists an $\mathbbm{F}^X$-progressively measurable process $\bar{q}$ such that the \textit{feedback control} $\mathbbm{P}\circ(X,\bar{q}(X))^{-1}$ belongs to $\overline{\mathcal{P}}_{t,x}$ and  $\mathbbm{E}^{\mathbbm{P}}\left[\xi+\int_t^Tf^Q_rdr\right]=\mathbbm{E}^{\mathbbm{P}}\left[\xi+\int_t^Tf^{\bar{q}(X)}_rdr\right]$.
 
We now work on the filtered space $(\mathcal{X},\mathcal{F}^X,\mathbbm{F}^{X,+})$. Even though $\mathbbm{P}$ is defined on the larger space $(\Omega,\mathcal{F})$, we will often write $\mathbbm{P}$ instead of $\mathbbm{P}\circ X^{-1}$ when there can be no confusion.
By Theorem IV-2 in \cite{EK_Mele}, there exists on a bigger space a martingale measure $N^B$ with intensity $\bar{q}^B(X)_tdt$ such that $$dX_s = b^{\bar{q}(X)}_sds +\int_B \sigma_s(X,b)N^B(db,ds).
$$
Notice that the process 
	$$
	L_s
	\;:=\;
	-\int_t^s \left(\int_A\lambda_r(X,a)\bar{q}^A_r(X)(da)\right) \int_BN^B(db,dr),
	~~s\in[t,T],
	$$
is a continuous martingale with bounded quadratic variation. Then we may introduce the probability measure $G(\mathbbm{P})$ by:
	$$
	\frac{dG(\mathbbm{P})}{d\mathbbm{P}} 
	\;=\; 
	\mathcal{E}(L)
	\;:=\; 
	e^{L-\frac12\langle L\rangle}.
	$$ 
Since $\langle X,L\rangle = -\langle \int_t^{\cdot}\int_B\sigma_r(b)N^B(db,dr),L\rangle=\int_t^{\cdot}b^{\bar{q}(X)}_rdr$, it follows from the Girsanov Theorem that $X$ is a $G(\mathbbm{P})$-martingale with unchanged quadratic variation $\langle X\rangle=\int_t^{\cdot}\int_{ B}\sigma\sigma^{\intercal}_r(X,b)\bar{q}(X)_r^B(db)dr$, $G(\mathbbm{P})$-a.s. Hence $G(\mathbbm{P})\in\mathcal{M}_{t,x}$.
 
Considering on $(\mathcal{X},\mathcal{F}^X,\mathbbm{F}^{X,+},G(\mathbbm{P}))$ the BSDE
	\begin{equation}
\bar{Y}^{t,x,G(\mathbbm{P})}_s = \xi + \int_s^T\left(f^{\bar{q}(X)}_r+\bar{Z}^{t,x,G(\mathbbm{P})}_rb^{\bar{q}(X)}_r\right)dr - \bar{Z}^{t,x,G(\mathbbm{P})}_rdX_r -d\bar{M}^{t,x,G(\mathbbm{P})}_r,
\end{equation}
for $s\in[t,T],$ we will now show that  we have
		\begin{equation}\label{EqGirs}
			\mathbbm{E}^{\mathbbm{P}}\left[\xi+\int_t^Tf^Q_rdr\right]
			\;=\; 
			\mathbbm{E}^{G(\mathbbm{P})}[\bar{Y}^{t,x,G(\mathbbm{P})}_t]
			\;\le\; 
			\mathbbm{E}^{G(\mathbbm{P})}[Y^{t,x,G(\mathbbm{P})}_t],
		\end{equation} 
and this  implies that $V_t(x)\leq\hat{\mathcal{Y}}_t(x)$.
In order to show that the equality in \eqref{EqGirs} holds, we consider under $\mathbbm{P}$ the solution $(\tilde{Y},\tilde{Z},\tilde{M})$ of the  BSDE 
	$$
	\tilde{Y}_s 
	= 
	\xi +\int_s^Tf^{\bar{q}(X)}_rdr - \tilde{Z}_rdX_r+\tilde{Z}_rb^{\bar{q}(X)}_rdr - d\tilde{M}_r,
	\quad s\in [t,T].
	$$ 
As $X-\int_t^{\cdot}b^{\bar{q}(X)}_rdr$ is a $\mathbbm{P}$-martingale, then 
	$\tilde{Y}+\int_t^{\cdot}f^{\bar{q}(X)}_rdr$ is also a $\mathbbm{P}$-martingale, hence by Girsanov Theorem, $\tilde{Y} +\int_t^{\cdot}f^{\bar{q}(X)}_rdr - \langle \tilde{Y}, L\rangle$ is  a $G(\mathbbm{P})$-martingale. 	
	Since $X$ is a $G(\mathbbm{P})$-martingale, we obtain by standard decomposition that
	$$\tilde{Y}_s = \xi +\int_s^Tf^{\bar{q}(X)}_rd_r -\int_s^Td\langle \tilde{Y}, L\rangle_r -\int_s^TZ'_rdX_r + (M'_T-M'_s),\quad s\in[t,T]$$ 
	for some process $Z'$ and some martingale $M'$ orthogonal to $X$.
	 Then $\langle \tilde{Y}, L\rangle =  \int_t^{\cdot}Z'_rd\langle X,L\rangle =- \int_t^{\cdot}Z'_rb^{\bar{q}(X)}_rdr$, and therefore
	 $$\tilde{Y}_s = \xi +\int_s^T\left(f^{\bar{q}(X)}_r+Z'_rb^{\bar{q}(X)}_r\right)dr -\int_s^TZ'_rdX_r + (M'_T-M'_s),\, s\in[t,T],$$ 
	 which implies that $\tilde{Y}=\bar{Y}^{t,x,G(\mathbbm{P})},\quad G(\mathbbm{P})$-a.s., by uniqueness of the solution of a BSDE. 
	 In particular, 
	 $$
	 \mathbbm{E}^{\mathbbm{P}}\!\!\left[\xi+\int_t^T\!\!\!\!f^Q_rdr\right]
	 =
	 \mathbbm{E}^{\mathbbm{P}}\!\!\left[\xi+\int_t^T\!\!\!\!f^{\bar{q}(X)}_rdr\right]
	 =
	 \mathbbm{E}^{\mathbbm{P}}[\tilde{Y}_t]  
	 = 
	 \mathbbm{E}^{G(\mathbbm{P})}[\tilde{Y}_t] 
	 = 
	 \mathbbm{E}^{G(\mathbbm{P})}[\bar{Y}^{t,x,G(\mathbbm{P})}_t].
	 $$
	By the comparison theorem for BSDEs (see Theorem 2.2 in \cite{el1997backward} for instance), and the definition of $F$ and $\hat{\sigma}^2$ we have that $\mathbbm{E}^{G(\mathbbm{P})}[\bar{Y}^{t,x,G(\mathbbm{P})}_t]\leq \mathbbm{E}^{G(\mathbbm{P})}[Y^{t,x,G(\mathbbm{P})}_t]$, and therefore the inequality in \eqref{EqGirs} holds.
\\
\\
{\bf 2.} We next prove the converse inequality $V_t(x)\geq \hat{\mathcal{Y}}_t(x)$. Recall the maximizer $\hat q$ introduced in Lemma \ref{qhat}, and denote $\hat{q}_r:=\hat{q}_r(X,Z^{t,x,\mathbbm{P}}_r,\hat{\sigma}^2_r)$. Then, we have for all $\mathbbm{P}\in\mathcal{M}_{t,x}$ that
\begin{equation}\label{EqGirs2}
	Y^{t,x,\mathbbm{P}}_s 
	= 
	\xi
	+ \int_s^T H_r\left(X,Z^{t,x,\mathbbm{P}}_r,\hat{q}_r\right)dr 
	                 - Z^{t,x,\mathbbm{P}}_r dX_r 
	                 + dM^{t,x,\mathbbm{P}}_r,~s\in[t,T],~ \mathbbm{P}-\text{a.s.}
\end{equation}
Proceeding as in the first part of this proof, we consider the change of measure $\frac{d\mathbbm{Q}}{d\mathbbm{P}}:=\mathcal{E}(\hat{L})$ where $ \hat{L}:=-\int_t^{\cdot} \int_{A\times B}\lambda_r(X,a)\hat{q}^A_r(da)dN^B(db,dr).$ As $\langle X, \hat{L}\rangle = -\int_t^{\cdot} b^{\hat{q}}_rdr$ $\mathbbm{P}$-a.s., it follows from \eqref{EqGirs2} that 
$\langle Y^{t,x,\mathbbm{P}}, \hat{L}\rangle = -\int_t^{\cdot}Z_r^{t,x,\mathbbm{P}}b^{\hat{q}}_rdr,$
and we conclude from the Girsanov Theorem that $ Y^{t,x,\mathbbm{P}}$ is a $\mathbbm{Q}$-martingale.
Finally, let $\hat{G}(\mathbbm{P}):=\mathbbm{Q}\circ(X,\hat{q})^{-1}$. By construction, $\hat{G}(\mathbbm{P})$ belongs to $\overline{\mathcal{P}}_{t,x}$, and we have
 $$
	\mathbbm{E}^{\hat{G}(\mathbbm{P})}\left[\xi+\int_t^Tf^Q_rdr\right] = \mathbbm{E}^{\mathbbm{Q}}\left[\xi+\int_t^Tf^{\hat{q}}_rdr\right] =\mathbbm{E}^{\mathbbm{Q}}[Y^{t,x,\mathbbm{P}}_t] = \mathbbm{E}^{\mathbbm{P}}[Y^{t,x,\mathbbm{P}}_t].
 $$
By the arbitrariness of $\mathbbm{P}\in\mathcal{M}_{t,x}$, and the fact that $\hat{G}(\mathbbm{P})$ belongs to $\overline{\mathcal{P}}_{t,x}$, this implies that $V_t(x)\geq\hat{\mathcal{Y}}_t(x)$.
\end{proof}
\begin{prooff}{} {\bf \hspace{-5mm} of Theorem \ref{Th2BSDE}} By the previous proposition, we have that  $V=\hat{\mathcal{Y}}$. Moreover, $(t,x)\mapsto V_t(x)$ is continuous by Proposition \ref{Vcontinuous}, so $t\mapsto \hat{\mathcal{Y}}_t(X_{\wedge t})$ is a continuous process.
	The present theorem therefore follows from Theorem 4.6 in \cite{soner_touzi_zhang2}  or Section 4.4 of \cite{ptz}, where we do not have to consider the path regularization of $t\mapsto \hat{\mathcal{Y}}_t(X_{\wedge t})$ as we have shown that it is continuous in the present setup.
\end{prooff}

\section{Proof of Theorem \ref{ThMkV}}\label{S6}
We will make use of Theorem \ref{T_existence} in a setup with no common noise. In particular, with the notations of Section \ref{SMFG}, we have $p_0=0$, $W=W^1$ and $M$ is deterministic. 

By Theorem \ref{T_existence}, there exist $m\in \mathfrak{M}_+^1(\mathcal{X})$ and $\widehat{\mathbbm{P}}^*\in \mathfrak{M}_+^1(\mathcal{X}\times\mathcal{Q}\times\mathcal{W})$ which maximizes $\mathbbm{E}^{\mathbbm{P}}[\xi+\int_0^Tf^Q_rdr]$ within all elements $\mathbbm{P}\in\mathfrak{M}_+^1(\mathcal{X}\times\mathcal{Q}\times\mathcal{W})$ satisfying Definition \ref{def_admissible} Item 1, with $m$ replacing $M$, and such that $\widehat{\mathbbm{P}}^*\circ X^{-1}=m$.

Let $\mathbbm{P}^*:=\widehat{\mathbbm{P}}^*\circ(X,Q)^{-1}$. We have  $m=\mathbbm{P}^*\circ X^{-1}$ and $\mathbbm{P}^*\in\overline{\mathcal{P}}^{m}$. In particular $m\in\mathcal{P}^m$, as required in Definition \ref{MkV2BSDE}.

We remark that $\xi,f$ do not depend in $W$. 
For any $\mathbbm{Q}\in \overline{\mathcal{P}}^{m}$, there exists $\widehat{\mathbbm{Q}}\in\mathfrak{M}_+^1(\mathcal{X}\times\mathcal{Q}\times\mathcal{W})$ satisfying Definition \ref{def_admissible} Item 1. and such that  $\mathbbm{Q}:=\widehat{\mathbbm{Q}}\circ(X,Q)^{-1}$, hence such that 
 \begin{eqnarray*}
 \mathbbm{E}^{\mathbbm{Q}}\left[\xi+\int_0^Tf^Q_rdr\right] 
 &=& 
 \mathbbm{E}^{\widehat{\mathbbm{Q}}}\left[\xi+\int_0^Tf^Q_rdr\right] 
 \\
 &\le& 
 \mathbbm{E}^{\widehat{\mathbbm{P}}^*}\left[\xi+\int_0^Tf^Q_rdr\right] 
 \;=\; \mathbbm{E}^{\mathbbm{P}^*}\left[\xi+\int_0^Tf^Q_rdr\right] .
 \end{eqnarray*}
This shows that $m=\mathbbm{P}^*\circ X^{-1}$ and
	\begin{equation}\label{E1S5}
		 V^{m}_0(0)
		\;=\;
		\mathbbm{E}^{\mathbbm{P}^*}\left[\xi+\int_0^Tf^Q_rdr\right]
		\;=\;
		\sup_{\mathbbm{P}\in\overline{\mathcal{P}}^{m}}
		\mathbbm{E}^{\mathbbm{P}}\left[\xi+\int_0^Tf^Q_rdr\right],
	\end{equation}
meaning that $m$ is a solution of the Mean-Field game on the restricted canonical space $\Omega=\mathcal{X}\times\mathcal{Q}$.

We set $Y_t=V^{m}_t(X_{\wedge t}),$ $t\in[0,T]$. By Theorem \ref{Th2BSDE}, $Y\in \mathbbm{S}^2(\mathcal{P}^m)$ and  there exists a process $Z\in\mathbbm{H}^2(\mathcal{P}^{m})$, such that  the process $U$ defined by
	\begin{equation}\label{P2S5}
	U := Y_{\cdot} - Y_0 +\int_0^{\cdot}F_r(Z_r,\hat{\sigma}_r^2,m)dr - \int_0^{\cdot}Z_rdX_r
	\end{equation}
	is a c\`adl\`ag $\mathbbm{P}$-supermartingale orthogonal to $X$ for all $\mathbbm{P}\in\mathcal{P}^{m}$. Consider the Doob-Meyer decomposition of the $m-$supermartingale $U=M-K$ into an $m$-martingale $M$ orthogonal to $X$, and an $m$-a.s. nondecreasing process $K$.
We define $\bar{q},N^B,L$ and $G(\mathbbm{P}^*)$ as in the proof of Proposition \ref{PGirs2}.
	Since $M$ is orthogonal to $X$ then $N^B$ can be taken orthogonal to $M$ (see Proposition III-9 in \cite{EK_Mele}) so $L$ is orthogonal to $M$. By the Girsanov Theorem, $M$ is also a $G(\mathbbm{P}^*)$-martingale. Then, it follows from \eqref{P2S5} that $(Y,Z)$ solves the BSDE
	$$
	Y_t = \xi +\int_t^T F_r(Z_r,\hat{\sigma}_r^2,m)dr+dK_r-Z_rdX_r - dM_r,\, t\in[0,T],\, G(\mathbbm{P}^*)-\text{a.s}.
	$$
with orthogonal martingale $M$. As $K$ is  $G(\mathbbm{P}^*)$-a.s. non-decreasing and positive, we have by the standard comparison result of BSDEs that $Y_0 \ge \mathbbm{E}^{G(\mathbbm{P}^*)}[Y^{G(\mathbbm{P}^*)}_0]$, where $(Y^{\mathbbm{P}^*},Z^{\mathbbm{P}^*})$ is defined as in \eqref{BSDE-P} by
	$$
	Y^{\mathbbm{P}^*}_t 
	= 
	\xi + \int_t^T F_r(Z^{\mathbbm{P}^*}_r,\hat{\sigma}_r^2,m)dr 
	                     - Z^{\mathbbm{P}^*}_rdX_r 
	                     - dM^{\mathbbm{P}^*}_r,\quad t\in[0,T],\quad \mathbbm{P^*}-\text{a.s.}
	$$
Moreover, the requirement that $U$ is an $m-$martingale is equivalent to $K\equiv 0$, $G(\mathbbm{P}^*)-$a.s. which is in turn equivalent to $Y_0 = \mathbbm{E}^{G(\mathbbm{P}^*)}[Y^{G(\mathbbm{P}^*)}_0]$, which we prove. As $m$ satisfies \eqref{E1S5}, it follows from \eqref{EqGirs} and Proposition \ref{PGirs2} that 
	\begin{equation}\label{E2S5}
	Y_0 
	\;=\; 
	V^{m}_0(0)
	\;=\;
	\mathbbm{E}^{\mathbbm{P}^*}\left[\xi+\int_0^Tf^Q_rdr\right]
	\;\le\; 
	\mathbbm{E}^{G(\mathbbm{P}^*)}[Y^{G(\mathbbm{P}^*)}_0],
	\end{equation} 
	and the required result follows from the fact that
	$$
	Y_0 = V^{m}_0(0)=\underset{\mathbbm{P}\in\overline{\mathcal{P}}^{m}}{\text{max} }\mathbbm{E}^{\mathbbm{P}}\left[\xi+\int_0^Tf^Q_rdr\right]	= \underset{\mathbbm{P}\in \mathcal{M}^{m}}{\text{sup}}\mathbbm{E}^{\mathbbm{P}}[Y^{\mathbbm{P}}_0]\geq \mathbbm{E}^{G(\mathbbm{P}^*)}[Y^{G(\mathbbm{P}^*)}_0].
	$$ 
\ep

\begin{appendix}
	
\section{Basic results concerning correspondences}

\begin{definition}
	Let $E,F$ be two Hausdorff topological spaces. A mapping $T$ from $E$ into the subsets of $F$ is called a correspondence from $E$ into $F$, which we summarize with the notation $T:E\xtwoheadrightarrow{  }F$.
	
	$T$ is called upper hemicontinuous (in short uhc) if for every $x\in E$ and  any neighborhood $U$ of $T(x)$,
	there is a neighborhood $V$ of $x$ such that $z \in V$ implies $T(z) \subset U$.
	
	$T$ is lower hemicontinuous (in short lhc) if for every $x\in E$ and any open set $U$ that meets $T(x)$  there is a neighborhood $V$ of $x$ such that $z \in V$ implies $T(z) \cap U \neq \emptyset$.
	
	We say that $T$ is continuous if it is both uhc and lhc. Finally, $T$ is said to have closed graph, if its graph $Gr(T):=\{(x,y):x\in E, y\in T(x)\}$ is a closed subset of $E\times F$.
\end{definition}

We collect in the following Proposition some classical results which can be found in \cite{aliprantis} see Theorems 17.10, 17.11, 17.15, 17.23 and Lemma 17.8.

\begin{proposition}\label{alipran}
	\begin{enumerate}\
		\item If $T$ is an uhc correspondence with compact values, then it has closed graph;
		\item conversely, if $T$ has closed graph and $F$ is compact, then $T$ is uhc;
		\item if $F$ is a metric space and $T$ is compact valued, then $T$ may be seen as a function from $E$ to $\rm{Comp}(F)$ the set of non-empty compact sets of $F$, which may be equipped with a metric called the Hausdorff metric such that $T$ is continuous as a correspondence iff it is continuous as a function for that metric;
		\item the composition of uhc (resp. lhc, continuous) correspondences is uhc (resp. lhc, continuous);
		\item the image of a compact set under
		a compact-valued uhc correspondence is compact.
	\end{enumerate}
\end{proposition}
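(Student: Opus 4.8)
The plan is to establish the five items separately, each by a routine topological argument; indeed all five are classical and may be found in \cite{aliprantis}, so the proof essentially amounts to reproducing these arguments. Throughout I would work with nets whenever the underlying spaces are not assumed metrizable, reserving sequential arguments for the metric case of Item 3.

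For Item 1, I would argue by contradiction on the graph. Given a net $(x_\alpha,y_\alpha)$ in $Gr(T)$ converging to $(x,y)$, suppose $y\notin T(x)$. Since $T(x)$ is compact and $F$ is Hausdorff, one can separate $y$ from $T(x)$ by disjoint open sets $W\ni y$ and $U\supset T(x)$. Upper hemicontinuity then forces $T(x_\alpha)\subset U$ eventually, so $y_\alpha\in U$, contradicting $y_\alpha\to y\in W$ since $W\cap U=\emptyset$. Item 2 is the converse under the additional assumption that $F$ is compact: if $T$ failed to be uhc at some $x$, one extracts a net $x_\alpha\to x$ and points $y_\alpha\in T(x_\alpha)$ lying outside a fixed open $U\supset T(x)$; compactness of $F$ yields a convergent subnet $y_\beta\to y\in F\setminus U$, and the closed graph property gives $y\in T(x)\subset U$, a contradiction.

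For Items 4 and 5 I would unwind the definitions directly. For the composition $T\circ S$, to prove upper hemicontinuity I would start from an open set $W\supset (T\circ S)(x)$, cover $S(x)$ by the uhc-neighborhoods of $T$ associated with $W$, and then pull the resulting open neighborhood of $S(x)$ back through the uhc of $S$; the lower hemicontinuous case is dual, selecting a single point of $S(x)$ whose image meets the target open set and chaining the lhc neighborhoods, and continuity follows by combining the two. For the image of a compact set $K$, given an open cover of $T(K)$ I would first extract, for each $x\in K$, a finite subcover of the compact set $T(x)$, enlarge its union to an open $U_x\supset T(x)$, pull back to a neighborhood $V_x$ of $x$ by uhc, and finally use compactness of $K$ to reduce to finitely many $V_{x_1},\dots,V_{x_m}$; the union of the associated finite subcovers is then a finite subcover of $T(K)$.

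The main obstacle is Item 3, the identification of correspondence-continuity with continuity into $(\mathrm{Comp}(F),d_H)$ for the Hausdorff metric $d_H$. The key is the metric reformulation of the two one-sided notions: upper hemicontinuity at $x$ is equivalent to $\sup_{y\in T(z)}\mathrm{dist}(y,T(x))\to 0$ as $z\to x$, while lower hemicontinuity is equivalent to $\sup_{y\in T(x)}\mathrm{dist}(y,T(z))\to 0$. Since $d_H(T(z),T(x))$ is precisely the maximum of these two excesses, continuity of $T$ as a correspondence is equivalent to $d_H(T(z),T(x))\to 0$, that is, to continuity as a $\mathrm{Comp}(F)$-valued map. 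Verifying the two equivalences carefully — in particular that the $\varepsilon$-neighborhood formulation of uhc and lhc matches the one-sided Hausdorff excesses — is the only genuinely delicate point, and it relies essentially on metrizability, which is why Item 3 is stated only for metric $F$.
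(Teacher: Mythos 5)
Your proofs are correct. There is no in-paper argument to compare against: the paper states this proposition as a collection of classical facts and simply points to Theorems 17.10, 17.11, 17.15, 17.23 and Lemma 17.8 of \cite{aliprantis}, and your net-based arguments are precisely the standard textbook proofs of those results. Two points are worth making explicit if you write this up in full. First, in Item 1 the separation of $y$ from the compact set $T(x)$ by disjoint open sets uses that $F$ is Hausdorff; this is legitimate here because the paper's definition of a correspondence takes $E,F$ Hausdorff, but it should be flagged, since uhc compact-valued correspondences into non-Hausdorff spaces can fail to have closed graph. Second, in Item 3 the delicate direction rests on compact-valuedness at least as much as on metrizability: to pass from the excess formulation $\sup_{y\in T(z)}\mathrm{dist}\bigl(y,T(x)\bigr)\to 0$ back to neighborhood-uhc you need that every open $U\supset T(x)$ contains an $\varepsilon$-neighborhood of $T(x)$, which holds because $\mathrm{dist}\bigl(T(x),F\setminus U\bigr)>0$ when $T(x)$ is compact (and is false for general closed values); and to show that lhc forces $\sup_{y\in T(x)}\mathrm{dist}\bigl(y,T(z)\bigr)\to 0$ you cover the compact $T(x)$ by finitely many $\varepsilon/2$-balls centered in $T(x)$ and intersect the finitely many lhc neighborhoods, which uses total boundedness of $T(x)$. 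With these spelled out, your chain ``continuous as a correspondence iff both one-sided excesses vanish iff $d_H$-continuous into $\mathrm{Comp}(F)$'' closes exactly as you describe, and Items 2, 4 and 5 are complete as you give them (note that Item 4 needs no compactness hypothesis at all, which your argument correctly avoids).
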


We now recall the Berge maximum theorem (see Theorem 17.31 in \cite{aliprantis}).
\begin{theorem}\label{Berge}
	Let $T:E\xtwoheadrightarrow{  } F$ be a continuous nonempty compact valued correspondence between topological spaces. Let $J:F\longrightarrow \mathbbm{R}$ be a continuous function, then the correspondence $T^*:E\xtwoheadrightarrow{  } F$ defined for all $x\in E$ by $$T^*(x):=\underset{y\in T(x)}{\text{\rm{Argmax} }} J(y),$$ is uhc and nonempty compact valued.
	
	Moreover, the mapping $m:E\rightarrow \mathbbm{R}$ given for all $x\in E$ by 
	$$m(x):=\underset{y\in T(x)}{\text{\rm{max} }} J(y),$$ is continuous.
\end{theorem}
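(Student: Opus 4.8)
The plan is to prove the Berge Maximum Theorem by separately establishing that $T^*$ is nonempty compact-valued, that the value function $m$ is continuous, and that $T^*$ is upper hemicontinuous. I would first dispose of the pointwise properties: for fixed $x\in E$, the set $T(x)$ is nonempty and compact, and $J$ restricted to $T(x)$ is continuous, so by the classical Weierstrass extreme value theorem the supremum is attained; hence $T^*(x)=\operatorname{Argmax}_{y\in T(x)}J(y)$ is nonempty. Moreover $T^*(x)=\{y\in T(x):J(y)=m(x)\}$ is the intersection of the compact set $T(x)$ with the closed set $J^{-1}(\{m(x)\})$, so it is compact. This takes care of the "nonempty compact valued" assertion.

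The next step is the continuity of $m$. I would prove upper and lower semicontinuity separately, using upper and lower hemicontinuity of $T$ respectively. For \emph{upper} semicontinuity of $m$, I would fix $x$ and $\varepsilon>0$; since $T^*(x)$ is compact and $J$ is continuous, the open set $U:=J^{-1}\big((-\infty,m(x)+\varepsilon)\big)$ is a neighborhood of $T(x)$ (because every $y\in T(x)$ satisfies $J(y)\le m(x)<m(x)+\varepsilon$), so by upper hemicontinuity there is a neighborhood $V$ of $x$ with $T(z)\subset U$ for $z\in V$, whence $m(z)<m(x)+\varepsilon$. For \emph{lower} semicontinuity of $m$, I would pick $y^*\in T^*(x)$ achieving $J(y^*)=m(x)$ and set $U:=J^{-1}\big((m(x)-\varepsilon,\infty)\big)$, an open set meeting $T(x)$ at $y^*$; by lower hemicontinuity there is a neighborhood $V$ of $x$ such that $T(z)\cap U\neq\emptyset$ for $z\in V$, which forces $m(z)>m(x)-\varepsilon$. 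Combining the two gives continuity of $m$.

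With $m$ continuous in hand, I would establish upper hemicontinuity of $T^*$. The cleanest route is via the closed graph characterization, invoking Proposition \ref{alipran} Item 2: since (after restriction to a suitable context, or assuming $F$ compact as in that item) a closed-graph correspondence into a compact space is uhc. I would verify that $T^*$ has closed graph directly: suppose $(x_\alpha,y_\alpha)\to(x,y)$ with $y_\alpha\in T^*(x_\alpha)$. Since $T$ itself has closed graph (it is continuous, hence uhc with compact values, so Item 1 applies) we get $y\in T(x)$. Furthermore $J(y_\alpha)=m(x_\alpha)$, and passing to the limit using continuity of both $J$ and $m$ yields $J(y)=m(x)$, so $y\in T^*(x)$. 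Thus the graph of $T^*$ is closed, and uhc follows from Proposition \ref{alipran} Item 2.

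The main obstacle is the last step: Proposition \ref{alipran} Item 2 converts closed graph into uhc only when the target space $F$ is compact, whereas the statement allows general $F$. The standard remedy, which I would use, is to localize: fix $x$ and a neighborhood $U$ of $T^*(x)$, and note that since $T$ is uhc and compact-valued, one can shrink to a neighborhood $V_0$ of $x$ on which $\overline{T(V_0)}$ is contained in a compact set $F'\subset F$ (the image of a compact neighborhood under a compact-valued uhc correspondence being compact, Item 5). Restricting all the correspondences to the compact range $F'$ then legitimizes the closed-graph-implies-uhc argument, and the resulting neighborhood refinement transfers back to give the desired uhc property of $T^*$ at $x$. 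Care is needed to phrase this localization cleanly so that the neighborhood produced genuinely satisfies $T^*(z)\subset U$; this is the only place where topological bookkeeping must be handled with attention rather than routine invocation of the earlier lemmas.
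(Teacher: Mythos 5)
The paper does not actually prove Theorem \ref{Berge}; it is quoted from Theorem 17.31 in \cite{aliprantis}, so your attempt must be judged against the standard proof. Your first two steps are correct: Weierstrass gives nonemptiness, $T^*(x)=T(x)\cap J^{-1}(\{m(x)\})$ is a closed subset of a compact set hence compact, and your two-sided semicontinuity argument for $m$ (uhc of $T$ for the upper bound, lhc for the lower bound) is exactly the classical one. Your verification that $T^*$ has closed graph is also correct. The genuine gap is the last step, and it is not mere ``topological bookkeeping'': your localization asserts a neighborhood $V_0$ of $x$ with $\overline{T(V_0)}$ inside a compact $F'\subset F$, justified by applying Item 5 of Proposition \ref{alipran} to ``a compact neighborhood'' of $x$. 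But $E$ is an arbitrary topological space and is not assumed locally compact, so no compact neighborhood need exist; and uhc of $T$ only traps $T(V_0)$ inside an arbitrary \emph{open} neighborhood of the compact set $T(x)$, which in general is not relatively compact. This failure is not exotic for this paper: in every application here ($\mathcal{P}$, $\mathcal{P}^*$, $\overline{\mathcal{P}}_{s,x}$), $E$ and $F$ are sets of probability measures on path spaces, which are nowhere locally compact, so your argument would not cover the very uses the paper makes of the theorem. And closed graph genuinely does not substitute for uhc without compactness of the range: the correspondence $S$ from $[0,1]$ to $\mathbbm{R}$ with $S(x)=\{1/x\}$ for $x>0$ and $S(0)=\{0\}$ has closed graph but is not uhc at $0$.

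The standard repair needs only compactness of the \emph{value} $T(x)$, not of any neighborhood, and bypasses the closed graph entirely. Let $U$ be open with $T^*(x)\subset U$. If $T(x)\subset U$, uhc of $T$ finishes immediately. Otherwise $C:=T(x)\setminus U$ is nonempty compact and $J<m(x)$ on $C$ (any $y\in T(x)$ with $J(y)=m(x)$ lies in $T^*(x)\subset U$), so $\delta:=\max_{C}J<m(x)$. Set $\varepsilon:=\frac{1}{2}(m(x)-\delta)$ and $W:=J^{-1}\big((-\infty,m(x)-\varepsilon)\big)$; then $T(x)\subset U\cup W$. By uhc of $T$ pick a neighborhood $V_1$ of $x$ with $T(z)\subset U\cup W$ for $z\in V_1$, and by the continuity of $m$ (which you have already proved at this point) pick $V_2$ with $m(z)>m(x)-\varepsilon$ on $V_2$. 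For $z\in V_1\cap V_2$ and $y\in T^*(z)$, $J(y)=m(z)>m(x)-\varepsilon$ forces $y\notin W$, hence $y\in U$, i.e. $T^*(z)\subset U$. Alternatively, if you want to salvage your closed-graph route, write $T^*(x)=T(x)\cap G(x)$ with $G(x):=\{y\in F: J(y)\ge m(x)\}$, whose graph is closed by continuity of $J$ and $m$, and invoke the intersection lemma (Theorem 17.25 in \cite{aliprantis}: a closed-graph correspondence intersected with an uhc compact-valued correspondence is uhc); its proof is precisely the finite-subcover trick on the compact value $T(x)$ that replaces your missing compact neighborhood.
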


In \cite{horvath}, Horvath extended the $\epsilon$-approximate selection Theorem obtained by Cellina in \cite{cellina}. Although it was stated in a framework of generalized convex structures, the Theorem 6 of \cite{horvath} and the lines after its proof imply the following.
\begin{assumption}\label{HypE}
	$E$ is a subset of a locally convex topological vector space, such that there exists a distance $d_E$ metrizing the induced topology of $E$ and such that all open balls are convex, and that any neighborhood $\{y\in E:d_E(y,C)<r\}$ of a convex set $C$ is convex.
\end{assumption}

\begin{theorem}\label{Cellina}
	Let $(K,d_K)$ be a compact metric space and $(E,d_E)$ verifying Assumption \ref{HypE}.  We denote by $d$ the distance $d_K+d_E$ on $K\times E$.

	Let $T$ be an uhc correspondence taking nonempty compact convex values from $K$ to $E$, then for any $\epsilon>0$, there exists a continuous function $f_{\epsilon}:K\longrightarrow E$ such that for all $x\in K$,
	$$d((x,f_{\epsilon}(x)),Gr(T)):=\inf\{d((x,f_{\epsilon}(x)),(y,z):y\in E,z\in T(y)\}<\epsilon.$$
\end{theorem}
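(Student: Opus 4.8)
The plan is to reproduce the classical Cellina approximate-selection argument, built on a continuous partition of unity subordinate to a finite open cover of the compact space $K$, with Assumption \ref{HypE} invoked precisely at the step where a convex combination of selected points must be shown both to remain in $E$ and to stay close to a single value set $T(x_{i_0})$. For a convex set $C\subset E$ and $r>0$ write $N_r(C):=\{y\in E:d_E(y,C)<r\}$; by Assumption \ref{HypE} this set is convex and contained in $E$.

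First I would exploit upper hemicontinuity to localise. For each $x\in K$, applying uhc to the neighbourhood $N_{\epsilon/2}(T(x))$ of the compact set $T(x)$ yields an open ball $B(x,s_x)$ with $T(B(x,s_x))\subset N_{\epsilon/2}(T(x))$; setting $\rho(x):=\min\{s_x/2,\epsilon/4\}$ then ensures both $\rho(x)\le\epsilon/2$ and $T(B(x,2\rho(x)))\subset N_{\epsilon/2}(T(x))$. The balls $\{B(x,\rho(x))\}_{x\in K}$ cover $K$, so by compactness I extract a finite subcover $B(x_1,\rho_1),\dots,B(x_n,\rho_n)$ with $\rho_i:=\rho(x_i)$. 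Since the compact metric space $K$ is normal, there is a continuous partition of unity $\{p_i\}_{i\le n}$ subordinate to this cover (e.g. $p_i=g_i/\sum_j g_j$ with $g_i(x)=\max\{0,\rho_i-d_K(x,x_i)\}$), satisfying $p_i\ge0$, $\sum_i p_i\equiv1$ and $\{p_i>0\}\subset B(x_i,\rho_i)$. Choosing $z_i\in T(x_i)$ (nonempty values) I define $f_\epsilon(x):=\sum_{i\le n}p_i(x)z_i$, a continuous map into the ambient locally convex space.

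The verification fixes $x\in K$ and sets $I:=\{i:p_i(x)>0\}$, which is nonempty. Pick $i_0\in I$ with $\rho_{i_0}$ maximal. For every $i\in I$ one has $d_K(x,x_i)<\rho_i$, whence $d_K(x_i,x_{i_0})<\rho_i+\rho_{i_0}\le2\rho_{i_0}$, so $x_i\in B(x_{i_0},2\rho_{i_0})$ and therefore $z_i\in T(x_i)\subset N_{\epsilon/2}(T(x_{i_0}))$. Because $T(x_{i_0})$ is convex, Assumption \ref{HypE} makes $N_{\epsilon/2}(T(x_{i_0}))$ a convex subset of $E$; as $f_\epsilon(x)$ is a convex combination of the $z_i$, $i\in I$, it lies in this set, which in particular shows $f_\epsilon(x)\in E$. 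Compactness of $T(x_{i_0})$ then gives $w\in T(x_{i_0})$ with $d_E(f_\epsilon(x),w)<\epsilon/2$, while $d_K(x,x_{i_0})<\rho_{i_0}\le\epsilon/2$; hence $(x_{i_0},w)\in Gr(T)$ and
$$d\big((x,f_\epsilon(x)),(x_{i_0},w)\big)=d_K(x,x_{i_0})+d_E(f_\epsilon(x),w)<\epsilon,$$
so that $d((x,f_\epsilon(x)),Gr(T))<\epsilon$, as required.

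The main obstacle is exactly this last step: guaranteeing that the single-valued selection takes values in $E$ rather than merely in the surrounding vector space, and that a convex combination of points drawn from distinct fibres $T(x_i)$ stays $\epsilon/2$-close to one fibre. Both difficulties are resolved by the geometric content of Assumption \ref{HypE}: the doubling device built into $\rho$ together with the maximal-radius index $i_0$ funnels all active $z_i$ into the single neighbourhood $N_{\epsilon/2}(T(x_{i_0}))$, and the convexity of such a neighbourhood (a consequence of the convexity of $d_E$-balls and of neighbourhoods of convex sets in $E$) keeps the convex combination inside $E$. Everything else — existence of the partition of unity, continuity of $f_\epsilon$ into $E$ for the subspace topology metrized by $d_E$, and attainment of the fibre distance — is routine given the compactness of $K$ and of the values of $T$.
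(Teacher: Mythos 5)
Your proof is correct. Note, however, that the paper does not actually prove Theorem \ref{Cellina}: it imports it by citation, observing that Theorem 6 of Horvath \cite{horvath} (an extension of Cellina's approximate selection theorem \cite{cellina} to generalized convex structures) and the remarks following its proof imply the statement. What you have written is a self-contained reconstruction of the classical Cellina partition-of-unity argument specialised to Assumption \ref{HypE}, and it is sound at every step: the doubling device $\rho(x):=\min\{s_x/2,\epsilon/4\}$ guaranteeing $T(B(x,2\rho(x)))\subset N_{\epsilon/2}(T(x))$, the choice of the maximal-radius index $i_0$ forcing every active centre $x_i$ into $B(x_{i_0},2\rho_{i_0})$, and the convexity of the neighbourhood $N_{\epsilon/2}(T(x_{i_0}))$ — which is exactly the geometric axiom that Horvath's framework abstracts and that Assumption \ref{HypE} encodes — are precisely the ingredients the cited result packages. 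Your route buys an elementary, verifiable proof in place of a pointer to a more general theory; the paper's route buys brevity and the generality of Horvath's convex structures, which is not needed at the level of generality of Assumption \ref{HypE}. Two cosmetic remarks: compactness of $T(x_{i_0})$ is not needed to produce the witness $w$ (an infimum strictly below $\epsilon/2$ already yields some $w\in T(x_{i_0})$ with $d_E(f_\epsilon(x),w)<\epsilon/2$), and your stated bound $\rho_{i_0}\le\epsilon/2$ is in fact $\rho_{i_0}\le\epsilon/4$ by your own definition of $\rho$ — harmless slack in either case.
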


The following theorem is a generalization of Kakutani's Theorem adapted from Proposition 7.4 in \cite{LackerPathDep} which itself adapts a result of Cellina, see Theorem 1 in \cite{cellina}.

	\begin{theorem}\label{Kakutani}
		Let $(K,d)$ be a compact convex subset of a locally convex topological vector space, $(E,d_E)$ verifying Assumption \ref{HypE}, $T$ be an uhc correspondence taking nonempty compact convex values from $K$ to $E$ and $\phi$ be a continuous function from $E$ to $K$.
		
		Then there exists some $x\in K$ such that $x\in\phi\circ T(x)$.
	\end{theorem}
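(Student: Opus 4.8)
The plan is to combine the $\epsilon$-approximate selection Theorem \ref{Cellina} with a classical fixed point theorem for continuous self-maps, followed by a compactness argument to pass to the limit. First, for each integer $n\geq 1$ I would apply Theorem \ref{Cellina} with $\epsilon = 1/n$ to the uhc, nonempty compact convex valued correspondence $T$ from $K$ to $E$, obtaining a continuous function $f_n : K \to E$ such that $d\big((x, f_n(x)), Gr(T)\big) < 1/n$ for all $x \in K$. Composing with the continuous map $\phi$ yields a continuous self-map $\phi \circ f_n : K \to K$ of the compact convex set $K$. By the Schauder--Tychonoff fixed point theorem (applicable since $K$ is a compact convex subset of a locally convex topological vector space; in finite dimension this is just Brouwer's theorem), there exists $x_n \in K$ with $x_n = \phi(f_n(x_n))$.

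Next I would set up the extraction of limits. By the defining property of the approximate selection, for each $n$ there is a pair $(y_n, z_n) \in Gr(T)$, i.e. $z_n \in T(y_n)$, with $d_K(x_n, y_n) + d_E(f_n(x_n), z_n) < 1/n$. Since $K$ is a compact metric space, I may extract a subsequence along which $x_n \to x^*$ for some $x^* \in K$, and then $y_n \to x^*$ as well. For the target variable the crucial observation is that $z_n \in T(K)$, and $T(K)$ is compact by Proposition \ref{alipran} Item 5 (the image of the compact set $K$ under the compact valued uhc correspondence $T$). Hence, along a further subsequence, $z_n \to z^*$ for some $z^* \in E$, and consequently $f_n(x_n) \to z^*$ too, since $d_E(f_n(x_n), z_n) \to 0$.

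Finally I would identify the fixed point. As $T$ is uhc with compact values, it has closed graph by Proposition \ref{alipran} Item 1; combined with $y_n \to x^*$, $z_n \to z^*$ and $z_n \in T(y_n)$, this gives $z^* \in T(x^*)$. Continuity of $\phi$ together with $f_n(x_n) \to z^*$ yields $\phi(f_n(x_n)) \to \phi(z^*)$, while $x_n = \phi(f_n(x_n)) \to x^*$; by uniqueness of limits, $x^* = \phi(z^*)$. Therefore $x^* = \phi(z^*) \in \phi\big(T(x^*)\big) = \phi \circ T(x^*)$, which is exactly the desired conclusion.

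The main obstacle is ensuring that the target values $z_n$ remain in a fixed compact set, so that the limit $z^*$ exists: because $E$ is not assumed compact, this step relies crucially on the compactness of $T(K)$ guaranteed by the compact valued uhc structure of $T$. Once this is secured, the closed graph property and the continuity of $\phi$ make the passage to the limit routine, and the only nontrivial external input beyond the approximate selection theorem is the Schauder--Tychonoff fixed point theorem for the continuous self-maps $\phi \circ f_n$.
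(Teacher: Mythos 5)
Your proposal is correct and follows essentially the same route as the paper: approximate selections from Theorem \ref{Cellina}, Schauder--Tychonoff fixed points of $\phi\circ f_n$, compactness of $T(K)$ and the closed graph property from Proposition \ref{alipran} to pass to the limit. The only cosmetic difference is that the paper extracts the limit pair directly from the compact set $Gr(T)\subset K\times T(K)$, whereas you extract limits of $y_n$ and $z_n$ separately and then invoke closedness of the graph -- the two arguments are interchangeable.
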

	
	\begin{proof}
		Let $Gr(T) := \{(x, y) \in K \times E : y \in T(x)\}$. By previous Theorem \ref{Cellina}, for every $n\in\mathbbm{N}$, there exists a continuous $f_n:K\longrightarrow E$ such that for all $x\in K$,
		$$\inf\{d((x,f_{n}(x)),Gr(T)\}<\frac{1}{n}.$$
		Since $\phi\circ f_n:K\longrightarrow K$ is continuous, there exists by Schauder’s fixed point theorem some $x_n \in K$ such
		that $x_n = \phi(f_n(x_n))$. By Proposition \ref{alipran} Items 1 and 5, since $T$ is uhc and compact valued then  $T(K) :=\underset{x\in K}{\bigcup}T(x)$ is compact and  $Gr(T)$ is closed. Thus $Gr(T) \subset K \times T(K)$ is
		compact. Since $d((x_n , f_n(x_n)), Gr(T)) \longrightarrow 0$ and $Gr(T)$ is compact, there exists a
		subsequence $x_{n_k}$ and a point $(x, y) \in Gr(T)$ such that  $(x_{n_k},f_{n_k}(x_{n_k}))\longrightarrow (x,y)$.
		Now by continuity of $\phi$ we have
		$$x = \text{lim }x_{n_k} = \text{lim } \phi(f_{n_k}(x_{n_k})) = \phi(y),$$ with $y\in T(x)$ so the proof is complete.
	\end{proof}

\begin{lemma}\label{Lkakutani}
	Let $S$ be a polish space and $E$ be a  convex subset of $\mathfrak{M}_+^1(S)$, equipped with the topology of weak convergence, then there exists on $E$ a distance $d_E$ such that $(E,d_E)$ verifies Assumption \ref{HypE}.
	
	In particular, Theorem \ref{Kakutani} applies for such a choice of space $E$.
\end{lemma}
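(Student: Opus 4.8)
The plan is to exhibit a translation-invariant metric on $E$ induced by a seminorm built from a countable convergence-determining family, and to read off the two convexity requirements of Assumption \ref{HypE} directly from the triangle inequality. First I fix the ambient space: the space $\mathfrak{M}(S)$ of finite signed measures on $S$, endowed with the topology of weak convergence (the coarsest topology making $\eta\mapsto\int_S g\,d\eta$ continuous for every $g\in\mathcal{C}_b(S)$), is a locally convex topological vector space, and $E\subset\mathfrak{M}_+^1(S)\subset\mathfrak{M}(S)$ inherits exactly the topology of weak convergence. Thus Assumption \ref{HypE} will hold as soon as I produce a metric $d_E$ on $E$ metrizing this induced topology, with convex open balls and convex neighborhoods of convex sets.

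Next I would construct the metric. Since $S$ is Polish, the weak topology on $\mathfrak{M}_+^1(S)$ is metrizable and admits a countable convergence-determining family $(f_k)_{k\ge 1}\subset\mathcal{C}_b(S)$, which I normalize so that $\|f_k\|_\infty\le 1$: a sequence $(\mu_n)$ converges weakly to $\mu$ iff $\int f_k\,d\mu_n\to\int f_k\,d\mu$ for every $k$. I then set
$$
d_E(\mu,\nu):=\sum_{k\ge 1}2^{-k}\Big|\int_S f_k\,d\mu-\int_S f_k\,d\nu\Big|=\|\mu-\nu\|,
\quad
\|\eta\|:=\sum_{k\ge 1}2^{-k}\Big|\int_S f_k\,d\eta\Big|.
$$
The functional $\|\cdot\|$ is a seminorm on $\mathfrak{M}(S)$, being subadditive and absolutely homogeneous (each $\eta\mapsto\int f_k\,d\eta$ is linear), and it separates points of $\mathfrak{M}_+^1(S)$ because $(f_k)$ is convergence-determining; hence $d_E$ is a genuine metric on $E$. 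It metrizes the induced weak topology: the latter and the $d_E$-topology are both metrizable and, by the convergence-determining property, have the same convergent sequences, so they coincide.

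Finally I would verify the two convexity conditions, which follow formally from the seminorm structure together with the convexity of $E$. For an open ball $B(c,r)=\{y\in E:\|y-c\|<r\}$ and $y_1,y_2\in B(c,r)$, $\lambda\in[0,1]$, convexity of $E$ gives $\lambda y_1+(1-\lambda)y_2\in E$, while subadditivity and homogeneity yield $\|\lambda y_1+(1-\lambda)y_2-c\|\le\lambda\|y_1-c\|+(1-\lambda)\|y_2-c\|<r$, so balls are convex. For a convex $C\subset E$ and the neighborhood $\{y\in E:d_E(y,C)<r\}$, given $y_1,y_2$ in it I choose $c_1,c_2\in C$ with $\|y_i-c_i\|<r$; then $c:=\lambda c_1+(1-\lambda)c_2\in C$ and the same estimate gives $\|(\lambda y_1+(1-\lambda)y_2)-c\|<r$, whence $d_E(\lambda y_1+(1-\lambda)y_2,C)<r$. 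This establishes Assumption \ref{HypE}, and the ``in particular'' clause is immediate since Theorem \ref{Kakutani} only requires $(E,d_E)$ to satisfy that assumption.

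The only delicate point I anticipate is the metrization claim, namely exhibiting the countable convergence-determining family and checking that $d_E$ induces precisely the weak topology rather than a strictly coarser one; the two convexity properties are essentially automatic for any translation-invariant metric arising from a seminorm.
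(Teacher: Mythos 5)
Your proof is correct, and it rests on the same structural observation as the paper's: a distance induced by a (semi)norm on the ambient space $\mathfrak{M}(S)$, restricted to the convex set $E$, automatically has convex open balls and convex neighborhoods of convex sets, so the whole lemma reduces to producing such a functional that metrizes the weak topology on $E$. Where you genuinely diverge is in that metrization step. The paper equips $\mathfrak{M}(S)$ with the Kantorovich--Rubinstein norm and invokes Theorem 8.3.2 of \cite{bogachev2} to conclude that this norm induces the weak topology on $\mathfrak{M}_+^1(S)$, treating the convexity requirements of Assumption \ref{HypE} as immediate for any norm-induced distance on a convex subset of a normed space. You instead build a bespoke seminorm $\|\eta\|:=\sum_{k\ge1}2^{-k}\bigl|\int_S f_k\,d\eta\bigr|$ from a countable convergence-determining family and prove metrization by hand, via the fact that two metrizable topologies with the same convergent sequences coincide. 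Your route is more elementary and self-contained in the verification (the convexity estimates are written out, and no optimal-transport-type norm is needed), but it shifts the load onto two classical facts that you assert rather than prove: the existence of a countable convergence-determining family $(f_k)\subset\mathcal{C}_b(S)$ for Polish $S$ (Parthasarathy, \emph{Probability Measures on Metric Spaces}, Chapter II, Theorem 6.6) and the metrizability of the weak topology on $\mathfrak{M}_+^1(S)$ (e.g.\ the Prokhorov metric); these play exactly the role that the citation of \cite{bogachev2} plays in the paper and should likewise be cited. Two subtleties you handle correctly and are worth keeping explicit: your $\|\cdot\|$ is only a seminorm on all of $\mathfrak{M}(S)$, but separation on $E$ does follow from the convergence-determining property applied to constant sequences; and the ambient space $\mathfrak{M}(S)$ with the topology $\sigma(\mathfrak{M}(S),\mathcal{C}_b(S))$ is indeed a Hausdorff locally convex space, since bounded continuous functions separate finite Borel measures on a metric space.
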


\begin{proof}
	It is immediate that in a normed space, the distance induced by the norm satisfies  Assumption \ref{HypE}. This implies that if we consider a convex subset $E$ of a normed space $(F,\|\cdot\|)$ and equip $E$  with the distance $d_E$ defined by $d_E(x,y):=\|x-y\|$ then $(E,d_E)$ verifies Assumption \ref{HypE}.
	
	We now recall that $\mathfrak{M}_+^1(S)$ is a convex subset of the vector space $\mathfrak{M}(S)$ which can be equipped with the Kantorovic-Rubinshtein norm (see Section 8.3 in \cite{bogachev2} for an introduction) and that on $\mathfrak{M}_+^1(S)$, that norm induces the topology of weak convergence, see Theorem 8.3.2 in \cite{bogachev2}. This concludes the proof. 
\end{proof}

\end{appendix}

\bibliographystyle{plain}
\bibliography{../../../biblioPhDBarrasso}

\end{document}